\numberwithin{equation}{section}
\newcommand{\ds}{\displaystyle}
\def\nm{\noalign{\medskip}}
\newtheorem{thm}{Theorem}[section]
\newtheorem{rmk}{Remark}[section]
\newtheorem{definition}{Definition}[section]
\newtheorem{lem}{Lemma}[section]
\newtheorem{prop}{Propsition}[section]
\newtheorem{asump}{Assumption}[section]
 \def\p{\partial}
\def \Vh0{\stackrel{\circ}{V}_h} 
\def\Om{\Omega}  \def\om{\omega}
\def\l{\label}  \def\f{\frac}  
\def\e{\epsilon}
\def\l|{\left|}
\def\r|{\right|}
\newcommand{\R}{\mathbb{R}}
\newcommand{\lc}
{\mathrel{\raise2pt\hbox{${\mathop<\limits_{\raise1pt\hbox
{\mbox{$\sim$}}}}$}}}
\newcommand{\gc}
{\mathrel{\raise2pt\hbox{${\mathop>\limits_{\raise1pt\hbox{\mbox{$\sim$}}}}$}}}
\newcommand{\ec}
{\mathrel{\raise2pt\hbox{${\mathop=\limits_{\raise1pt\hbox{\mbox{$\sim$}}}}$}}}
\def\be{\begin{equation}} \def\ee{\end{equation}}
\def\bea{\begin{eqnarray}}  \def\eea{\end{eqnarray}}
\def\beas{\begin{eqnarray*}} \def\eeas{\end{eqnarray*}}
\def\bn{\begin{enumerate}} \def\en{\end{enumerate}}
\def\bd{\begin{description}} \def\ed{\end{description}}
\title{ A mathematical theory of super-resolution by using a system of sub-wavelength Helmholtz resonators\thanks{\footnotesize This work was supported  by the
ERC Advanced Grant Project MULTIMOD--267184.}}
\author{
Habib Ammari\thanks{\footnotesize Department of Mathematics and Applications,
Ecole Normale Sup\'erieure, 45 Rue d'Ulm, 75005 Paris, France
(habib.ammari@ens.fr, hai.zhang@ens.fr).} \and   Hai Zhang\footnotemark[2]
}
\begin{document}
\maketitle

\begin{abstract}
A rigorous mathematical theory is developed to explain the super-resolution phenomenon observed in the experiment \cite{LFL2011}. A key ingredient is the calculation of the resonances and the Green function in the half space with the presence of a system of Helmholtz resonators in the quasi-stationary regime. By using boundary integral equations and generalized Rouche's theorem, the existence and the leading asymptotic of the resonances are rigorously derived. The integral equation formulation also yields the leading order terms in the asymptotics of the Green function. The methodology developed in the paper provides an elegant and systematic way for calculating resonant frequencies for Helmholtz resonators in assorted space settings as well as in various frequency regimes. By using the asymptotics of the Green function, the analysis of the imaging functional of the time-reversal wave fields becomes possible, which clearly demonstrates the super-resolution property. The result provides the first mathematical theory of super-resolution in the context of a deterministic medium and sheds light to the  mechanism of super-resolution and super-focusing for waves in deterministic complex media.
\end{abstract}

\medskip

\bigskip

\noindent {\footnotesize Mathematics Subject Classification
(MSC2000): 35R30, 35B30.}

\noindent {\footnotesize Keywords: super-resolution, diffraction limit,
sub-wavelength-scaled resonant medium,  Helmholtz resonator, time-reversal.}

\tableofcontents

\section{Introduction} \label{sec-intro}

When light is focused by the objective of a microscope, the notion of light rays converging to an infinitely sharp focal point does not hold true. Instead, as observed by Abbe in 1873 \cite{abbe}, the light wave forms a blurred or diffracted focal spot with a finite size due to diffraction.
The size of the spot depends on the wavelength of the light and the angle at which the light wave converges; the latter is, in turn, determined by the numerical aperture of the objective.
Similarly, a point emitter also appears as a blurred spot, and the size of the spot places a fundamental limit on the minimal distance at which we can resolve two emitters.
 The intensity profile of this spot, which defines the point spread function of the microscope, has approximately the same width as that of the focal spot described above. Consequently, two identical emitters separated by a distance less than the width of the point spread function will appear as a single object, making them unresolvable from each other \cite{book_imaging, cell}. This resolution limit, referred to as the Abbe-Rayleigh or the diffraction limit of resolution,  applies only to light that has propagated for a distance substantially larger than its wavelength \cite{gang1,gang2}.  It is well-known since the seminal work of Synge \cite{synge} that near-field microscopes achieve resolutions well below the diffraction limit.

Discerning features that are spectrally disparate is not challenging by diffraction.
It is now well-established that spectroscopic imaging can yield super-resolution \cite{nature2}. Likewise,
Abbe's barrier  does not prevent finding out the location of a point emitter with arbitrary  precision \cite{candes1,candes2, science}. Breaking Abbe's barrier is only about discerning features within a distance smaller than Abbe's barrier.

Since the mid-20th century, several approaches aimed at pushing the diffraction limits by reducing the focal spot size.  In the optical domain, the sub-wavelength-scaled resonant
media capable of beating the classical diffraction limit and the concepts such as superlenses
\cite{apl}, imaging single molecules \cite{science}, and super-oscillations \cite{berry}, could provide feasible alternatives \cite{nature4}.

Artificially engineered metamaterials offer the possibility of building a superlens that overcomes the diffraction limit. The limitation of present designs of the far-field superlens is that the object must be in the near field of the superlens, although the image can be projected into the far-field. This is because one has to make sure that the evanescent waves do not decay too much before reaching the superlens and being enhanced or converted into propagating waves. Another fundamental challenge is the loss, as most superlens schemes involve resonances in metal, which limits both the practical resolution and transmission \cite{apl, nature1, nature3}.

Imaging single molecules rely on the principle that a single emitter can be localized with a high accuracy if the signal-to-noise ratio in the data is sufficiently high. Breaking Abbe's limit is  possible if the features can be recorded sequentially \cite{science}. Single-molecule-based super-resolution techniques use photoswitching or other mechanisms to stochastically activate individual molecules within the diffraction-limited region at different times.  Images with subdiffraction limit resolution are then reconstructed from the measured positions of individual fluorophores \cite{nature2}.

Superoscillation is the fact that band-limited functions are able to oscillate arbitrarily faster than the highest Fourier components they contain \cite{berry}.
The persistence of superoscillations can be interpreted as the propagation of sub-wavelength structure farther into the field than the more familiar evanescent waves. By using this concept,  examples of sub-wavelength localizations of light generated by a nano-hole array and a thin meta-dielectric shell have been demonstrated recently \cite{nanolett}.

In this paper,  we mathematically investigate the mechanism underlying the resolution enhancement using sub-wavelength-scaled resonant media \cite{LFL2011, lemoult11, lemoult10}.  The main focus is on the possibility to break the diffraction barrier from the far-field measurements using time-reversal.
The principal of time-reversal is to take advantage of the reversibility of lossless waves and using wave equation in order to back-propagate signals to the sources that emitted them; see \cite{fin-99, F08}. The idea is to measure the emitted wave in the far-field and retransmit it through the background medium in a time-reversed chronology. The Helmholtz-Kirchhoff identity shows that the resolution is determined by the behavior of the imaginary part of the Green function \cite{book_imaging, lnm2}.

Many interesting mathematical works have dealt with different aspects of time-reversal phenomena: see, for instance,  \cite{bardos, BF02} for time-reversal in the time-domain, \cite{chambers, nachman} for frequency domain counterpart of time-reversal, \cite{BPZ00, layered_book} for time-reversal in random media, and \cite{abdul} for time-reversal in attenuating media.  It is proved in \cite{yves}  that using structured (i.e., periodic) media
can improve the resolution in imaging from far-field measurements. Indeed,
 the resolution enhancement in terms of the material parameters
and the geometry of the structured medium can be precisely quantified.

The aim here is to develop a
rigorous mathematical theory to explain the super-resolution phenomenon observed in
the experiment \cite{LFL2011}. The mechanism for resolution enhancement is completely different from the one in structured media. Moreover, the resolution enhancement is dramatically larger
\cite{lerosey07}.

A key ingredient in the proposed theory is a novel calculation of resonances and the Green function in half space with the presence of a system of Helmholtz resonators in the quasi-stationary regime. The theory of Helmholtz resonators has a long history. Rayleigh \cite{rayleigh} first showed that for some frequencies close to zero, the field scattered by the resonator is significantly different from the field scattered by the closed resonator.
Then Miles \cite{miles} showed numerically that  the same phenomena occurs also in a neighborhood of any Neumann eigenvalue of the Laplacian in the closed resonator.
In \cite{G1997}, using the matched asymptotic method, asymptotic expansions of resonances
associated with Helmholtz resonators are obtained and their rigorous justification
is established by sophisticated functional analysis arguments.
 Here, we prove existence and derive high-order leading terms in the asymptotic expansions of the resonances using layer potential techniques and generalized Rouche's theorem in the same spirit as in \cite{AKL2009}. The integral equation formulation also yields the leading order terms in the asymptotic expansions of the Green function. Based on this, the analysis of the imaging functional of  time-reversal wave fields becomes possible, which clearly demonstrates the super-resolution property and provides the first mathematical justification of super-resolution in the context of a deterministic medium and sheds light on the  mechanism of super-resolution (or super-focusing) for wave fields in deterministic complex media. The analysis in this paper also opens many new avenues for mathematical imaging and focusing in resonant media. Resonant media may be used to shape, compress, and control waves at a sub-wavelength scale \cite{LFL2011, nature3}.

The paper is organized as follows. Section \ref{time-reversal} is devoted to state significant results in this paper. We first introduce time-reversal in homogeneous media. Then, we present a simplified model for the time-reversal experiment with Helmholtz resonators in \cite{LFL2011}. Finally, some notation and key results are collected. In Section \ref{sec-potential}, we introduce some basic properties of the Riesz potential defined on a flat surface in $\R^3$, which turns out to be crucial to the subsequent analysis. In Section \ref{sec-resonator-single}, we present a novel method to calculate resonances and the Green function for a single Helmholtz resonator. We formulate the scattering problem as a boundary integral equation defined on the opening of the resonator, which depends on the frequency. The resonances of the resonator becomes equivalent to the characteristic values of the operator-valued analytic function.
We decompose the boundary integral operator into several parts. The key part turns out to be the one related to Riesz potential defined on the opening. Useful scaling properties are introduced to study these operators, which allow us to apply generalized Rouche's theorem to calculate the leading asymptotic of the resonances in the quasi-stationary regime and also the Green function.
In Section \ref{sec-resonator-multi}, we study the resonances and Green's function for a system of Helmholtz resonators, which prove Theorem \ref{thm-multi}. The presentation resembles closely the one for the case of a single resonator and can be regarded as a generalization and application of the approach developed in Section \ref{sec-resonator-single}. Finally, in Section \ref{sec-SR-proof}, we prove Theorems  \ref{thm-super-resolution-1} and \ref{thm-super-resolution-2} on super-resolution. The paper ends with a short discussion.

\section{Time-reversal and main results} \label{time-reversal}

\subsection{Time-reversal in the homogenous space} \label{sec-sub-TR}
We present a general setup for the time-reversal experiments in the homogeneous space.

Let $f(t)\in L^2(0, \infty)$ be a signal which has compact support in $\R$. Let
$\Om$ be a domain in $\R^3$, bounded or unbounded.
Let $x_0 \in \Om$ be a given point. Consider the time domain scattering problem
\begin{eqnarray}
u_{tt}(x, t)-\Delta u(x,t)&=& \delta(x-x_0)f(t),   \quad (x,t)\in \Om \times(0, \infty),  \label{wave1}\\
u(x,t)&=&0,                   \quad  x\in \Om, t<0, \label{wave2}\\
\f{\p u}{\p \nu}(x,t)&=&0,                   \quad   (x,t)\in \Om \times (0,\infty). \label{wave3}
\end{eqnarray}
The solution to the above scattering problem can be written as
$$
u(x, t) = \int_{0}^{\infty}G(x, x_0, t, \tau) f(\tau)d\tau,
$$
where $G(x, x_0, t, t_0)$ is the Green function. More precisely, $G(x, x_0, t, t_0)$ solves (\ref{wave1})-(\ref{wave3})
with $f(t) = \delta (t-t_0)$. It is clear that
$G(x, x_0, t, t_0)= G(x, x_0, t-t_0, 0)$. For simplicity, we also write $G(x, x_0, t)$ for
$G(x, x_0, t, 0)$.

We now formulate the time-reversal experiment. Let $T$ be a sufficiently large positive number
and let $\Sigma$ be a surface where the time-reversal mirrors are distributed. We assume that $\Sigma$ is a surface in the far field region which encloses the objects of interest.

The signals recorded are $$\tilde{s}_1= \tilde{s}_1(y, t) = u(y, t),
\tilde{s}_2= \tilde{s}_2(y, t) = \f{\p u}{\p \nu}(y, t) \quad \mbox{for } t\in [0, T], y\in \Sigma.$$
The recorded data are time-reversed in the following way:
\bea
s_1=s_1(y, t) &=& \tilde{s}_1(y, T-t) ,\\
s_2=s_2(y, t) &=& \tilde{s}_2(y, T-t) .
\eea
These new data are emitted to generate a new field which is given by
\beas
 (TRF)(x, t) &=& \int_{0}^{T} d\tau \int_{\Sigma} d\sigma(y) \left(G(x, y, t, \tau)s_2(y, \tau) - \f{\p G}{\p \nu} (x, y, t, \tau)s_1(y, \tau) \right) \\
  &=& \int_{0}^{T} d\tau \int_{\Sigma} d\sigma(y) \left(G(x, y, t, \tau)\f{\p u}{\p \nu}(y, T-\tau) - \f{\p G}{\p \nu} (x, y, t, \tau)u(y, T-\tau)\right)  \\
&=& \int_{0}^{T} d\tau \int_{\Sigma} d\sigma(y) \left(G(y, x, \tau, T-t)\f{\p u}{\p \nu}(y, \tau) - \f{\p G}{\p \nu} (x, y, \tau, T-t)u(y, \tau)\right).
\eeas
Using integration by parts and second Green's formula, one can derive that
\be
(TRF)(x, t) =
 u(x, T-t) - \int_{0}^T d\tau \;  G(x_0, x, \tau, T-t) f(\tau) + \Theta(x,t),
\ee
where
\beas
\Theta(x,t)&=& \int_{\Omega}d y \left(u_t(y, T) G(y, x, T, T-t)  -\f{\p G}{\p t}(y, x, T, T-t) u(y, T)\right) \\
&=& \int_{\Omega}d y\left(u_t(y, T) G(y, x, t)  -\f{\p G}{\p t}(y, x, t)u(y, T)\right).
\eeas
By the local energy-decaying properties for the wave fields,
$u(x, T)$ and $G(x, y,T)$, we can ignore the reminder term $\Theta(x,t)$ by assuming that $T$ is sufficiently large (a discussion on the estimate of $T$ is given in Appendix \ref{appendixB} for the concrete experiment in Section \ref{sec-sub-TRE}).  Thus
\beas
(TRF)(x, t) & \approx &
 u(x, T-t) - \int_{0}^T  G(x_0, x, \tau, T-t) f(\tau)d\tau \\
 &=& \int_{0}^T \big(  G(x, x_0, T-t, \tau)- G(x_0, x, \tau, T-t)\big) f(\tau)d\tau \\
  &=& \int_{0}^T \bigg(  G( x,x_0, T-t- \tau)- G\big(x_0, x, -(T-t-\tau) \big) \bigg) f(\tau)d\tau.
\eeas
In the special case when $f(t) =\delta(t)$, $u(x, t)= G(x, x_0, t)$. Consequently,
\be
(TRF)(x, t) \approx G(x, x_0, T-t) - G(x_0, x, t-T).
\ee
We are interested in the case when $t \approx T$. We define
\be
{\phi}(x,t) = (TRF)(x, t+T) \approx G(x, x_0, -t) - G(x, x_0, t) = K(x, x_0, t).
\ee
The function $K(x, x_0, t)$ is called the resolution kernel in the time domain.

The time-reversal field for the scattering problem (\ref{wave1})-(\ref{wave3}) with general signal $f$ is given by
\be
{\phi} = {\phi}( x, x_0, t)=\int_0^{T} K(x, x_0,t+ \tau)f(\tau)d\tau.
\ee
We may assume that $f$ is compactly supported in $(0, T)$. Then
\be
{\phi}(x, x_0, t) = \int_0^{\infty} K(x, x_0,t+\tau)f(\tau)d\tau = K(x, x_0, \cdot) \ast f(-\cdot)(t).
\ee
In the Fourier domain, we have
\be
\check{\phi}(x, x_0, \omega) = \check{ K}(x, x_0, \om)\bar{ \check{f}}(\om),
\ee
where
\beas
\check{\phi}(x, x_0, \omega) &=& \int \phi (x, x_0, t) e^{i\om t} dt, \\
\check{K}(x, x_0, \omega) &=& \int K (x, x_0, t) e^{i\om t} dt, \\
\check{f}(\omega) &=& \int f( t) e^{i \om t} dt.
\eeas
Note that $\check{K}(x, x_0, \om)= -\check{G}(x, x_0, \om) + \overline{\check{G}(x, x_0, \om)} = -2 i \Im{\check{G}(x, x_0, \om)}$. Therefore,
\beas
{\phi}(x, x_0, t) &=& \f{1}{2\pi}\int \check{\phi}(x, x_0, \om) e^{-i\om t}d \om \\
&=& -\f{i}{\pi}\int  \Im{\check{G}(x, x_0, \om)} \overline{\check{f}}(\om) e^{-i\om t} d \om.
\eeas
Since for $\om \in \R$, $\check{f}(-\om)= \overline{\check{f}}(\om)$ and
$\Im{\check{G}(x, x_0, \om)} = -\Im \overline{{\check{G}}(x, x_0, \om)}$,
we can further deduce that
\beas
{\phi}(x, x_0, t)&=& -\f{i}{\pi} \left(\int_{-\infty}^{0} \Im{\check{G}(x, x_0, \om)} \overline{\check{f}}(\om)e^{-i\om t}  d \om
+ \int_0^{\infty} \Im{\check{G}(x, x_0, \om)} \overline{\check{f}}(\om)e^{-i\om t}  d \om\right) \\
&=& -\f{2}{\pi} \int_0^{\infty} \Im{\check{G}(x, x_0, \om)} \Im{\big(\check{f}(\om)e^{i\om t}\big)} d \om.
\eeas
In particular, at $t=0$, we get
\beas
{\phi}(x, x_0, 0)=  -\f{2}{\pi} \int_0^{\infty} \Im{\check{G}(x, x_0, \om)} \Im{\check{f}}(\om)  d \om.
\eeas

\subsection{Time-reversal experiments with a system of Helmholtz resonators} \label{sec-sub-TRE}

We present a simplified model of the time-reversal experiment in \cite{LFL2011}. We first introduce the concept of Helmholtz resonator \cite{G1997}.
Let $D= S(0, 1) \times [-h, 0]$, where $S(0, 1)= \{(x_1,x_2) :  x_1^2 + x_2^2 \leq 1\}$ and $h$ is the height of $D$, which is of order one. Let $\Lambda \subset S(0, 1) \subset \R^2$ be a simply connected domain which is of size one and let $\e>0$ be a small number. We assume that $0\in \Lambda$ without loss of generality.
We shall call $D$ or $\partial D \backslash \{(x_1, x_2, 0) :  (x_1, x_2) \in \epsilon \Lambda\}$ a Helmholtz resonator.

We now have a system of such resonators which consists of $M$ disjoint $D_j$'s ($1\leq j \leq M$),
where $D_j= D+ z^{(j)}$ and
$z^{(j)}= (z^{(j)}_1, z^{(j)}_2, 0)$ is the center of aperture for $j$-th resonator.
We denote by $\Om^{in}= \bigcup_{j=1}^{M} D_j$, $\Om^{ex}= \{(x_1, x_2, x_3)\in \R^3 :  x_3 >0\}$ and
$\Om_{\e} = \Om^{in} \bigcup \Om^{ex} \bigcup \Lambda_{\e}$ with $\Lambda_{\e}=\bigcup _{j=1}^M \Lambda_{\e, j}$.

Employing the above setup, the time-reversal experiment is carried out in the domain $\Om= \Om_{\e}$ whereas the time-reversal mirror located at $\Sigma = \{x :  |x|^2 = r^2\} \bigcap \Om^{ex}$ with  $r \gg 1$.

\subsection{Notation and preliminaries}
We first introduce two auxiliary Green's functions.
Let $G^{ex}$ be the Green function for the following exterior scattering problem:
\[ \left\{
\begin{array}{ccc}
(\Delta + k^2)G^{ex}(x, y, k) &=&\delta(x-y),  \quad x \in \Om^{ex}, \\
\f{\p G^{ex}}{\p \nu}(x, y, k) &=& 0, \quad x \in \p \Om^{ex}, \\
G^{ex} \mbox{ satisfies the radiation condition},& &
\end{array} \right.
\]
and $G^{in}$ be the Green function for the following interior problem:
\[ \left\{
\begin{array}{ccc}
(\Delta + k^2)G^{in}(x, y, k) &=& \delta(x-y),  \quad x \in D, \\
\f{\p G^{in}}{\p \nu}(x, y, k) &=& 0, \quad x \in \p D.
\end{array} \right.
\]
Throughout the paper, we denote by
$$
W= \{k\in \mathbb{C}: |k| \leq \f{1}{2} k_1\},
$$
where $k_1$ is the first nonzero eigenvalue of the Neumann problem in $D$.

Note that for ease of notation, we always suppress the script $\check{}$ for  the Green functions in the frequency domain.

We have the following result. The proof is similar as the one in \cite{G1997}.
\begin{lem}
Let $y\in \{x_3=0\}$ and $k \in W$. Then,
\bea
G^{ex}(x, y, k) &=& \f{1}{2\pi |x-y|} +R^{ex}(x, y, k), \quad x\in \Om^{ex},\\
G^{in}(x, y, k) &=& \f{1}{2\pi |x-y|} - \f{\psi(x)\psi(y)}{k^2}   +R^{in}(x, y, k), \quad x\in D,
\eea
where
\beas
R^{ex}(x, y, k)&=& \f{ik}{2\pi}\int_{0}^1 e^{ik|x-y|t} d t, \\
R^{in}(x, y, k)&=& k\int_{0}^1 \sin{ik|x-y|t} d t + r(x, y, k)
\eeas
for some function $r$ which is analytic in $W$ with respect to $k$ and is smooth in a neighborhood of $\Lambda$ in the plane $\{x_3=0\}$ with respect to both the variables $x$ and $y$.
\end{lem}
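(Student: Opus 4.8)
The two identities are established separately, following the approach of \cite{G1997}. For $G^{ex}$ I would use the method of images: on the half-space $\Om^{ex}=\{x_3>0\}$ with a homogeneous Neumann condition on $\{x_3=0\}$, the Green function is the outgoing free-space fundamental solution of $\Delta+k^{2}$ with source at $y$ plus its mirror image with source at $y^{*}=(y_1,y_2,-y_3)$; when $y\in\{x_3=0\}$ one has $y^{*}=y$, the two contributions coincide, and $G^{ex}(x,y,k)=\frac{e^{ik|x-y|}}{2\pi|x-y|}$. Inserting the elementary identity $e^{is}=1+is\int_{0}^{1}e^{ist}\,dt$ with $s=k|x-y|$ gives $G^{ex}=\frac{1}{2\pi|x-y|}+R^{ex}$ with $R^{ex}(x,y,k)=\frac{ik}{2\pi}\int_{0}^{1}e^{ik|x-y|t}\,dt$, which is entire in $k$ and continuous up to $x=y$. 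This step is routine.

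\textbf{The interior Green function.} Here there are two tasks: isolating the pole of $G^{in}$ at $k=0$ and pinning down its singular part as $x\to y$. For the first, I would expand $G^{in}$ in the $L^{2}(D)$-orthonormal Neumann eigenfunctions $\varphi_n$ of $-\Delta$ on $D$, with eigenvalues $0=\lambda_0<\lambda_1\le\lambda_2\le\cdots$; since the constraint $|k|\le k_1/2$ keeps $k^{2}$ away from every nonzero Neumann eigenvalue on $W$, this gives $G^{in}(x,y,k)=-\frac{\psi(x)\psi(y)}{k^{2}}+\sum_{n\ge1}\frac{\varphi_n(x)\varphi_n(y)}{\lambda_n-k^{2}}$, with $\psi\equiv|D|^{-1/2}$ the normalized constant mode and the sum, say $\widetilde{G}^{in}$, analytic in $k$ on $W$. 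For the second, since $y\in\Lambda$ lies in the relative interior of the flat face $\{x_3=0\}\cap\partial D$, bounded away from its edge $\partial S(0,1)\times\{0\}$ (we take $\overline{\Lambda}$ inside the open disk), $D$ coincides near $y$ with a half-space; hence $\widetilde{G}^{in}(\cdot,y,k)-G^{ex}(\cdot,y,k)$ solves a homogeneous Helmholtz equation with homogeneous Neumann data in a half-neighbourhood of $y$, so by boundary elliptic regularity it is $C^{\infty}$ there, jointly in $(x,y)$ for $x,y$ near $\Lambda$ and analytic in $k\in W$. Combined with the formula for $G^{ex}$, this yields $G^{in}=\frac{1}{2\pi|x-y|}-\frac{\psi(x)\psi(y)}{k^{2}}+R^{ex}(x,y,k)+\rho(x,y,k)$ with $\rho$ smooth near $\Lambda$ and analytic in $k$; expanding $R^{ex}$ in $k$ then peels off the explicit non-smooth term $k\int_{0}^{1}\sin(ik|x-y|t)\,dt$ and absorbs the remaining smooth pieces into $\rho$, giving $R^{in}=k\int_{0}^{1}\sin(ik|x-y|t)\,dt+r(x,y,k)$ with $r$ analytic in $k$ on $W$ and smooth in $(x,y)$ near $\Lambda$. (Equivalently, one may separate variables in the cylinder $D=S(0,1)\times[-h,0]$ and sum the $x_3$-Neumann modes into hyperbolic functions, which produces the pole and the explicit term directly, the lateral and far-face contributions being smooth near $\Lambda$.)

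\textbf{The main obstacle.} The image formula and the spectral splitting are straightforward; the crux is the interior regularity step — upgrading ``$D$ is locally a half-space near $y$'' to a genuine $C^{\infty}$ and analytic-in-$k$ bound for $\widetilde{G}^{in}-G^{ex}$ that is \emph{uniform} for $k\in W$, in particular near $k=0$ where $G^{in}$ is singular while $\widetilde{G}^{in}$ is not. This requires controlling the Neumann eigenfunction expansion of $\widetilde{G}^{in}$ (convergence and term-by-term differentiation away from the diagonal, via Weyl-type bounds on $\lambda_n$ and $\varphi_n$) and matching the local behaviour near the flat face while staying clear of the edge $\partial S(0,1)\times\{0\}$ — precisely the analysis reused from \cite{G1997}. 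Once that is in place, extracting the precise explicit terms in the statement is elementary.
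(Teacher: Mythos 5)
Your proposal is correct and follows essentially the route the paper itself relies on (it gives no proof, deferring to \cite{G1997}): the image-source formula for the half-space Neumann Green function with $y$ on $\{x_3=0\}$, extraction of the zero Neumann mode $-\psi(x)\psi(y)/k^2$ from the cavity Green function, and local elliptic regularity near the flat face of $D$ (away from the edge of $S(0,1)$) to identify the remainder as the explicit non-smooth term plus a function $r$ smooth in $(x,y)$ near $\Lambda$ and analytic in $k\in W$. One small correction: after removing the zero mode, the difference $\widetilde{G}^{in}-G^{ex}$ satisfies the Helmholtz equation with the smooth constant-in-$x$ source $\psi(x)\psi(y)$ rather than the homogeneous equation, which does not affect the elliptic-regularity conclusion you draw from it.
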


We denote by
$$
R(x, y, k) = R^{ex}(x, y, k)+R^{in}(x, y, k),
$$
and
\be
\alpha_0= R(0,0,0), \quad \alpha_1= \f{\p R}{\p k}(0, 0, 0).
\ee
It is clear that
\be
\alpha_0 \in \R, \quad \Im \alpha_1 =  \Im \f{\p R^{ex}}{\p k}(0, 0, 0) = \f{1}{2\pi}.
\ee

We now introduce the matrices $T=(T_{ij})_{M\times M}$ and $S=(S_{ij})_{M\times M}$ with
\begin{equation} \label{defTS}
\left\{\begin{array}{lll}
T_{ij}&=& \f{1}{2 \pi |z^{(i)}-z^{(j)}|} \quad \mbox{for }\, i\neq j, \quad \mbox{and }\,\, T_{ii}=0,\\
S_{ij}&=& \f{\sqrt{-1}}{2 \pi} + \delta_{ij} \Re \alpha_1.
\end{array} \right. \end{equation}

Observe that $T$ is symmetric, thus $T$ has $M$ real eigenvalues, which are denoted by $\beta_1, \beta_2,\ldots, \beta_M$.
For the ease of exposition, we make in the sequel the following assumption.
\begin{asump} We assume that
$ \beta_1,\ldots,\beta_M \mbox{ are mutually distinct}.$
\end{asump}

This is the generic case among all the possible arrangements of the resonators.
The corresponding normalized eigenvectors are denoted by $Y_1, Y_2,\ldots,Y_M$, respectively. Then
$Y_1, Y_2, \ldots, Y_M$ form a normal basis for $\R^{M}$. We also denote by $Y$ the matrix
$$
Y= (Y_1, Y_2,\ldots,Y_M).
$$

For convenience, we write
\be
\mathcal{G}(x, k) = \big(G^{ex}(x, z^{(1)}, k), G^{ex}(x, z^{(2)}, k),\ldots, G^{ex}(x, z^{(M)},k)\big)^t,
\ee
with the subscript $t$ denoting the transpose.
For each $1\leq j \leq M$, we denote by
\begin{equation} \label{defzeta}
\zeta_j(x, x_0, k) = \mathcal{G}(x, k)^t Y_j Y_j^t \mathcal{G}(x_0, k).
\end{equation}
It is clear that $\zeta_j=\zeta_j(x, x_0, k)$ is analytic in $k$ for fixed $x$ and $x_0$.

\subsection{Main results on the resonances of a system of Helmholtz resonators and the Green function}

It is evident from Section \ref{sec-sub-TR} that the focusing property of the time-reversal experiment in Section \ref{sec-sub-TRE} relies in analysis of the following Green function in the frequency domain
\[ \left\{
\begin{array}{l}
(\Delta + k^2) {G}_{\e}(x, x_0, k) =\delta(x-x_0),  \quad x \in \Om_{\e}, \\
\nm
\f{\p {G}_{\e}}{\p \nu}(x, x_0, k) = 0, \quad x \in \p \Om_{\e}, \\
 \nm
 {G}_{\e}  \mbox{ satisfies the radiation condition}.  \label{helm3-3}
\end{array}
\right.
\]

We first present a result on the resonances of the above scattering problem. The proof is given in Section \ref{sec-sub-resonance-multi}.

\begin{prop} \label{prop-resonances}
 There exist exactly 2M resonances of order one in the domain $W$ for the system of resonators in Section \ref{sec-sub-TRE}, given by
 \bea
 k_{0, \e, j, 1} &=&  \tau_1 \e^{\f{1}{2}} + \tau_{3,j}  \e^{\f{3}{2}} + \tau_{4,j} \e^2 +O(\e^{\f{5}{2}}), \label{formula-k1-1}\\
 k_{0, \e, j, 2} &=& - \tau_1 \e^{\f{1}{2}} - \tau_{3,j} \e^{\f{3}{2}}
+ \tau_{4,j} \e^2+O(\e^{\f{5}{2}}), \label{formula-k1-2}
\eea
where
\begin{equation} \label{deftau1}
\tau_1= \sqrt{\f{ c_{\Lambda}}{|D|}},
\end{equation}
\begin{equation} \label{deftau3}
\tau_{3, j}= -\f{1}{2}(\alpha_0+ \beta_j )\left(\f{ c_{\Lambda}}{|D|}\right)^{\f{1}{2}}c_{\Lambda},
\end{equation}
and
\begin{equation} \label{deftau4}
\tau_{4, j}= - \f{1}{2}\f{c_{\Lambda}^2}{|D|} Y_j^t SY_j
\end{equation}
with $c_{\Lambda}$ being the capacity of the set $\Lambda$ defined by (\ref{constant-capcity}).
\end{prop}

\begin{rmk}
For each nonzero eigenvalue $k_n$ of the Neumann problem in $D$, there exist $2M$ resonances, counting multiplicity, in a neighborhood of $k_n$ in the lower-half complex space. The method developed in the paper can be used to derive the leading asymptotic of the resonances.
\end{rmk}

\begin{rmk}
The approach developed in the paper can be used to derive the full asymptotic of the resonances. We derive only the leading four terms here which suffices for our purpose.
\end{rmk}

We now state our main result on the Green function in $\Om_{\e}$. See Section \ref{sec-sub-green-multi} for the proof.
\begin{thm} \label{thm-multi}
Assume that $k \in \R \bigcap W$. Then for $\e$ sufficiently small, the exterior Green's function has the following asymptotic expansion
\beas
G_{\e}^{ex}(x, x_0, k) &=& G^{ex}(x, x_0, k) - {\e}c_{\Lambda}\sum_{1\leq j \leq M}G^{ex}(z^{(j)}, x_0, k)G^{ex}(x, z^{(j)}, k)\\
&& - \sum_{j=1}^M \left( \f{1}{k-k_{0,\e,j,2}} -\f{1}{k-k_{0,\e,j,1}}\right)\f{(c_{\Lambda}\e)^{\f{3}{2}}}{\sqrt{|D|}} \mathcal{G}(x, k)^tY_j Y_j^t \mathcal{G}(x_0, k) \\
&& + \sum_{1\leq j \leq M } \left(\f{O(\e^{2})}{k- k_{0,\e,j, 2} }+ \f{O(\e^{2})}{k- k_{0,\e,j, 1} }\right) + O(\e^{2}).
\eeas
\end{thm}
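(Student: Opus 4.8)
The plan is to derive the asymptotic expansion of $G_\e^{ex}$ by setting up a boundary integral equation on the union of apertures $\Lambda_\e = \bigcup_j \Lambda_{\e,j}$, exactly mirroring the single-resonator analysis developed in Section \ref{sec-resonator-single}. First I would represent $G_\e$ separately in $\Om^{ex}$ and in each $D_j$: in $\Om^{ex}$ write $G_\e^{ex}(x,x_0,k) = G^{ex}(x,x_0,k) + \int_{\Lambda_\e} G^{ex}(x,y,k)\varphi(y)\,d\sigma(y)$, and in $D_j$ write $G_\e^{in}(x,x_0,k) = \int_{\Lambda_{\e,j}} G^{in}(x,y,k)\varphi(y)\,d\sigma(y)$ (with $x_0 \in \Om^{ex}$, so no interior source term). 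Imposing continuity of the field across each aperture $\Lambda_{\e,j}$ yields a single integral equation on $\Lambda_\e$ for the jump density $\varphi$; because the resonators are disjoint and the interior Green's functions $G^{in}$ live on separate components, the interior part of the operator is block-diagonal, while the exterior part couples the blocks through the off-diagonal terms $G^{ex}(z^{(i)},z^{(j)},k) \approx T_{ij}$ to leading order.

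Next I would rescale each aperture by $\e$, pulling the Riesz-potential piece $\frac{1}{2\pi|x-y|}$ into its $O(1)$ normalized form on $\Lambda$ (this is where the capacity $c_\Lambda$ and the constant function on $\Lambda$ enter, via the results of Section \ref{sec-potential}), and expand the remaining smooth kernels $R^{ex}, R^{in}$ and the singular term $-\psi(x)\psi(y)/k^2$ from the Lemma in powers of $\e$ and $k$. The resulting operator-valued analytic function on $W$ has, after projecting onto the span of the constant densities on each component, an $M\times M$ (really $2M\times 2M$ once one accounts for the two resonance branches) reduced symbol whose leading behavior is governed by $-\psi\otimes\psi/k^2 + \e(\text{capacity term}) + \e(\alpha_0 I + T) + \cdots$; diagonalizing $T$ via $Y_1,\dots,Y_M$ decouples this into $M$ scalar problems, each of which reproduces the single-resonator structure with $\alpha_0$ replaced by $\alpha_0 + \beta_j$ and with the $\e^2$ correction $Y_j^t S Y_j$. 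The poles of the inverse are then precisely the resonances $k_{0,\e,j,1}, k_{0,\e,j,2}$ of Proposition \ref{prop-resonances}, and a Neumann-series / generalized-Rouché argument gives the inverse operator up to the stated orders. Substituting back into the representation formula for $x, x_0 \in \Om^{ex}$, the rank-one contribution from each diagonalized block produces the term $\mathcal{G}(x,k)^t Y_j Y_j^t \mathcal{G}(x_0,k)$ weighted by $\frac{1}{k-k_{0,\e,j,2}} - \frac{1}{k-k_{0,\e,j,1}}$ times $(c_\Lambda\e)^{3/2}/\sqrt{|D|}$, the non-resonant $\e c_\Lambda$-term is the static-capacitance correction, and the remainders collect into the $O(\e^2)/(k-k_{0,\e,j,\cdot})$ and $O(\e^2)$ error terms; the hypothesis $k \in \R \cap W$ keeps us bounded away from the resonances (which sit in the lower half-plane) so the Green's function itself is $O(1)$.

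The main obstacle I expect is controlling the coupling between resonators uniformly: one must show that the off-diagonal blocks of the integral operator, which are $O(\e)$ relative to the dominant diagonal $-\psi\otimes\psi/k^2$ piece but become comparable to the $\e^{3/2}$-scale splitting between the two branches near a resonance, are correctly captured by the finite matrix $T$ (plus $S$ at next order) and that the higher multipole moments on each aperture — orthogonal to the constant densities — contribute only at order $\e^2$ and do not interfere with the eigenvalue separation. This requires the genericity Assumption \ref{asump1} ($\beta_j$ mutually distinct) to guarantee that the $M$ decoupled scalar problems have well-separated resonances of order one, so that generalized Rouché's theorem applies blockwise and the residue computation at each simple pole $k_{0,\e,j,\ell}$ is clean. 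A secondary technical point is verifying that the error terms are uniform in $k$ over $\R\cap W$ and in $x,x_0$ in compact subsets of $\Om^{ex}$, which follows from the analyticity and smoothness statements in the Lemma together with the uniform invertibility away from the resonance set.
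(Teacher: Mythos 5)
Your proposal follows essentially the same route as the paper: represent $G_\e$ via a density on the apertures, reduce the boundary integral equation by projecting onto the constant densities $\psi_j$ to the $M\times M$ matrix problem $\big(k^2-A(k,\e)\big)x=k^2b$, factorize and diagonalize via $Y$ with generalized Rouch\'e's theorem to locate the $2M$ simple poles, and substitute back into the representation formula with the off-diagonal coupling captured by $T$ (and $S$ at next order) and the non-constant multipole contributions relegated to $O(\e^2)$ — this is exactly Proposition \ref{prop-inhomo1-mul} combined with the term-by-term estimates of Section \ref{sec-sub-green-multi}. One minor caveat: since the resonances lie only $O(\e^{2})$ below the real axis, for real $k$ near $\Re k_{0,\e,j,\ell}$ the factors $1/(k-k_{0,\e,j,\ell})$ can be as large as $O(\e^{-2})$, so the hypothesis $k\in\R\cap W$ only guarantees these factors are finite (no pole is hit), not that $G_{\e}^{ex}$ is $O(1)$; the theorem keeps those terms explicit precisely because they can dominate near resonance.
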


\begin{rmk}
The method developed in the paper can be used to derive similar results about the Green function in the whole space with the presence of a system of Helmholtz resonators as well as other settings. The advantage of the setting in Section \ref{sec-sub-TRE} is that there is no resonance for the limiting exterior scattering problem, which is not the case for a general setting. However, the method in the paper still applies and one needs only to shrink the region $W$ to exclude the resonances from the limiting exterior scattering problem.

The method can be also used to derive asymptotic of the Green function when the frequency is close to any of the nonzero eigenvalues of the Neumann problem in $D$.
\end{rmk}

\subsection{Main results on the super-resolution (or super-focusing)} \label{sec-intro-sub-super-resolution}

As a consequence of the result of the Green function in the previous section, we
can establish the following result on super-resolution (or super-focusing), which
shows that super-resolution can be achieved with a single specific frequency. See Section \ref{sec-sub-proof-super-1} for a detailed proof.

\begin{thm} \label{thm-super-resolution-1}
Let $\tau_1$ be given by (\ref{deftau1}), where $c_{\Lambda}$ is the capacity of the set $\Lambda$ defined by (\ref{constant-capcity}). For $k= \tau_1 \sqrt{\e}$, the resolution function $\Im{G}_{\e}^{ex}$ has the following estimate:
$$
\Im{G}_{\e}^{ex}(x, x_0, k)= \f{\sin{\tau_1 \sqrt{\e}|x-x_0|}}{2\pi|x-x_0|}+\f{c_{\Lambda}^{\f{3}{2}}}{|D|^{\f{1}{2}}} \e^{\f{1}{2}}\sum_{j=1}^M
\frac{\Im \tau_{4,j}}{\tau^2_{3,j}}
 \zeta_j(x, x_0,0) + O(\e),
$$
where $\zeta_j(x, x_0, 0)$ is given by (\ref{defzeta}) and $\tau_{3,j}$ and $\tau_{4,j}$ are defined by (\ref{deftau3}) and (\ref{deftau4}), respectively.
\end{thm}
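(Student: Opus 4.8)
The plan is to derive the stated formula for $\Im G_{\e}^{ex}$ directly from the asymptotic expansion of the Green function in Theorem \ref{thm-multi} by specializing to $k = \tau_1\sqrt{\e}$ and carefully tracking which terms contribute at orders $O(1)$ and $O(\e^{1/2})$, with everything smaller absorbed into $O(\e)$. First I would substitute $k = \tau_1\sqrt{\e}$ into the four groups of terms in Theorem \ref{thm-multi} and take imaginary parts. For the leading term $G^{ex}(x,x_0,k)$, the Lemma on $G^{ex}$ gives $G^{ex}(x,x_0,k) = \frac{1}{2\pi|x-x_0|} + R^{ex}(x,x_0,k)$ with $R^{ex}(x,x_0,k) = \frac{ik}{2\pi}\int_0^1 e^{ik|x-x_0|t}\,dt$; since $k\in\R$, the real part $\frac{1}{2\pi|x-x_0|}$ contributes nothing to the imaginary part, while $\Im R^{ex}$ reconstitutes $\frac{\sin k|x-x_0|}{2\pi|x-x_0|} = \frac{\sin\tau_1\sqrt{\e}|x-x_0|}{2\pi|x-x_0|}$, giving the first term of the claimed expansion. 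The second term in Theorem \ref{thm-multi}, $-\e c_\Lambda \sum_j G^{ex}(z^{(j)},x_0,k)G^{ex}(x,z^{(j)},k)$, is $O(\e)$ since each $G^{ex}$ factor is $O(1)$ (here one uses that $z^{(j)}$, $x$, $x_0$ are all at order-one separation from each other, so the $\frac{1}{2\pi|\cdot|}$ singular parts stay bounded), so it drops into the remainder.

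The heart of the argument is the third term, the resonance-pole contribution
$$
-\sum_{j=1}^M\left(\frac{1}{k - k_{0,\e,j,2}} - \frac{1}{k - k_{0,\e,j,1}}\right)\frac{(c_\Lambda\e)^{3/2}}{\sqrt{|D|}}\,\zeta_j(x,x_0,k).
$$
Using Proposition \ref{prop-resonances}, I would expand $k_{0,\e,j,1} = \tau_1\sqrt{\e} + \tau_{3,j}\e^{3/2} + \tau_{4,j}\e^2 + O(\e^{5/2})$ and $k_{0,\e,j,2} = -\tau_1\sqrt{\e} - \tau_{3,j}\e^{3/2} + \tau_{4,j}\e^2 + O(\e^{5/2})$. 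At $k = \tau_1\sqrt{\e}$: the difference $k - k_{0,\e,j,1} = -\tau_{3,j}\e^{3/2} - \tau_{4,j}\e^2 + O(\e^{5/2})$, so $\frac{1}{k - k_{0,\e,j,1}} = -\frac{1}{\tau_{3,j}\e^{3/2}}\left(1 - \frac{\tau_{4,j}}{\tau_{3,j}}\sqrt{\e} + O(\e)\right)$; meanwhile $k - k_{0,\e,j,2} = 2\tau_1\sqrt{\e} + \tau_{3,j}\e^{3/2} - \tau_{4,j}\e^2 + O(\e^{5/2})$ is $O(\sqrt{\e})$ and nonvanishing, so $\frac{1}{k - k_{0,\e,j,2}} = O(\e^{-1/2})$. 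Multiplying by $(c_\Lambda\e)^{3/2}/\sqrt{|D|}$: the $\frac{1}{k - k_{0,\e,j,2}}$ piece yields an $O(\e)$ contribution (negligible), while the $\frac{1}{k - k_{0,\e,j,1}}$ piece yields $\frac{c_\Lambda^{3/2}}{\sqrt{|D|}}\cdot\frac{1}{\tau_{3,j}}\left(1 - \frac{\tau_{4,j}}{\tau_{3,j}}\sqrt{\e} + O(\e)\right)\zeta_j(x,x_0,k)$. The $O(1)$ part of this, $\frac{c_\Lambda^{3/2}}{\sqrt{|D|}\,\tau_{3,j}}\zeta_j(x,x_0,k)$, is \emph{real} — because $\tau_{3,j}$ is real (by \eqref{deftau3}, $\alpha_0\in\R$ and $\beta_j\in\R$), $c_\Lambda$, $|D|$ are real, and $\zeta_j(x,x_0,k)$ for real $k$ is real-valued (it is built from $\mathcal{G}(x,k)^tY_jY_j^t\mathcal{G}(x_0,k)$ with real $Y_j$; I would need a small lemma that $G^{ex}(x,y,k)$ has, at this order and for these order-one-separated points, a real leading part plus $O(\e)$ imaginary correction — concretely $\Im G^{ex}(x,y,k) = \frac{\sin k|x-y|}{2\pi|x-y|} = O(\sqrt{\e})$, so $\Im\zeta_j(x,x_0,k) = O(\sqrt{\e})$ and hence contributes only at $O(\e)$ after the $O(1)$ prefactor). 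Therefore the $O(1)$ part contributes \emph{nothing} to $\Im G_{\e}^{ex}$. What survives at $O(\e^{1/2})$ is $-\frac{c_\Lambda^{3/2}}{\sqrt{|D|}}\cdot\frac{\tau_{4,j}}{\tau_{3,j}^2}\sqrt{\e}\cdot\zeta_j(x,x_0,k)$ from this term; wait — I must recombine with the overall minus sign in front of the sum and with the sign bookkeeping above, which flips it to $+\frac{c_\Lambda^{3/2}}{\sqrt{|D|}}\cdot\frac{\tau_{4,j}}{\tau_{3,j}^2}\sqrt{\e}\cdot\zeta_j(x,x_0,k)$, and taking imaginary parts picks out $\Im\tau_{4,j}$ (since $\tau_{3,j}^2$, $c_\Lambda^{3/2}/\sqrt{|D|}$ are real and $\zeta_j(x,x_0,k) = \zeta_j(x,x_0,0) + O(\sqrt{\e})$), yielding exactly $\frac{c_\Lambda^{3/2}}{|D|^{1/2}}\e^{1/2}\sum_j\frac{\Im\tau_{4,j}}{\tau_{3,j}^2}\zeta_j(x,x_0,0)$, as claimed. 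Finally the fourth group $\sum_j\left(\frac{O(\e^2)}{k - k_{0,\e,j,2}} + \frac{O(\e^2)}{k - k_{0,\e,j,1}}\right) + O(\e^2)$ contributes: the $\frac{O(\e^2)}{k - k_{0,\e,j,1}}$ term is $O(\e^2)/O(\e^{3/2}) = O(\e^{1/2})$ — this is a genuine worry — but its imaginary part, or rather the point is that the $O(\e^2)$ in Theorem \ref{thm-multi} is real-coefficient at leading order (one would verify from the construction in Section \ref{sec-resonator-multi} that the $\tau_{4,j}$-type corrections are the only source of imaginary parts at this order), so after careful inspection it contributes only at $O(\e)$ to the imaginary part; the $\frac{O(\e^2)}{k - k_{0,\e,j,2}}$ term is $O(\e^{3/2})$, negligible.

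Assembling the three surviving pieces — $\frac{\sin\tau_1\sqrt{\e}|x-x_0|}{2\pi|x-x_0|}$ from the leading Green's function, zero from the $O(1)$ part of the pole term, and $\frac{c_\Lambda^{3/2}}{|D|^{1/2}}\e^{1/2}\sum_j\frac{\Im\tau_{4,j}}{\tau_{3,j}^2}\zeta_j(x,x_0,0)$ from the $O(\sqrt{\e})$ part of the pole term — and collecting everything else into $O(\e)$ gives the theorem. The main obstacle I anticipate is precisely the bookkeeping of what is real versus imaginary at each order: one must be confident that $\tau_{3,j}$ is real, that $\zeta_j(x,x_0,k)$ is real up to $O(\sqrt{\e})$ (which requires the remark that $x$, $x_0$, and the $z^{(j)}$ are at mutual distances of order one, so no near-field singularity spoils the estimate, and that $\Im G^{ex}$ at small $k$ is itself $O(\sqrt{\e})$), and — most delicately — that the anonymous $O(\e^2)$ terms in Theorem \ref{thm-multi}, when divided by the smallest denominator $k - k_{0,\e,j,1} = O(\e^{3/2})$, do not inject a spurious $O(\e^{1/2})$ \emph{imaginary} contribution that would corrupt the coefficient. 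Resolving that last point requires going back into the proof of Theorem \ref{thm-multi} to confirm that the imaginary part of those remainder terms is genuinely one order smaller, i.e. $O(\e^{5/2})$ after the relevant combination, or equivalently that $\Im\tau_{4,j}$ captures the full imaginary content at the resonant order; this is where most of the real work lies, while the rest is the elementary Taylor expansion of $\frac{1}{k - k_{0,\e,j,\ell}}$ sketched above.
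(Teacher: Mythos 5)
Your route is the paper's route: decompose $G_{\epsilon}^{ex}$ according to Theorem \ref{thm-multi}, set $k=\tau_1\sqrt{\epsilon}$, expand the pole factors with Proposition \ref{prop-resonances}, observe that only the poles $1/(k-k_{0,\epsilon,j,1})$ can produce an $O(\sqrt{\epsilon})$ imaginary contribution, and sweep the remaining groups into $O(\epsilon)$. Your expansion of $1/(k-k_{0,\epsilon,j,1})$ and the sign bookkeeping for the retained term are correct, and your flagged concern about the anonymous $O(\epsilon^2)$ remainders divided by $k-k_{0,\epsilon,j,1}$ corresponds to what the paper actually does there (it freezes those analytic remainders at $k_{0,\epsilon,j,1}$, producing $k$-independent constants $C_{j,1}=O(\epsilon^2)$), so on that point you are at the same level as the paper.

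The genuine gap is at the decisive step, and it is arithmetic. You discard the $O(1)$ part of the pole contribution, $\frac{c_\Lambda^{3/2}}{\sqrt{|D|}\,\tau_{3,j}}\zeta_j(x,x_0,k)$, on the grounds that it is real; but $\zeta_j(x,x_0,k)$ is \emph{not} real for real $k$: since $G^{ex}(x,z^{(l)},k)=e^{ik|x-z^{(l)}|}/(2\pi|x-z^{(l)}|)$, one has $\Im \zeta_j(x,x_0,k)=k\,\Im\zeta_j'(0)+O(k^3)=O(\sqrt{\epsilon})$ with $\Im\zeta_j'(0)$ generically nonzero. You concede this and then assert it ``contributes only at $O(\epsilon)$ after the $O(1)$ prefactor'' --- but $O(1)\cdot O(\sqrt{\epsilon})=O(\sqrt{\epsilon})$, not $O(\epsilon)$. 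Concretely, the cross term $\Re\bigl[1/(k-k_{0,\epsilon,j,1})\bigr]\cdot\Im\zeta_j(k)\sim -\tau_1\Im\zeta_j'(0)/(\tau_{3,j}\epsilon)$ is, after multiplication by $(c_\Lambda\epsilon)^{3/2}/\sqrt{|D|}$, of exactly the same order $O(\sqrt{\epsilon})$ as the term the theorem keeps, so your argument as written leaves an unaccounted contribution that would corrupt the very coefficient you are computing. The paper organizes this step differently: in the lemma of Section \ref{sec-sub-proof-super-1} it first replaces $\zeta_j(k)$ by the \emph{real number} $\zeta_j(0)$ (estimating the replacement error), so that the only surviving $O(\sqrt{\epsilon})$ piece is $\zeta_j(0)\,\Im\bigl[1/(k-k_{0,\epsilon,j,1})\bigr]$ with $\Im\bigl[1/(k-k_{0,\epsilon,j,1})\bigr]=\Im k_{0,\epsilon,j,1}/|k-k_{0,\epsilon,j,1}|^2\approx \Im\tau_{4,j}/(\tau_{3,j}^2\,\epsilon)$, recovering the stated coefficient. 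To repair your proof you must either carry out that replacement and explicitly control $\Im\bigl[(\zeta_j(k)-\zeta_j(0))/(k-k_{0,\epsilon,j,1})\bigr]$, or track the cross term and show why it does not enter the final formula; the one-line order count you use to dismiss it is where the argument breaks, and it is precisely the real-versus-imaginary bookkeeping you yourself identified as the crux.
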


We now consider the case when the signal is broadband. We shall prove that super-resolution can be achieved when the signal is concentrated in the quasi-stationary regime. A signal $f$ is said to be quasi-stationary regime if $\check{f}(\cdot)$ is mainly supported in some $O(\sqrt{\e})$ neighborhood of the origin (a precise definition will be given later).
The result is based on the study of the following imaging functional:
$$
I = \int_0^{\infty} \Im{{G}_{\e}^{ex}(x, y_0, k)} \Im{\big(\check{f}(k)e^{ikt}\big)} d k.
$$

We write $f(t)$ in the following form
\be
f(t) = \e^{\f{1}{4}} F(\e^{\f{1}{2}}t),
\ee
where $F$ is the root signal such that the following two natural conditions holds
\bea
& \mbox{supp }F \subset [0, C_1],  \label{cond-supp}\\
& \int_{0}^{\infty} F(t)^2 dt = O(1) \nonumber.
\eea


\begin{definition}
We say that the signal $f= f(t)= \e^{\f{1}{4}} F(\e^{\f{1}{2}}t)$ is quasi-stationary if, together with (\ref{cond-supp}), the following conditions
are satisfied
\bea
\|F\|_{H^2(\R)} &=& O(1), \label{cond-H2} \\
\int_{\e^{- \delta}}^{\infty} |\check{F}(k)| dk  & \ll & O(\e) \quad \mbox{for some $\delta \in (0, \f{1}{2})$}, \label{cond-quasi1} \\
\int_{\f{k_1}{2\sqrt{\e}}}^{\infty}  \Im{G}_{\e}^{ex}(x, x_0, \sqrt{\e}k)\Im{\big( \check{F}(\tau_1)e^{-i\tau_1 \sqrt{\e}t}\big)} dk & \ll & O( \e). \label{cond-quasi2}
\eea
\end{definition}

Discussions on the condition (\ref{cond-quasi1}) and (\ref{cond-quasi2}) are given in Section \ref{sec-sub-proof-super-2}, after the proof of Theorem \ref{thm-super-resolution-2}.

We are now ready to state our main theorem on the super-resolution. The proof is given in Section \ref{sec-sub-proof-super-2}.
\begin{thm} \label{thm-super-resolution-2}
For the time-reversal experiment in Section \ref{sec-sub-TRE}, super-resolution (super-focusing) can be claimed when
the signal $f= \e^{\f{1}{4}} F(\e^{\f{1}{2}}t)$ is quasi-stationary. Moreover, the imaging functional has the following form:
\beas
I &=&  \int_0^{2 \tau_1 \e^{\f{1}{2}}} \f{\sin{k|x-x_0|}}{2\pi|x-x_0|} \e^{-\f{1}{4}} \Im{\big( \check{F}(\e^{-\f{1}{2}}k)e^{ikt}\big)} dk \\
 &&  +\f{(c_{\Lambda})^{\f{3}{2}}}{\sqrt{|D|}}  \e^{\f{5}{4}}  \Im{\big( \check{F}(\tau_1)e^{i\tau_1 \sqrt{\e}t}\big)} \sum_{j=1}^M \f{1}{4\pi |x- z^{(j)}|\cdot |x_0- z^{(j)}| }  + o( \e^{\f{5}{4}}).
\eeas
\end{thm}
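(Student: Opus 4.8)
The plan is to start from the time-domain identity for the imaging functional established in Section \ref{sec-sub-TR}, namely
$$
I = \int_0^{\infty} \Im{{G}_{\e}^{ex}(x, y_0, k)} \Im{\big(\check{f}(k)e^{ikt}\big)} d k,
$$
and insert the asymptotic expansion of $\Im G_\e^{ex}$ that follows from Theorem \ref{thm-multi}. First I would change variables $k = \sqrt{\e}\, \kappa$ so that the integral is expressed in terms of the root signal $F$ via $\check f(\sqrt\e \kappa) = \e^{-1/4}\check F(\kappa)$; the quasi-stationarity hypothesis (\ref{cond-quasi1}) then lets me truncate the $\kappa$-integral to a bounded window, up to an error $o(\e^{5/4})$, and (\ref{cond-quasi2}) lets me discard the tail beyond $k_1/(2\sqrt\e)$, i.e.\ outside the region $W$ where the expansion of Theorem \ref{thm-multi} is valid. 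So after these two reductions the whole contribution comes from $k \in W$ and I may freely substitute the Green-function expansion.

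Next I would split $\Im G_\e^{ex}$ into its three pieces from Theorem \ref{thm-multi}: (i) the free half-space term $\Im G^{ex}(x,x_0,k) = \frac{\sin k|x-x_0|}{2\pi|x-x_0|} + O(k^2)$, which after the change of variables and truncation to $k\in[0,2\tau_1\sqrt\e]$ produces exactly the first displayed term of the theorem (the upper limit $2\tau_1\sqrt\e$ being allowed because on the remaining window $\check F$ is negligible by quasi-stationarity and the extra length of the window only contributes $o(\e^{5/4})$); (ii) the rank-one ``dipole'' correction $-\e c_\Lambda \sum_j G^{ex}(z^{(j)},x_0,k)G^{ex}(x,z^{(j)},k)$, whose imaginary part at $k = O(\sqrt\e)$ is $O(\e\cdot\sqrt\e) = O(\e^{3/2})$ smaller than the target order and hence absorbed into the remainder; and (iii) the resonant terms $\sum_j\big(\frac{1}{k-k_{0,\e,j,2}} - \frac{1}{k-k_{0,\e,j,1}}\big)\frac{(c_\Lambda\e)^{3/2}}{\sqrt{|D|}}\zeta_j(x,x_0,k)$, which carry the super-resolution. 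For this last piece I would use Proposition \ref{prop-resonances}: $k_{0,\e,j,1,2} = \pm\tau_1\sqrt\e \pm \tau_{3,j}\e^{3/2} + \tau_{4,j}\e^2 + O(\e^{5/2})$, so the imaginary part of each simple pole, concentrated near $k = \pm\tau_1\sqrt\e$, behaves like a sharply peaked Lorentzian of width $\sim \Im\tau_{4,j}\,\e^2$ and height $\sim 1/(\Im\tau_{4,j}\,\e^2)$. Against the smooth, slowly varying weight $\zeta_j(x,x_0,k)\Im(\check f(k)e^{ikt})$ the Lorentzian integrates (after taking the imaginary part of the difference of the two poles) to its value at the peak $k = \tau_1\sqrt\e$ times a constant, giving the factor $\frac{(c_\Lambda\e)^{3/2}}{\sqrt{|D|}}\cdot\e^{-1/4}\zeta_j(x,x_0,0)$ — note $\zeta_j(x,x_0,\tau_1\sqrt\e) = \zeta_j(x,x_0,0) + O(\sqrt\e)$ by analyticity — and, after using $G^{ex}(x,z^{(j)},0) = \frac{1}{2\pi|x-z^{(j)}|}$ in the definition (\ref{defzeta}) of $\zeta_j$ together with $Y_jY_j^t$ summing appropriately, the explicit $\sum_j \frac{1}{4\pi|x-z^{(j)}||x_0-z^{(j)}|}$. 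Tracking the powers: $\e^{3/2}\cdot\e^{-1/4} = \e^{5/4}$, which matches the claimed order, and the prefactor $\Im(\check F(\tau_1)e^{i\tau_1\sqrt\e t})$ emerges because on the tiny support of the Lorentzian $\check f(k)e^{ikt}$ is frozen at $k = \tau_1\sqrt\e$.

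The final step is bookkeeping of the error terms: the $O(\e^2)/(k - k_{0,\e,j,\ell})$ and $O(\e^2)$ remainders in Theorem \ref{thm-multi}, integrated in the same way, give $O(\e^2)\cdot\e^{-1/4}\cdot O(1) = O(\e^{7/4}) = o(\e^{5/4})$ after accounting for the normalization of $F$ and the $H^2$ bound (\ref{cond-H2}) on $\check F$; the higher-order corrections to the pole locations shift the peak by $O(\e^{3/2})$, which changes $\zeta_j$ and $\check f$ by $O(\e^{3/2})$ relative, hence contribute $o(\e^{5/4})$ as well. Collecting the surviving terms yields exactly the stated expression for $I$. I expect the main obstacle to be step (iii): making rigorous the claim that integrating the product of a shrinking Lorentzian against the smooth weight reproduces the peak value up to $o(\e^{5/4})$ — one must control the weight's variation over the Lorentzian's width (handled by analyticity of $\zeta_j$ and the $H^2$ regularity of $\check F$), verify the constant in front of the Lorentzian integral, and be careful that $k$ ranges over $[0,\infty)$ so only the pole near $+\tau_1\sqrt\e$ (not $-\tau_1\sqrt\e$) is picked up, which is precisely why the two poles $k_{0,\e,j,1}$ and $k_{0,\e,j,2}$ enter with opposite signs and do not cancel. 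A secondary technical point is confirming that the truncations licensed by (\ref{cond-quasi1})--(\ref{cond-quasi2}) are consistent with needing the expansion only on $W\cap\R$, which is exactly what those conditions were designed to guarantee.
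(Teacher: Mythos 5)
Your proposal follows essentially the same route as the paper's proof: insert the expansion of $\Im G^{ex}_{\epsilon}$ from Theorem \ref{thm-multi} into $I$, discard the $O(\epsilon)$ cross term and the high-frequency tail via (\ref{cond-quasi2}), and extract the $\epsilon^{5/4}$ super-resolution term by integrating the sharply peaked pole near $k=\tau_1\sqrt{\epsilon}$ (only this pole, since $k\geq 0$) against the slowly varying weight, freezing $\zeta_j$ at $k=0$ and $\check f$ at $\tau_1\sqrt{\epsilon}$ and using $\sum_j Y_jY_j^t=Id$. This is exactly the paper's splitting $I=I_1+\cdots+I_5$, with your ``Lorentzian'' step carried out there via the explicit integrals of Lemma \ref{lem-integral} and the Lipschitz bound $|s(k)-s(a)|\leq O(\epsilon^{-3/4})|k-a|$ of Lemma \ref{lem-esti}, and with the same $O(\epsilon^{7/4})$, $O(\epsilon^{9/4})$ bookkeeping of the remainders.

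Two caveats. First, your justification of the upper limit $2\tau_1\sqrt{\epsilon}$ in the free-space term overstates what quasi-stationarity provides: condition (\ref{cond-quasi1}) only makes $\check F$ negligible beyond $\epsilon^{-\delta}$ with $\delta<\frac{1}{2}$, so it licenses truncating $I_1$ to $[0,\epsilon^{\frac{1}{2}-\delta}]$ (which is what the paper's proof actually establishes), not to $[0,2\tau_1\sqrt{\epsilon}]$; suppressing the intermediate band requires extra decay or smoothness of $\check F$, as the paper's remark on $F\in \mathcal{C}^{\infty}_0$ indicates, so on this point your argument shares the looseness of the theorem's literal statement rather than closing it. Second, for the remainders $O(\epsilon^{2})/(k-k_{0,\epsilon,j,\ell})$ the numerators depend on $k$ and the integrand is $O(1)$ pointwise near the pole; before applying your peak-value argument one must freeze the numerators at the pole (the paper does this through analyticity and Cauchy's formula, yielding the constants $C_{j,\ell}$), after which one indeed obtains $O(\epsilon^{2})\cdot O(\epsilon^{-\frac{1}{4}})=O(\epsilon^{\frac{7}{4}})=o(\epsilon^{\frac{5}{4}})$ as you claim; your phrase ``integrated in the same way'' should be understood to include this step.
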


As will be shown later, the super-resolution is due to the term
$$
\f{(c_{\Lambda})^{\f{3}{2}}}{\sqrt{|D|}}  \e^{\f{5}{4}}  \Im{\big( \check{F}(\tau_1)e^{i\tau_1 \sqrt{\e}t}\big)} \sum_{j=1}^M \f{1}{4\pi |x- z^{(j)}|\cdot |x_0- z^{(j)}| } .
$$
This term allows to find the location $x_0$ of the source within a resolution of order $O(1)$ while the first term in $I$ yields a resolution limit (or a focal spot of size) of order $O(\e^{\delta-\f{1}{2}})$.

\begin{rmk}
The term ``sub-wavelength resonator" is associated with scattering in the quasi-stationary regime. In fact, in the case of the Helmholtz resonator, it is in that regime that the free space wavelength is significantly greater than the size of the resonator. We also remark that the resonance in the quasi-stationary regime results from the perturbations  of the zero-eigenvalue of the Neumann problem in the closed resonator that are due to small openings.

\end{rmk}

\section{Preliminaries on potential theory}  \label{sec-potential}
Let $\Lambda \subset \{(x_1, x_2, x_3) \in \R^3: x_3=0\}$ be a simple connected surface with smooth boundary.
Let $\mu \in \mathcal{S}'(\R^2)$ whose support is contained in $\bar{\Lambda}$. Denote by $\hat{\mu}=\hat{\mu}(\xi)$ its Fourier transform. We then define the energy of $\mu$ by the following formula
$$
\|\mu\|^2 = \frac{1}{2\pi^2} \int_{\R^2}\f{|\hat{\mu}(\xi)|^2}{|\xi|}d\xi.
$$
The space of above $\mu$ with finite energy is denoted by $V$, i.e.,
$$
V= \{\mu: \mbox{supp} \mu \subset \bar{\Lambda}, \int_{\R^2}\f{|\hat{\mu}(\xi)|^2}{|\xi|} d\,\xi < \infty\}.
$$
It is clear that $V$ is a Hilbert space. We now define the Riesz potential on the space $V$; see \cite{L1972}. For each $\mu\in V$, we define
\be
 \mathcal{L}[\mu] (x) = \f{1}{\pi} (\mu, \f{1}{|x-\cdot|})_{\Lambda}=\int_{\Lambda} \f{\mu(y)}{\pi |x-y|}dy, \quad x\in \R^3 \backslash \Lambda.
\ee

Let $U(x)=\mathcal{L}[\mu] (x)$, then one can show from \cite[Chapter VI]{L1972} that $U\in H^1(\R^3)$ and $U$ satisfies the following equation
$$
- \triangle U = 4 \delta(x_3)\mu.
$$
Moreover, the following identities hold
\be \label{potential-identity}
(\mu, U)_{\Lambda} =  \int_{\Lambda\times \Lambda} \f{\mu(x)\mu(y)}{\pi |x-y|}dxdy  = \frac{1}{4} \int_{\R^3} |\nabla U|^2 dx =\|\mu\|^2.
\ee
Denote by $\kappa$ and $\kappa_1$ the trace operator from $\R^3$ to $\R^2=\{x_3=0\}$ and $\Lambda$, respectively. For each $f\in H^1(\R^3)$, $\kappa(f) = f|_{\{x_3=0\}} \in H^{\f{1}{2}}(\R^2)$ and $\kappa_1(f)= f|_{\Lambda}$.
We have
\beas
\kappa(U)(x_1, x_2) &=& \frac{1}{(2\pi)^3} \int_{\R^3} e^{i (x_1\xi_1+ x_2\xi_2)} \hat{U}(\xi_1, \xi_2, \xi_3) d\xi_1d\xi_2d\xi_3 \\
 &=& \frac{1}{2 \pi^3} \int_{\R^3} e^{i (x_1\xi_1+ x_2\xi_2)} \f{\hat{\mu}(\xi_1, \xi_2)}{\xi_1^2 + \xi_2^2+  \xi_3^2} d\xi_1d\xi_2d\xi_3 \\
 &=&  \frac{1}{2 \pi^2} \int_{\R^2} e^{i (x_1\xi_1+ x_2\xi_2)} \f{\hat{\mu}(\xi_1, \xi_2)}{(\xi_1^2 + \xi_2^2)^{\f{1}{2}}} d\xi_1d\xi_2.
\eeas

It follows that in the Fourier space, the operator $\kappa\circ \mathcal{L}$ becomes a multiplier. In fact, we have
$$
\widehat{\kappa\circ \mathcal{L}[\mu]} (\xi_1, \xi_2) = \f{ 2 \hat{\mu}(\xi_1, \xi_2)}{(\xi_1^2+ \xi_2^2)^{\f{1}{2}}}.
$$

We denote by $V^*$ the dual space of $V$.  We establish the following result.
\begin{lem}
 The linear operator $\kappa_1\circ \mathcal{L}$ is an isometry between the space $V$ and $V^*$.
\end{lem}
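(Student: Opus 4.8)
The plan is to show that $\kappa_1 \circ \mathcal{L} : V \to V^*$ is linear, bounded, injective, and surjective, with $\|\kappa_1\circ\mathcal{L}[\mu]\|_{V^*} = \|\mu\|_V$ for all $\mu \in V$. The norm-preservation is essentially already contained in the identity (\ref{potential-identity}): for $\mu, \nu \in V$, the pairing $\langle \kappa_1\circ\mathcal{L}[\mu], \nu\rangle$ should be interpreted as $(\nu, \mathcal{L}[\mu])_\Lambda = \int_{\Lambda\times\Lambda} \frac{\mu(x)\nu(y)}{\pi|x-y|}\,dx\,dy$, which by polarization of (\ref{potential-identity}) equals the inner product associated with the energy norm. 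Hence $|\langle \kappa_1\circ\mathcal{L}[\mu],\nu\rangle| \le \|\mu\|\,\|\nu\|$ by Cauchy--Schwarz, giving $\|\kappa_1\circ\mathcal{L}[\mu]\|_{V^*}\le \|\mu\|$, and taking $\nu=\mu$ shows equality. This simultaneously gives boundedness and injectivity (if $\kappa_1\circ\mathcal{L}[\mu]=0$ then $\|\mu\|=0$).

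The remaining point is surjectivity. First I would identify the bilinear form $a(\mu,\nu) := \int_{\Lambda\times\Lambda}\frac{\mu(x)\nu(y)}{\pi|x-y|}\,dx\,dy$ with the $V$-inner product, using (\ref{potential-identity}) and the Fourier-side formula $a(\mu,\nu)=\frac{1}{2\pi^2}\int_{\R^2}\frac{\hat\mu(\xi)\overline{\hat\nu(\xi)}}{|\xi|}\,d\xi$ (for real-valued densities this is exactly the energy inner product). Then $\kappa_1\circ\mathcal{L}$ is precisely the Riesz map of the Hilbert space $V$ into its dual determined by this inner product, so it is a bijective isometry by the Riesz representation theorem. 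The only thing to check carefully is that every element $\phi \in V^*$ is indeed represented in this way, i.e. that the identification of $a(\cdot,\cdot)$ with the $V$-inner product is legitimate and that $\kappa_1\circ\mathcal{L}[\mu]$, a priori a function on $\Lambda$, does define the functional $\nu \mapsto a(\mu,\nu)$ on all of $V$ — this is where one uses that $V$ consists of distributions supported in $\bar\Lambda$ and that the duality pairing $V^*\times V$ extends the $L^2(\Lambda)$ pairing.

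The main obstacle, and the step I would spend the most care on, is the functional-analytic bookkeeping around the pairing: one must make precise in what sense $\langle \kappa_1\circ\mathcal{L}[\mu], \nu\rangle_{V^*,V}$ is defined when $\mu,\nu$ are genuine distributions (not $L^2$ functions), show the kernel integral $\int_{\Lambda\times\Lambda}\frac{\mu(x)\nu(y)}{\pi|x-y|}$ converges for $\mu,\nu\in V$ (which follows from finiteness of energy via the Fourier representation and Cauchy--Schwarz), and verify that this pairing is exactly the canonical $V^*$--$V$ duality. Once that is settled, invoking (\ref{potential-identity}) and polarization closes the argument: isometry onto its image plus the Riesz representation theorem give that $\kappa_1\circ\mathcal{L}$ is an isometric isomorphism $V \cong V^*$. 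I would also remark that the Fourier multiplier computation already established, $\widehat{\kappa\circ\mathcal{L}[\mu]}(\xi)=2\hat\mu(\xi)/|\xi|$, gives an alternative, more computational route to the same conclusion and can be used as a cross-check.
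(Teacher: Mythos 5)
Your proposal is correct, and the first half (boundedness, norm preservation, injectivity) is exactly the paper's argument: the Cauchy--Schwarz bound $|(\kappa_1\circ\mathcal{L}[\mu],\nu)|\le\|\mu\|\,\|\nu\|$ shows the image lies in $V^*$ with $\|\kappa_1\circ\mathcal{L}[\mu]\|_{V^*}\le\|\mu\|$, and the identity (\ref{potential-identity}) upgrades this to equality. Where you diverge is surjectivity. The paper argues by contradiction: since the map is an isometric embedding, its range is a closed subspace of $V^*$; if it were proper, there would exist a nontrivial $\mu\in V$ annihilating the range, i.e.\ $(\kappa_1\circ\mathcal{L}[\nu],\mu)=0$ for all $\nu\in V$, and taking $\nu=\mu$ forces $\|\mu\|^2=0$, a contradiction. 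You instead identify the bilinear form $a(\mu,\nu)=\int_{\Lambda\times\Lambda}\frac{\mu(x)\nu(y)}{\pi|x-y|}\,dx\,dy$ with the $V$-inner product (by polarization of (\ref{potential-identity}), or directly from the Fourier multiplier $2/|\xi|$) and recognize $\kappa_1\circ\mathcal{L}$ as the Riesz map of the Hilbert space $V$, so surjectivity is the Riesz representation theorem. The two routes are logically equivalent -- the paper's annihilator argument implicitly uses the same Hilbert-space duality (reflexivity of $V$, closedness of the range) that Riesz representation packages -- but yours is the more conceptual statement, while the paper's is self-contained and avoids naming the theorem; note also that the paper's contradiction step, by choosing the annihilator in $V$ rather than in $V^{**}$, quietly uses exactly the identification you flag as the delicate point, namely that the $V^*$--$V$ duality pairing restricted to the range is the kernel integral $a(\cdot,\cdot)$. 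Your cross-check via the multiplier formula $\widehat{\kappa\circ\mathcal{L}[\mu]}(\xi)=2\hat\mu(\xi)/|\xi|$ is sound and consistent with the normalization of the energy norm used in the paper.
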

\begin{proof}
We first show that
$$
\kappa_1\circ \mathcal{L}[V] \subset V^*.
$$
Indeed, for any $\nu \in V$, we have
\beas
|(\kappa_1\circ \mathcal{L}[\mu], \nu )|&=& |\int_{\Lambda\times \Lambda} \f{\mu(x)\nu(y)}{\pi |x-y|}dxdy|  \\
& \leq &
\left(\int_{\Lambda\times \Lambda} \f{\mu(x)\mu(y)}{\pi |x-y|}dxdy \right)^{\f{1}{2}}\cdot
\left(\int_{\Lambda\times \Lambda} \f{\nu(x)\nu(y)}{\pi |x-y|}dxdy \right)^{\f{1}{2}} \\
&=& \|\mu\| \cdot \|\nu\|.
\eeas
This proves that $\kappa_1\circ \mathcal{L}[\mu] \in V^*$ and $\|\kappa_1\circ \mathcal{L}[\mu]\|_{V^*} \leq \|\mu\|$. In addition, by using the identities in (\ref{potential-identity}), we see that
$$
\|\kappa_1\circ \mathcal{L}[\mu] \|_{V^*} = \|\mu\|.
$$
Thus the operator $\kappa_1\circ \mathcal{L}$ is an isometric embedding of $V$ into $V^*$, which also yields that
$\kappa_1\circ \mathcal{L}[V]$ is a closed subspace of $V^*$.
We finally show by contradiction that $\kappa_1\circ \mathcal{L}[V] = V^*$.
Assume the contrary, then there exists a nontrivial $\mu \in V$ such that
$$
( \kappa_1\circ \mathcal{L}[\nu],  \mu) =0 \quad \mbox{for all $\nu \in V$}.
$$
By taking $\nu=\mu$, we obtain
$$
( \kappa_1\circ \mathcal{L}[\mu],  \mu) = \|\mu\|^2 =0,
$$
which implies that $\mu=0$. This contradiction proves that $\kappa_1\circ \mathcal{L}[V] = V^*$. The lemma is proved.\end{proof}

\medskip

It is clear that $\kappa_1 (H^{\f{1}{2}}(\R^2)) \subset V^*$. As a consequence of the above lemma, we obtain the solvability result for following problem.
\begin{lem}
Let $f \in H^{\f{1}{2}}(\R^2)$. Then there exists an unique solution to the following problem
\[
\left\{\begin{array}{ccc}
 \Delta u &=&0  \quad \mbox{in }\,\, \R^3\backslash \Lambda, \\
   u &=&f    \quad \mbox{on }\,\, \Lambda, \\
   u(x) &\rightarrow& 0   \quad \mbox{as} \,\,|x| \rightarrow \infty.
\end{array} \right.
\]
Moreover, the solution $u$ can be written as
$u(x)= \mathcal{L}[\mu](x)$ for a uniquely determined $\mu \in V$ satisfying $\|\mu\| \leq \|f\|_{H^{\f{1}{2}}(\R^2)}$.
\end{lem}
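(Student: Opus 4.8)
The plan is to read off existence from the isometry $\kappa_1\circ\mathcal L\colon V\to V^*$ just established, and to obtain uniqueness from an energy identity in $H^1(\R^3)$. First I would place the boundary datum in the right space: since $f\in H^{1/2}(\R^2)$ one has $\kappa_1 f\in V^*$ (as already noted), and testing against $\nu\in V\subset H^{-1/2}(\R^2)$ and using the Fourier characterisations of the $V$- and $H^{1/2}(\R^2)$-norms, Cauchy--Schwarz yields the quantitative bound $\|\kappa_1 f\|_{V^*}\le\|f\|_{H^{1/2}(\R^2)}$ (this is the only point where the normalisation constants need to be tracked). By the preceding lemma, $\kappa_1\circ\mathcal L$ is an isometric isomorphism of $V$ onto $V^*$, so there is a unique $\mu\in V$ with $\kappa_1\circ\mathcal L[\mu]=\kappa_1 f$ in $V^*$, whence $\|\mu\|=\|\kappa_1\circ\mathcal L[\mu]\|_{V^*}=\|\kappa_1 f\|_{V^*}\le\|f\|_{H^{1/2}(\R^2)}$.

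Next I would set $u=\mathcal L[\mu]$ and verify that it solves the problem. By the properties of the Riesz potential recorded above, $u\in H^1(\R^3)$ and $-\Delta u=4\,\delta(x_3)\mu$ in $\mathcal D'(\R^3)$; since $\operatorname{supp}\mu\subset\bar\Lambda$, this forces $\Delta u=0$ in $\R^3\setminus\Lambda$. The kernel $|x-y|^{-1}$ is even in $x_3$ for $y\in\{x_3=0\}$, so $u$ is even in $x_3$ and its two-sided trace on $\{x_3=0\}$ is $\kappa(u)\in H^{1/2}(\R^2)$ with $\kappa_1(u)=\kappa_1\circ\mathcal L[\mu]=\kappa_1 f$ in $V^*$; since both members lie in $H^{1/2}(\Lambda)$ and the embedding $H^{1/2}(\Lambda)\hookrightarrow V^*$ is injective (test against $C_c^\infty(\Lambda)\subset V$), we obtain $u=f$ on $\Lambda$ in $H^{1/2}(\Lambda)$. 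For the decay, when $|x|$ is large the function $y\mapsto|x-y|^{-1}$ has $H^{1/2}(\Lambda)$-norm $O(1/|x|)$, hence $|u(x)|=\f1\pi\,|\langle\mu,|x-\cdot|^{-1}\rangle_\Lambda|\le C\|\mu\|/|x|\to0$; alternatively, $u\in H^1(\R^3)$ is harmonic outside the compact set $\bar\Lambda$ and therefore tends to $0$ at infinity. Thus $u$ is a solution, and the density $\mu$ representing it is uniquely determined because $\mathcal L$ is injective on $V$.

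For uniqueness of the solution itself, let $u_1,u_2\in H^1(\R^3)$ both solve the problem and put $w=u_1-u_2$; then $w\in H^1(\R^3)$, $\Delta w=0$ in $\R^3\setminus\Lambda$, and $\kappa_1 w=0$. The distribution $-\Delta w$ is supported on $\bar\Lambda\subset\{x_3=0\}$ and equals $\delta(x_3)$ times the jump $[\partial_\nu w]$ of the normal derivative across the screen, which lies in $\widetilde H^{-1/2}(\Lambda)$; pairing it with $w\in H^1(\R^3)$ gives $\int_{\R^3}|\nabla w|^2\,dx=\langle-\Delta w,\,w\rangle=\langle[\partial_\nu w],\,\kappa_1 w\rangle=0$, so $\nabla w\equiv0$ and hence $w=0$ in $H^1(\R^3)$.

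The main difficulty is not conceptual but lies in handling the open screen $\Lambda$ rigorously: one must read the Dirichlet condition as an equality of traces in $H^{1/2}(\Lambda)$, interpret the jump of the normal derivative as an element of $\widetilde H^{-1/2}(\Lambda)$, and justify the integration by parts across $\{x_3=0\}$ in that duality; this, together with the routine Fourier estimate that $\|\cdot\|_V$ dominates $\|\cdot\|_{H^{-1/2}(\R^2)}$ with the constant needed for $\|\mu\|\le\|f\|_{H^{1/2}(\R^2)}$, is where the real bookkeeping sits.
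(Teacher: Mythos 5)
Your proposal is correct and follows exactly the route the paper intends: the paper states this lemma without proof as an immediate consequence of the isometry $\kappa_1\circ\mathcal{L}\colon V\to V^*$ together with the observation $\kappa_1(H^{\f{1}{2}}(\R^2))\subset V^*$, which is precisely your existence step. Your additional verifications (trace equality on $\Lambda$, decay at infinity, and the energy argument for uniqueness of the solution in $H^1(\R^3)$) simply supply the details the paper leaves implicit, and they are sound.
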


Finally, we define the capacity of the set $\Lambda$ by
\be  \label{constant-capcity}
c_{\Lambda} = (\mathcal{L}^{-1}[1], 1).
\ee

In the case where $\Lambda=\{ x= (x_1,x_2,0): |x| \leq 1\}$, we have
$$\mathcal{L}^{-1}[1] = \left\{ \begin{array}{l} \ds  \frac{1}{\pi} (1- |x|^2)^{-1/2}, \quad |x| \leq 1,\\
\nm 0, \quad |x| >1,\end{array} \right.$$ and hence, $c_\Lambda =2$.

\begin{rmk}
Note that the definition of the capacity given by (\ref{constant-capcity}) differs from the standard one by a multiplicative constant. We use here (\ref{constant-capcity}) for the ease of presentation.
\end{rmk}

\section{A single resonator problem} \label{sec-resonator-single}

\subsection{Introduction}

Let $D$, $\Lambda$ and $\Om^{ex}$ be as in Section \ref{sec-sub-TRE}. Denote by
$\Om_{\e} = D \bigcup \Om^{ex} \bigcup \e \Lambda$.
We aim at finding the resonances and the Green function for the following problem in $\Om_{\e}$:
\bea
(\Delta + k^2)G_{\e}(x, x_0, k) =\delta(x-x_0),  \quad x \in \Om_{\e},  \label{helm1}\\
\nm
\f{\p G_{\e}}{\p \nu}(x, x_0, k) = 0, \quad x \in \left(\p \Om^{ex} \bigcup \p D \right)\backslash \Lambda_{\e}, \label{helm2}\\
\nm
 G_{\e} \mbox{ satisfies the radiation condition}.
 \label{helm3}
\eea

Denote by $\varphi_{\e}(x)= -\f{\p G_{\e}}{\p x_3}(\cdot, x_0, k)|_{\Lambda_{\e}}$,
$G_{\e}|_{\Om^{ex}} = G^{ex}_{\e}$ and $G_{\e}|_{D} = G^{in}_{\e}$. Then $G^{ex}_{\e}$ is the solution to the following
exterior problem
\[
\left\{\begin{array}{ccc}
(\Delta + k^2)G^{ex}_{\e}(x, x_0, k) &=&\delta(x-x_0),  \quad x \in \Om^{ex}, \\
\nm
\f{\p G^{ex}_{\e}}{\p \nu}(x, x_0, k) &=& 0, \quad x \in \p \Om^{ex}\backslash \Lambda_{\e}, \\
\nm
\f{\p G^{ex}_{\e}}{\p \nu}(x, x_0, k) &=& \varphi_{\e} , \quad x \in \Lambda_{\e},
\end{array} \right.
\]
while $G_{\e}^{in}$ is the solution to the following interior problem
\[
\left\{\begin{array}{ccc}
(\Delta + k^2)G^{in}_{\e}(x, x_0, k) &=&0,  \quad x \in D, \\
\nm
\f{\p G^{in}_{\e}}{\p \nu}(x, x_0, k) &=& 0, \quad x \in \p D \backslash \Lambda_{\e}, \\
\nm
\f{\p G^{in}_{\e}}{\p \nu}(x, x_0, k) &=& -\varphi_{\e}, \quad  x\in \Lambda_{\e}.
\end{array} \right.
\]

We can decompose $G^{ex}_{\e}$ as two parts: $G^{ex}$ and $F^{ex}_{\e}$. It is easy to see that
\[
\left\{\begin{array}{ccc}
(\Delta + k^2)F^{ex}_{\e}(x, x_0, k) &=&0,  \quad x \in \Om^{ex} ,\\
\nm
\f{\p F^{ex}_{\e}}{\p \nu}(x, x_0, k) &=& 0, \quad x \in \p \Om^{ex}\backslash \Lambda_{\e}, \\
\nm
\f{\p F^{ex}_{\e}}{\p \nu}(x, x_0, k) &=& \varphi_{\e}, \quad x \in \Lambda_{\e}.
\end{array} \right.
\]
Hence,
$$
F^{ex}_{\e}(x, x_0, k) = \int_{\Lambda_{\e}} G^{ex}(x, y, k) \varphi_{\e}(y)dy, \quad x\in \Om^{ex}.
$$
Similarly, we have
$$
G^{in}_{\e}(x, x_0, k) = -\int_{\Lambda_{\e}} G^{in}(x, y, k) \varphi_{\e}(y)dy, \quad x\in D.
$$
Thus, $\varphi_{\e}$ satisfies the following equation
\be
\int_{\Lambda_{\e}} \big(G^{ex}(x, y, k)+ G^{in}(x, x_0, k)\big)\varphi_{\e}(y)dy + G^{ex}(x, x_0, k)=0.
\ee
Recall that
$$
G^{ex}(x, y, k)+ G^{in}(x, x_0, k) = \f{1}{\pi|x-y|} - \f{\psi(x)\psi(y)}{k^2} +R(x, y, k).
$$
We introduce the following three integral operators:
\bea
\mathcal{L}_{\e}[f](x) &=& \int_{\Lambda_{\e}} \f{1}{\pi|x-y|} f(y)dy, \\
\mathcal{K}_{\e}[f](x) &=& \left(\int_{\Lambda_{\e}} \psi (y) f(y)dy\right) \psi(x) = \f{1}{|D|}\int_{\Lambda_{\e}} f(y)dy, \\
\mathcal{R}_{\e}[f](x) &=& \int_{\Lambda_{\e}} R(x, y, k) f(y)dy.
\eea

\begin{lem}
The perturbed Green function $G_{\e}(x, x_0, k)$ to the scattering problem (\ref{helm1}-\ref{helm3}) has the following representation
\bea
G^{ex}_{\e}(x, x_0, k) &=&  G^{ex}(x, x_0, k) + \int_{\Lambda_{\e}} G^{ex}(x, y, k) \varphi_{\e}(y)dy, \quad x\in \Om^{ex}, \label{green-exterior} \\
G^{in}_{\e}(x, x_0, k) &=& \int_{\Lambda_{\e}} G^{in}(x, y, k) \varphi_{\e}(y)dy, \quad x\in D,
\eea
where the unknown function $\varphi_{\e}(y)$ satisfies the following integral equation
\be  \label{BIE-green function}
\big(\mathcal{L}_{\e} - \f{\mathcal{K}_{\e}}{k^2} + \mathcal{R}_{\e}\big)[\varphi_{\e}](x) = -G^{ex}(x, x_0, k), \quad x\in \Lambda_{\e}.
\ee
\end{lem}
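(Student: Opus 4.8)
The plan is to make rigorous the heuristic derivation that precedes the statement and to add the converse direction, so that (\ref{BIE-green function}) becomes genuinely equivalent to the transmission problem (\ref{helm1})--(\ref{helm3}). Throughout one works under the standing restrictions $x_0\in\Om^{ex}$ (forced by the appearance of $G^{ex}(x,x_0,k)$ on the right-hand side) and $k\in W\setminus\{0\}$, so that both auxiliary problems defining $G^{ex}$ and $G^{in}$ are solvable -- the term $-\psi(x)\psi(y)/k^2$ in the expansion of $G^{in}$ recalled in Section \ref{time-reversal} being exactly what absorbs the constant Neumann eigenfunction of $D$ that obstructs solvability at $k=0$. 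For real $k$ the Green function $G_\e$ is unique by the radiation condition together with Rellich's lemma in the half-space geometry, and the general case follows by analytic continuation in $k$; hence it suffices to establish a bijective correspondence between $G_\e$ and the solutions $\varphi_\e$ of (\ref{BIE-green function}).

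First I would carry out the forward direction. Given $G_\e$, I set $\varphi_\e=-\partial_{x_3}G_\e|_{\Lambda_\e}$; since $\Lambda_\e$ lies in the interior of the connected domain $\Om_\e$, the function $G_\e$ is smooth across it and this trace is well defined (in the scaled energy space attached to $\Lambda_\e$ in Section \ref{sec-potential}, equivalently in $\widetilde{H}^{-1/2}(\Lambda_\e)$). Then $G^{ex}_\e=G_\e|_{\Om^{ex}}$ solves the exterior problem with source $\delta(x-x_0)$, homogeneous Neumann data on $\partial\Om^{ex}\setminus\Lambda_\e$, Neumann data $\varphi_\e$ on $\Lambda_\e$, and the radiation condition; subtracting $G^{ex}$ to remove the source and the homogeneous part of the boundary data, the difference $F^{ex}_\e:=G^{ex}_\e-G^{ex}$ is the unique radiating solution of the homogeneous equation whose only nonzero Neumann data is $\varphi_\e$ on $\Lambda_\e$, and Green's second identity against $G^{ex}(\cdot,y,k)$, $y\in\Lambda_\e$, together with uniqueness of the exterior Neumann problem, yields $F^{ex}_\e(x)=\int_{\Lambda_\e}G^{ex}(x,y,k)\varphi_\e(y)\,dy$. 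The same argument inside $D$ -- valid since $k^2$ is not a Neumann eigenvalue of $D$ for $k\in W\setminus\{0\}$ -- gives $G^{in}_\e(x)=-\int_{\Lambda_\e}G^{in}(x,y,k)\varphi_\e(y)\,dy$, the sign reflecting the opposite orientation of $\Lambda_\e$ viewed from $D$. Finally, $G_\e$ is continuous across the opening, so $G^{ex}_\e=G^{in}_\e$ on $\Lambda_\e$; rearranging gives $\int_{\Lambda_\e}\bigl(G^{ex}(x,y,k)+G^{in}(x,y,k)\bigr)\varphi_\e(y)\,dy=-G^{ex}(x,x_0,k)$ for $x\in\Lambda_\e$, and inserting the identity $G^{ex}(x,y,k)+G^{in}(x,y,k)=\tfrac{1}{\pi|x-y|}-\tfrac{\psi(x)\psi(y)}{k^2}+R(x,y,k)$, together with $\psi\equiv|D|^{-1/2}$, one reads off the three operators $\mathcal L_\e,\mathcal K_\e,\mathcal R_\e$ and obtains exactly (\ref{BIE-green function}).

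For the converse I would start from a solution $\varphi_\e$ of (\ref{BIE-green function}) in the same space and define $G_\e$ by (\ref{green-exterior}) on $\Om^{ex}$ and by the second representation on $D$. Then $(\Delta+k^2)G_\e=\delta(x-x_0)$ in $\Om^{ex}$, since the layer term is a smooth solution of the homogeneous equation there ($y$ running over the aperture $\Lambda_\e\subset\partial\Om^{ex}$), while $(\Delta+k^2)G_\e=0$ in $D$; the homogeneous Neumann condition on $(\partial\Om^{ex}\cup\partial D)\setminus\Lambda_\e$ and the radiation condition are inherited from $G^{ex}$ and $G^{in}$. Continuity of $G_\e$ across $\Lambda_\e$ is precisely equation (\ref{BIE-green function}), and continuity of the normal derivative across $\Lambda_\e$ follows from the single-layer jump relations: on the flat screen $\Lambda_\e$ the adjoint-double-layer (principal value) contribution vanishes, so $\partial_{x_3}F^{ex}_\e\to-\varphi_\e$ as $x_3\to0^+$, $\partial_{x_3}\bigl(-\!\int_{\Lambda_\e}G^{in}(\cdot,y,k)\varphi_\e(y)\,dy\bigr)\to-\varphi_\e$ as $x_3\to0^-$, and $\partial_{x_3}G^{ex}|_{\Lambda_\e}=0$. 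Hence the piecewise-defined $G_\e$ is a bona fide distributional solution of (\ref{helm1})--(\ref{helm3}), and by uniqueness it is the Green function, which closes the equivalence.

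The hard part will be the analysis of the single-layer-type operators on the flat aperture $\Lambda_\e$, and this is where Section \ref{sec-potential} is needed: one must check that $G^{ex}(\cdot,y,k)$ and $G^{in}(\cdot,y,k)$ with $y\in\Lambda_\e$ agree, up to a remainder smooth near $\Lambda_\e$, with twice the free-space fundamental solution, so that taking a normal-derivative trace produces exactly $\mp\varphi_\e$ with no principal-value term precisely because the surface is flat; one must pin down the correct function space for $\varphi_\e$ -- the scaled energy space of Section \ref{sec-potential}, in which $\mathcal L_\e$ is a dilate of the Riesz potential studied there, $\mathcal K_\e$ is rank one and $\mathcal R_\e$ is smoothing -- so that all the trace identities and (\ref{BIE-green function}) make sense; and one must dispose of the minor bookkeeping about excluding $k=0$, invoking uniqueness and the radiation condition in the half-space, and continuing in $k$ by analyticity. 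Once these are in place, substituting the kernel decomposition and identifying $\mathcal L_\e-\mathcal K_\e/k^2+\mathcal R_\e$ is routine.
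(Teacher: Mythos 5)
Your proposal is correct and, in its forward direction, is essentially the paper's own proof (which is just the computation preceding the lemma: set $\varphi_{\e}=-\partial_{x_3}G_{\e}|_{\Lambda_{\e}}$, represent the exterior and interior parts by the auxiliary Green functions $G^{ex}$ and $G^{in}$, impose continuity of $G_{\e}$ across the aperture, and insert the decomposition $G^{ex}+G^{in}=\tfrac{1}{\pi|x-y|}-\tfrac{\psi(x)\psi(y)}{k^2}+R$ to read off $\mathcal{L}_{\e},\mathcal{K}_{\e},\mathcal{R}_{\e}$); the converse direction and the single-layer jump-relation checks you add are extra rigor that the paper leaves implicit, since it instead establishes unique solvability of (\ref{BIE-green function}) later and uses the representation formula directly. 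One useful observation: your minus sign in the interior representation $G^{in}_{\e}=-\int_{\Lambda_{\e}}G^{in}(\cdot,y,k)\varphi_{\e}(y)\,dy$ agrees with the paper's derivation and with the integral equation itself, so the plus sign printed in the lemma statement is a typo.
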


We shall prove the existence and uniqueness of the solution to the integral equation (\ref{BIE-green function}) in the subsequent sections.

\subsection{Properties of the operator $\mathcal{L}_{\e}$}

Denote by $V_{\e}$ the space of distributions whose supports are contained in $\bar{\Lambda}_{\e}$ and whose energy is finite. To facilitate the analysis of the aforementioned operators, we introduce two scaling operators.

For each $\mu \in V_{\e}$, we define $\iota_{\e,1} [{\mu}](y)= \e \mu (\e y) \in V_{1}$.
For each $U \in V_{\e}^*$, we define $\iota_{\e, 2} [{U}](y)=  U(\e y) \in V_{1}$.

\begin{lem}
The following identities hold:
\bea
 \|\mu\|_{V_{\e}}  &=& \sqrt{\e} \|\iota_{\e,1} [{\mu}]\|_{V_{1}}, \label{identity1} \\
(\mu, U)_{\Lambda_{\e}}  &=& \e \big(\iota_{\e,1} [{\mu}], \iota_{\e, 2} [{U}] \big)_{\Lambda},\label{identity2} \\
\|U\|_{V_{\e}^*} &=& \sqrt{\e}\|\iota_{\e, 2} [{U}] \|_{V_1^*}, \label{identity3}\\
\mathcal{L}_{\e}^{-1} [U] (y) &=& \f{1}{\e} \mathcal{L}_1^{-1} \iota_{\e, 2} [{U}](\f{y}{\e}) =\iota_{\e,1}^{-1}\mathcal{L}_1^{-1} \iota_{\e, 2} [{U}].  \label{scaling}
\eea
\end{lem}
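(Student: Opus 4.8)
The four identities should all be derived by a direct change of variables $y \mapsto \e y$, combined with the Fourier-side description of the energy norm established in Section~\ref{sec-potential}. The plan is to treat them in the order \eqref{identity1}, \eqref{identity2}, \eqref{identity3}, \eqref{scaling}, since \eqref{identity2} refines \eqref{identity1}, \eqref{identity3} follows from \eqref{identity2} by duality, and \eqref{scaling} is a consequence of \eqref{identity2} together with the mapping property of $\mathcal{L}_1$.

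For \eqref{identity1}, I would start from $\|\mu\|_{V_\e}^2 = \frac{1}{2\pi^2}\int_{\R^2}\frac{|\hat\mu(\xi)|^2}{|\xi|}\,d\xi$ and compute the Fourier transform of $\iota_{\e,1}[\mu](y)=\e\mu(\e y)$. A scaling in two dimensions gives $\widehat{\iota_{\e,1}[\mu]}(\xi) = \e \cdot \e^{-2}\hat\mu(\xi/\e) = \e^{-1}\hat\mu(\xi/\e)$, so that, substituting $\eta = \xi/\e$, one gets $\|\iota_{\e,1}[\mu]\|_{V_1}^2 = \frac{1}{2\pi^2}\int \frac{\e^{-2}|\hat\mu(\eta)|^2}{\e|\eta|}\e^2\,d\eta = \e^{-1}\|\mu\|_{V_\e}^2$, which is \eqref{identity1}. (One also checks the normalization of $\iota_{\e,1}[\mu]$: since $\int_{\Lambda_\e}\mu = \hat\mu(0) = \widehat{\iota_{\e,1}[\mu]}(0) = \int_\Lambda \iota_{\e,1}[\mu]$, the scaling $\iota_{\e,1}$ preserves total mass, which is why this particular power of $\e$ is the natural one.) For \eqref{identity2}, the pairing $(\mu,U)_{\Lambda_\e} = \int_{\Lambda_\e}\mu(x)U(x)\,dx$ becomes, under $x = \e y$, $\int_\Lambda \mu(\e y)U(\e y)\e^2\,dy = \e\int_\Lambda \big(\e\mu(\e y)\big)U(\e y)\,dy = \e\,(\iota_{\e,1}[\mu],\iota_{\e,2}[U])_\Lambda$, as claimed; this should be justified first for smooth $\mu$ and then extended by the density/duality already implicit in the definition of $V_\e^*$. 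Then \eqref{identity3} is immediate: by definition $\|U\|_{V_\e^*} = \sup_{\mu\in V_\e, \mu\neq 0}\frac{|(\mu,U)_{\Lambda_\e}|}{\|\mu\|_{V_\e}}$, and substituting \eqref{identity1} and \eqref{identity2} turns this into $\sup \frac{\e\,|(\iota_{\e,1}[\mu],\iota_{\e,2}[U])_\Lambda|}{\sqrt\e\,\|\iota_{\e,1}[\mu]\|_{V_1}} = \sqrt\e\,\|\iota_{\e,2}[U]\|_{V_1^*}$, using that $\iota_{\e,1}$ is a bijection of $V_\e$ onto $V_1$.

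For \eqref{scaling} I would argue as follows. The operator $\mathcal{L}_\e$ maps $V_\e$ into $V_\e^*$ and, by the Lemma on $\kappa_1\circ\mathcal{L}$ applied on the rescaled surface $\Lambda_\e$, it is invertible. So given $U\in V_\e^*$, set $\mu = \mathcal{L}_\e^{-1}[U]\in V_\e$; I want to identify $\iota_{\e,1}[\mu]$. Writing out $\mathcal{L}_\e[\mu](x) = \int_{\Lambda_\e}\frac{\mu(y)}{\pi|x-y|}\,dy$ and substituting $y = \e\eta$, $x = \e\xi$, one finds $\mathcal{L}_\e[\mu](\e\xi) = \int_\Lambda \frac{\mu(\e\eta)}{\pi\e|\xi-\eta|}\e^2\,d\eta = \int_\Lambda\frac{\e\mu(\e\eta)}{\pi|\xi-\eta|}\,d\eta = \mathcal{L}_1[\iota_{\e,1}[\mu]](\xi)$. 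The left side is $U(\e\xi) = \iota_{\e,2}[U](\xi)$, so $\mathcal{L}_1[\iota_{\e,1}[\mu]] = \iota_{\e,2}[U]$, hence $\iota_{\e,1}[\mu] = \mathcal{L}_1^{-1}\iota_{\e,2}[U]$, i.e. $\mathcal{L}_\e^{-1}[U] = \iota_{\e,1}^{-1}\mathcal{L}_1^{-1}\iota_{\e,2}[U]$, which is the second equality in \eqref{scaling}; unwinding $\iota_{\e,1}^{-1}$ — namely $(\iota_{\e,1}^{-1}[\nu])(y) = \e^{-1}\nu(y/\e)$ — gives the first equality $\mathcal{L}_\e^{-1}[U](y) = \frac{1}{\e}\mathcal{L}_1^{-1}\iota_{\e,2}[U](y/\e)$.

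The routine computations are the changes of variables; the only point requiring a little care — and the place I would be most careful — is the passage from test functions to general elements of $V_\e$ and $V_\e^*$ in \eqref{identity2}, and the invocation of invertibility of $\mathcal{L}_\e$ on the scaled surface in \eqref{scaling}. The latter is not literally the earlier lemma (which was stated for $\Lambda$), but since $\Lambda_\e = \e\Lambda$ is again a simply connected smooth flat surface, the same proof applies verbatim; alternatively, one can bootstrap: \eqref{scaling} exhibits a candidate inverse, and checking that $\iota_{\e,1}^{-1}\mathcal{L}_1^{-1}\iota_{\e,2}$ is a two-sided inverse of $\mathcal{L}_\e$ reduces, via the conjugation identity $\mathcal{L}_\e = \iota_{\e,1}^{-1}\mathcal{L}_1\iota_{\e,2}$ just derived, to the known invertibility of $\mathcal{L}_1$ on $V_1$.
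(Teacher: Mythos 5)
Your proposal is correct and, for the three identities (\ref{identity1})--(\ref{identity3}), follows essentially the paper's own route: the Fourier scaling $\widehat{\iota_{\e,1}[\mu]}(\xi)=\e^{-1}\hat\mu(\xi/\e)$ for the energy norm, a change of variables for the pairing, and duality plus the bijectivity of $\iota_{\e,1}$ for the dual norm. The only genuine difference is in (\ref{scaling}): you obtain the conjugation identity $\iota_{\e,2}\circ\mathcal{L}_{\e}=\mathcal{L}_1\circ\iota_{\e,1}$ by changing variables directly in the kernel $\f{1}{\pi|x-y|}$, whereas the paper gets the same identity by observing that if $U=\mathcal{L}_{\e}[\mu]$ solves $-\Delta U = 4\delta(x_3)\mu$, then $\iota_{\e,2}[U]$ solves $-\Delta \iota_{\e,2}[U]=4\delta(x_3)\iota_{\e,1}[\mu]$; the PDE formulation has the mild advantage of applying verbatim to general distributional $\mu\in V_{\e}$ without a density step, while your bootstrap remark (deducing invertibility of $\mathcal{L}_{\e}$ from that of $\mathcal{L}_1$ via the conjugation) is a clean way to avoid re-proving the isometry lemma on the rescaled aperture. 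One small slip, harmless to the argument: your parenthetical claim that $\iota_{\e,1}$ preserves total mass is false with the paper's normalization, since $\widehat{\iota_{\e,1}[\mu]}(0)=\e^{-1}\hat\mu(0)$, i.e.\ $\int_{\Lambda}\iota_{\e,1}[\mu]=\e^{-1}\int_{\Lambda_{\e}}\mu$; mass preservation in two dimensions would require the weight $\e^{2}$ rather than $\e$, so this heuristic should be dropped (the correct computation you then perform does not use it).
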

\begin{proof} We first show (\ref{identity1}).
Let $\mu_1=\iota_{\e,1} [{\mu}] $.
By direct calculation, we have
$$
\hat{\mu}_1 (\xi) = \f{1}{\e} \hat{\mu}(\f{\xi}{\e}).
$$
It follows that
$$
\|\mu_1\|^2_{V_1}
=\int_{\R^2} \f{|\hat{\mu}_1(\xi)|^2}{|\xi|} d\xi
=\int_{\R^2} \f{|\hat{\mu}(\xi)|^2}{\e |\xi|} d\xi = \f{1}{\e} \|\mu\|_{V_{\e}}^2.
$$
This proves (\ref{identity1}).

The identity (\ref{identity2}) follows from a standard change of variables argument.
We now show (\ref{identity3}).
In fact, we have
\[
\begin{array}{ll}
\|U\|_{V_{\e}^*} & = \sup_{\mu: \|\mu\|\leq 1}(\mu, U)
= \sup_{\mu: \|\mu\|\leq 1}\e \big(\iota_{\e,1} [{\mu}], \iota_{\e, 2}[U]\big) \\
&=\sup_{ \tilde{\mu}: \|\tilde{\mu}\|_{V_1} \leq 1}\e \big(\tilde{\mu},  \iota_{\e, 2}[U] \big)_{\Lambda}\\
&= \sqrt{\e}\|\iota_{\e, 2} [{U}]\|_{V_1^*}.
\end{array}
\]
Finally, (\ref{scaling}) follows from the observation that if $U$ solves the equation $-\Delta U = 4\delta(x_3)\mu$, then $\iota_{\e, 2} [{U}]$ solves the equation
$-\Delta \iota_{\e, 2} [{U}] = 4\delta(x_3)\iota_{\e,1} [{\mu}]$.

This completes the proof of the lemma.\end{proof}

\medskip

\begin{lem} \label{lem-estimate-R}
The following estimate holds for the operator $\mathcal{L}_{\e}^{-1}\mathcal{R}_{\e}$:
 \be
 \|\mathcal{L}_{\e}^{-1}\mathcal{R}_{\e} \|_{\textbf{L}(V_{\e} \rightarrow V_{\e})}  \lesssim \e.
 \ee
\end{lem}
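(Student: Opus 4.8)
The plan is to reduce the bound on $\mathcal{L}_\e^{-1}\mathcal{R}_\e$ to the scaled setting on $\Lambda$, where the kernel $R(x,y,k)$ can be analyzed directly. Using the scaling identity (\ref{scaling}), for $\mu \in V_\e$ we have $\mathcal{L}_\e^{-1}\mathcal{R}_\e[\mu] = \iota_{\e,1}^{-1}\mathcal{L}_1^{-1}\iota_{\e,2}(\mathcal{R}_\e[\mu])$, and by (\ref{identity1}) the operator norm on $V_\e \to V_\e$ equals the operator norm of $\mu_1 \mapsto \mathcal{L}_1^{-1}\iota_{\e,2}(\mathcal{R}_\e[\iota_{\e,1}^{-1}\mu_1])$ on $V_1 \to V_1$. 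Since $\mathcal{L}_1^{-1}$ (equivalently $\kappa_1\circ\mathcal{L}_1$ and its inverse) is an isometry $V_1 \to V_1^*$ by the isometry lemma, it suffices to estimate $\|\iota_{\e,2}\circ\mathcal{R}_\e\circ\iota_{\e,1}^{-1}\|_{V_1 \to V_1^*}$, and further, by duality, to bound $|(\iota_{\e,2}\mathcal{R}_\e\iota_{\e,1}^{-1}[\mu_1], \nu_1)_\Lambda|$ by $\e\,\|\mu_1\|_{V_1}\|\nu_1\|_{V_1}$ for $\mu_1, \nu_1 \in V_1$.

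Next I would carry out the change of variables explicitly. Writing out $\iota_{\e,2}\mathcal{R}_\e\iota_{\e,1}^{-1}[\mu_1](x) = \int_{\Lambda} R(\e x, \e y, k)\,\mu_1(y)\,dy$ (after tracking the $\e$-factors from the two scalings), the pairing against $\nu_1$ becomes $\int_{\Lambda\times\Lambda} R(\e x, \e y, k)\,\mu_1(y)\,\nu_1(x)\,dx\,dy$. The crucial point is the behavior of $R = R^{ex} + R^{in}$ near the origin: from the preceding lemma, $R^{ex}(x,y,k) = \frac{ik}{2\pi}\int_0^1 e^{ik|x-y|t}dt$ and $R^{in}(x,y,k) = k\int_0^1 \sin(ik|x-y|t)dt + r(x,y,k)$ with $r$ smooth near $\Lambda$. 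Hence $R(\e x, \e y, k)$, for $x, y$ ranging over the fixed bounded set $\Lambda$ and $k \in W$, is a smooth function of $(x,y)$ that is $O(\e)$ — the $R^{ex}$ and first $R^{in}$ terms vanish to order $\e$ because of the $|x-y| \mapsto \e|x-y|$ rescaling, while the smooth remainder $r(\e x, \e y, k) = r(0,0,k) + O(\e)$ contributes a constant plus an $O(\e)$ piece; one checks (using $r(0,0,0)$ real, or more simply absorbing the constant) that the leading constant cancels or can be handled so that the effective bounded kernel is $O(\e)$ in the relevant norm. I should be slightly careful here: the constant term $r(0,0,k)$ does not obviously vanish, so the cleanest route is to bound $\int_{\Lambda\times\Lambda}|\text{kernel}|\,|\mu_1(x)||\nu_1(y)|$ using that a bounded kernel on $\Lambda\times\Lambda$ defines a bounded operator on $L^2(\Lambda)$, then control the $L^2$ norms by the $V_1$ norms via the embedding $V_1 \hookrightarrow L^2(\Lambda)$ (which follows from the energy estimates in Section~\ref{sec-potential}, since $\|\mu\|_{V_1}$ dominates a fractional Sobolev norm); with this, the $O(\e)$ size of the kernel transfers directly to an $O(\e)$ operator bound.

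The final step is to assemble: $\|\mathcal{L}_\e^{-1}\mathcal{R}_\e\|_{\mathbf{L}(V_\e\to V_\e)} = \|\mathcal{L}_1^{-1}(\iota_{\e,2}\mathcal{R}_\e\iota_{\e,1}^{-1})\|_{\mathbf{L}(V_1\to V_1)} \le \|\mathcal{L}_1^{-1}\|_{V_1^*\to V_1}\,\|\iota_{\e,2}\mathcal{R}_\e\iota_{\e,1}^{-1}\|_{V_1\to V_1^*} \lesssim 1\cdot\e = \e$, uniformly for $k \in W$. The main obstacle I anticipate is not the scaling bookkeeping (routine, though one must get every power of $\e$ right) but rather the treatment of the constant term $r(0,0,k)$ in $R^{in}$: establishing that it does not spoil the $O(\e)$ bound requires either a cancellation argument between the interior and exterior remainders at $k=0$, or — as I would prefer — simply observing that even a bounded $O(1)$-kernel piece, after the change of variables, is multiplied by the Jacobian factors coming from $\iota_{\e,1}$, $\iota_{\e,2}$ and the energy-norm rescaling, which together produce the saving power of $\e$. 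Tracking that this saving is genuinely $\e$ and not $\e^{1/2}$ is the one computation that deserves care; the rest follows from the isometry lemma and the smoothness of $R$ near the aperture.
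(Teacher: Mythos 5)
Your overall strategy is the same as the paper's (rescale to the unit aperture via $\iota_{\e,1},\iota_{\e,2}$, use that $\mathcal{L}_1$ is an isometry $V_1\to V_1^*$, and reduce everything to a uniform bound for the operator with kernel $R(\e x,\e y,k)$), but two steps as written do not hold up. First, the bookkeeping that you yourself flag as ``the one computation that deserves care'' is left unresolved and your displayed formula is off by the decisive factor: with $\iota_{\e,1}[\mu](y)=\e\mu(\e y)$ and $\iota_{\e,2}[U](y)=U(\e y)$ one finds $\iota_{\e,2}\mathcal{R}_{\e}\iota_{\e,1}^{-1}[\mu_1](x)=\e\int_{\Lambda}R(\e x,\e y,k)\mu_1(y)\,dy$, i.e. the scaled operator is $\e\,\tilde{\mathcal{R}}_{\e}$ with $\tilde{\mathcal{R}}_{\e}$ the kernel operator for $R(\e x,\e y,k)$. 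Once this explicit $\e$ is in place (this is exactly the paper's identity $\iota_{\e,2}\mathcal{R}_{\e}[\mu]=\e\tilde{\mathcal{R}}_{\e}[\iota_{\e,1}[\mu]]$, after which the norm identities give $\e^{3/2}\cdot\e^{-1/2}=\e$), you only need $\|\tilde{\mathcal{R}}_{\e}\|_{\mathrm{L}(V_1\to V_1^*)}=O(1)$ uniformly in $k\in W$; no $O(\e)$ smallness of the kernel and no cancellation involving $r(0,0,k)$ is required, so the whole detour about the constant term is beside the point. In your write-up the missing $\e$ and the hoped-for $O(\e)$ kernel are playing the same role, and neither is actually established.

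Second, your proposed justification of the $V_1\to V_1^*$ bound is based on a false embedding: $V_1$ is an $H^{-1/2}$-type space ($\|\mu\|^2\simeq\int|\hat\mu|^2/|\xi|\,d\xi$), so $L^2(\Lambda)\hookrightarrow V_1$ and \emph{not} $V_1\hookrightarrow L^2(\Lambda)$; for instance $\mathcal{L}^{-1}[1]=\f{1}{\pi}(1-|x|^2)^{-1/2}$ has finite energy but is not square integrable, so you cannot control $\|\mu_1\|_{L^2}$ by $\|\mu_1\|_{V_1}$, and ``bounded kernel $\Rightarrow$ bounded on $L^2$'' does not give the pairing bound you need for $H^{-1/2}$ densities. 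The correct route (the one the paper tacitly uses when it asserts $\tilde{\mathcal{R}}_{\e}:V_1\to V_1^*$ is bounded) exploits the \emph{regularity} of the kernel, not just its boundedness: $R$ is a Lipschitz function of $(x,y)$ (smooth in $|x-y|$ plus the smooth remainder $r$), hence $y\mapsto R(\e x,\e y,k)$ restricted to $\Lambda$ lies in $V_1^*$ uniformly (recall $\kappa_1(H^{1/2}(\R^2))\subset V^*$), so $|\tilde{\mathcal{R}}_{\e}[\mu_1](x)|\lesssim\|\mu_1\|_{V_1}$ together with the analogous bound on $x$-differences gives $\|\tilde{\mathcal{R}}_{\e}[\mu_1]\|_{V_1^*}\lesssim\|\mu_1\|_{V_1}$ uniformly in $\e$ and $k\in W$. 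With these two repairs your argument coincides with the paper's proof.
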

\begin{proof} Define $\tilde{R}(x, y, k)= R(\e x, \e y, k)$ and the corresponding integral operator by $\mathcal{\tilde{R}}_{\e}$. By the representation of the function $R$, we can show that the operator
$\mathcal{\tilde{R}}_{\e}$ is bounded from $V_1$ to $V_1^*$.
Note that
\beas
\iota_{\e, 2} \mathcal{R}_{\e}[\mu] (x)&=& \mathcal{R}_{\e}[\mu] (\e x) = \int_{\Lambda_{\e}} R(\e x, y, k) \mu(y)dy  \\
&=&\int_{\Lambda} R(\e x, \e y, k) \e \iota_{\e, 1}[\mu](y)dy  \\
&=& \e \mathcal{\tilde{R}}_{\e}[\iota_{\e, 1}[\mu]](x).
\eeas
Consequently,
\beas
\mathcal{L}_{\e}^{-1}\mathcal{R}_{\e}[\mu](y) &=& \f{1}{\e} \mathcal{L}_1^{-1} [ \e \mathcal{\tilde{R}_{\e}}[\iota_{\e, 1}[\mu]] ](\f{y}{\e})  \\
&=&  \mathcal{L}_1^{-1} [\mathcal{\tilde{R}_{\e}}[\iota_{\e, 1}[\mu]] ](\f{y}{\e}).
\eeas
It follows that
$$
\iota_{\e, 1}\circ \mathcal{L}_1^{-1}\circ \mathcal{R}_{\e}[\mu] = \e \mathcal{L}_1^{-1} \circ
\mathcal{\tilde{R}_{\e}}[\iota_{\e, 1}[\mu]].
$$
Thus,
\beas
\|\mathcal{L}_{\e}^{-1}\mathcal{R}_{\e}[\mu] \|_{V_{\e}}
&=& \e^{\f{3}{2}} \| \mathcal{L}_1^{-1} \circ
\mathcal{\tilde{R}_{\e}}[\iota_{\e, 1}[\mu]] \| \\
&\lesssim & \e^{\f{3}{2}} \|\mathcal{L}^{-1}_1\|_{\textbf{L}(V_1^* \rightarrow V_1)} \cdot \| \mathcal{\tilde{R}}_{\e}\|_{\textbf{L}(V_1 \rightarrow V_1^*)} \cdot \| \iota _{\e, 1}[\mu] \|_{V_1} \\
&\lesssim & \e \|\mu\|_{V_{\e}}.
\eeas

%
%
%
This completes the proof of the lemma.\end{proof}

\medskip

As a consequence of the above lemma, we have the following result.
\begin{lem}
 The operator $Id + \mathcal{L}_{\e}^{-1}\mathcal{R}_{\e}$ is invertible from $V_{\e}$ to $V_{\e}$. Moreover,
 $$
 (Id + \mathcal{L}_{\e}^{-1}\mathcal{R}_{\e})^{-1} = \sum_{n=0}^{\infty} (-\mathcal{L}_{\e}^{-1}\mathcal{R}_{\e})^n.
 $$
\end{lem}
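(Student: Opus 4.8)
The plan is to show that the operator norm $\|\mathcal{L}_{\e}^{-1}\mathcal{R}_{\e}\|_{\textbf{L}(V_{\e}\to V_{\e})}$ is strictly less than $1$ for $\e$ small, which is immediate from the preceding lemma (Lemma~\ref{lem-estimate-R}): that lemma gives $\|\mathcal{L}_{\e}^{-1}\mathcal{R}_{\e}\|_{\textbf{L}(V_{\e}\to V_{\e})}\lesssim \e$, so there is $\e_0>0$ such that this norm is $\le 1/2$ whenever $\e<\e_0$. Then I would invoke the standard Neumann series argument in the Banach algebra $\textbf{L}(V_{\e}\to V_{\e})$: if $\|A\|<1$ then $Id+A$ is invertible with $(Id+A)^{-1}=\sum_{n\ge 0}(-A)^n$, the series converging absolutely in operator norm.

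Concretely, the steps are as follows. First, fix $\e<\e_0$ so that $q:=\|\mathcal{L}_{\e}^{-1}\mathcal{R}_{\e}\|_{\textbf{L}(V_{\e}\to V_{\e})}<1$. Second, observe that the partial sums $S_N=\sum_{n=0}^{N}(-\mathcal{L}_{\e}^{-1}\mathcal{R}_{\e})^n$ form a Cauchy sequence in $\textbf{L}(V_{\e}\to V_{\e})$ because $\|(-\mathcal{L}_{\e}^{-1}\mathcal{R}_{\e})^n\|\le q^n$ and $\sum q^n<\infty$; since $\textbf{L}(V_{\e}\to V_{\e})$ is complete (as $V_{\e}$ is a Hilbert space, hence Banach), the series converges to some bounded operator $B$. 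Third, verify $B(Id+\mathcal{L}_{\e}^{-1}\mathcal{R}_{\e})=(Id+\mathcal{L}_{\e}^{-1}\mathcal{R}_{\e})B=Id$ by the telescoping identity $S_N(Id+\mathcal{L}_{\e}^{-1}\mathcal{R}_{\e})=Id-(-\mathcal{L}_{\e}^{-1}\mathcal{R}_{\e})^{N+1}$ and letting $N\to\infty$, using $\|(-\mathcal{L}_{\e}^{-1}\mathcal{R}_{\e})^{N+1}\|\le q^{N+1}\to 0$. This gives both invertibility and the claimed series formula.

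There is essentially no obstacle here: the content is entirely in Lemma~\ref{lem-estimate-R}, which has already been established, and the rest is the textbook Neumann series lemma. The only point worth a sentence of care is that the statement is implicitly understood for $\e$ sufficiently small (consistent with the phrasing elsewhere in the paper, e.g. ``for $\e$ sufficiently small''); one should either state this restriction explicitly or note that it is inherited from the hypotheses in force. One should also make sure the well-definedness of $\mathcal{L}_{\e}^{-1}$ as a bounded operator $V_{\e}^*\to V_{\e}$ — which underlies the whole composition $\mathcal{L}_{\e}^{-1}\mathcal{R}_{\e}$ being a bounded operator on $V_{\e}$ — has been recorded; this follows from the isometry lemma for $\kappa_1\circ\mathcal{L}$ together with the scaling identity (\ref{scaling}), both proved above.
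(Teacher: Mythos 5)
Your proof is correct and follows exactly the route the paper intends: the lemma is stated there as an immediate consequence of the bound $\|\mathcal{L}_{\e}^{-1}\mathcal{R}_{\e}\|_{\textbf{L}(V_{\e} \rightarrow V_{\e})} \lesssim \e$ from Lemma \ref{lem-estimate-R}, with the Neumann series argument left implicit. Your explicit filling-in (smallness of $\e$, convergence of the series by completeness, telescoping identity) is precisely the standard argument the paper relies on, and your remarks about the implicit restriction to small $\e$ and the boundedness of $\mathcal{L}_{\e}^{-1}$ are consistent with what is established earlier in the paper.
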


\subsection{Existence and asymptotic of the quasi-static resonances}

We aim to show the existence of characteristic values for the operator-valued function
\be  \label{resonance-equation1}
 \mathcal{A}_{\e}(k)[\mu] = \big(\mathcal{L}_{\e} - \f{\mathcal{K}_{\e}}{k^2} + \mathcal{R}_{\e}(k)\big)[\mu] =0
\ee
and determine its asymptotic.

We first look at a simpler problem. We define $\mathcal{J}_{\e} = \mathcal{L}_{\e} - \f{\mathcal{K}_{\e}}{k^2}$ and
$k_{0. \e, 0} = \sqrt{\f{1}{|D|} \e c_{\Lambda}}$. We investigate the characteristic values and the associated characteristic functions for the operator-valued analytic function $\mathcal{J}_{\e}$.

\begin{lem}
The operator $\mathcal{J}(k)$ has two characteristic values which are $\lambda_1= k_{0. \e, 0}$,
$\lambda_2= -k_{0. \e, 0}$. The associated characteristic functions are
$\mu_{1} =\mu_{2} =\mathcal{L}_{\e}^{-1}[\psi]{k_{0,\e,0}^2}$ after imposing the normalization condition
$(\mu, \psi) =1$.
\end{lem}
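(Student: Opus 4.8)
The plan is to use the rank-one structure of $\mathcal{K}_{\e}$ to collapse the characteristic-value problem for $\mathcal{J}_{\e}(k)=\mathcal{L}_{\e}-\mathcal{K}_{\e}/k^2$ to a single scalar equation, and then evaluate the resulting constant by the scaling identities of the previous subsection. Recall that $\psi$ is the $L^2(D)$-normalized zeroth Neumann eigenfunction of $D$, namely the constant $\psi\equiv|D|^{-1/2}$, so that $\mathcal{K}_{\e}[\mu]=(\mu,\psi)_{\Lambda_{\e}}\,\psi$ for every $\mu\in V_{\e}$, and $\psi|_{\Lambda_{\e}}\in V_{\e}^{*}$; by the scaled version of the isometry lemma, $\mathcal{L}_{\e}:V_{\e}\to V_{\e}^{*}$ is invertible, so $\mathcal{L}_{\e}^{-1}[\psi]\in V_{\e}$ is well defined. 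For $k\neq 0$, the equation $\mathcal{J}_{\e}(k)[\mu]=\mathcal{L}_{\e}[\mu]-k^{-2}(\mu,\psi)\,\psi=0$ is equivalent to $\mu=k^{-2}(\mu,\psi)\,\mathcal{L}_{\e}^{-1}[\psi]$. A nonzero $\mu$ forces $(\mu,\psi)\neq 0$, and pairing the last relation with $\psi$ gives the scalar characteristic equation $k^{2}=(\mathcal{L}_{\e}^{-1}[\psi],\psi)_{\Lambda_{\e}}$; conversely, once $k^{2}$ equals this number, $\mathcal{L}_{\e}^{-1}[\psi]$ itself lies in the kernel. Thus everything reduces to computing $(\mathcal{L}_{\e}^{-1}[\psi],\psi)_{\Lambda_{\e}}$.

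To evaluate this constant, note that since $\psi$ is constant, linearity gives $(\mathcal{L}_{\e}^{-1}[\psi],\psi)_{\Lambda_{\e}}=|D|^{-1}(\mathcal{L}_{\e}^{-1}[1],1)_{\Lambda_{\e}}$. The scaling identity (\ref{scaling}) yields $\iota_{\e,1}\mathcal{L}_{\e}^{-1}=\mathcal{L}_{1}^{-1}\iota_{\e,2}$ and $\iota_{\e,2}[1]=1$, so, combining with the energy identities (\ref{potential-identity}) and (\ref{identity1}),
\[ (\mathcal{L}_{\e}^{-1}[1],1)_{\Lambda_{\e}}=\|\mathcal{L}_{\e}^{-1}[1]\|_{V_{\e}}^{2}=\e\,\|\iota_{\e,1}\mathcal{L}_{\e}^{-1}[1]\|_{V_{1}}^{2}=\e\,\|\mathcal{L}_{1}^{-1}[1]\|_{V_{1}}^{2}=\e\,(\mathcal{L}_{1}^{-1}[1],1)_{\Lambda}=\e\,c_{\Lambda}, \]
with $c_{\Lambda}$ as in (\ref{constant-capcity}) (equivalently, one changes variables directly in $\int_{\Lambda_{\e}}\mathcal{L}_{\e}^{-1}[1]$). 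Hence $k^{2}=\e c_{\Lambda}/|D|=k_{0,\e,0}^{2}$, and the characteristic values are exactly $\lambda_{1}=k_{0,\e,0}$ and $\lambda_{2}=-k_{0,\e,0}$.

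It remains to confirm that these are the only characteristic values, that each has order one, and to identify the characteristic function. Since $\mathcal{L}_{\e}^{-1}\mathcal{K}_{\e}$ is the rank-one operator $\mu\mapsto(\mu,\psi)\,\mathcal{L}_{\e}^{-1}[\psi]$ on $V_{\e}$, one has $\mathcal{J}_{\e}(k)=\mathcal{L}_{\e}\bigl(Id-k^{-2}\mathcal{L}_{\e}^{-1}\mathcal{K}_{\e}\bigr)$, whose rank-one determinant is $1-k^{-2}(\mathcal{L}_{\e}^{-1}[\psi],\psi)=(k^{2}-k_{0,\e,0}^{2})/k^{2}$; thus $\mathcal{J}_{\e}(k)$ is invertible for every $k\notin\{0,\pm k_{0,\e,0}\}$, while at $k=\pm k_{0,\e,0}$ its kernel is one-dimensional, spanned by $\mathcal{L}_{\e}^{-1}[\psi]$, and $\mathcal{J}_{\e}(k)^{-1}$ has a simple pole, so $\lambda_{1},\lambda_{2}$ are characteristic values of order one. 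Writing the eigenrelation at $k=k_{0,\e,0}$ as $\mu=(\mu,\psi)\,\mathcal{L}_{\e}^{-1}[\psi]/k_{0,\e,0}^{2}$ and imposing the normalization $(\mu,\psi)=1$ gives $\mu_{1}=\mu_{2}=\mathcal{L}_{\e}^{-1}[\psi]/k_{0,\e,0}^{2}$, as claimed.

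The only genuinely delicate point is the scaling bookkeeping in the second step: keeping the two scaling operators $\iota_{\e,1},\iota_{\e,2}$ and the factor $\e$ in (\ref{identity1}) straight, so that $c_{\Lambda}$ emerges with precisely the right power of $\e$ (this is exactly what fixes the leading order $\sqrt{\e}$ of the quasi-static resonance). Everything else is the soft rank-one / generalized Rouch\'{e} machinery, with $k=0$ excluded from the outset since it is a pole, not a zero, of $\mathcal{J}_{\e}$.
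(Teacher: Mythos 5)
Your proposal is correct and follows essentially the same route as the paper: reduce the characteristic-value problem via the rank-one structure of $\mathcal{K}_{\e}$ to the scalar equation $k^{2}=(\mathcal{L}_{\e}^{-1}[\psi],\psi)=\frac{1}{|D|}(\mathcal{L}_{\e}^{-1}[1],1)_{\Lambda_{\e}}=\frac{\e c_{\Lambda}}{|D|}$, then read off the characteristic function from the normalization $(\mu,\psi)=1$. The only differences are that you spell out the scaling computation $(\mathcal{L}_{\e}^{-1}[1],1)_{\Lambda_{\e}}=\e c_{\Lambda}$ (which the paper invokes implicitly as $c_{\Lambda_{\e}}=\e c_{\Lambda}$) and you state the order-one/uniqueness facts explicitly, and your final expression $\mu=\mathcal{L}_{\e}^{-1}[\psi]/k_{0,\e,0}^{2}$ is the correct reading of the lemma's formula, in which the division sign is evidently missing.
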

\begin{proof}

Let $\mu \neq 0$ and $k$ be such that
$$
\mathcal{J}_{\e}(k)[\mu] = (\mathcal{L}_{\e} - \f{\mathcal{K}_{\e}}{k^2})[\mu] =\mathcal{L}_{\e} [\mu] - \f{(\mu, \psi) \psi}{k^2}=0.
$$
Then $\mu - \f{(\mu, \psi) \mathcal{L}_{\e}^{-1} [\psi] }{k^2} = 0 $.
Apply both side by $\psi$, we get
$$
(\mu, \psi) (1- \f{(\mathcal{L}_{\e}^{-1} [\psi], \psi) }{k^2}) =0.
$$
It is clear that $(\mu, \psi)\neq 0$ for otherwise it would imply that $\mathcal{L}_{\e}[\mu]=0$ which further implies that $\mu=0$. Thus, we have
$$
k^2 = (\mathcal{L}_{\e}^{-1}[\psi], \psi) = \f{1}{|D|} (\mathcal{L}_{\e}^{-1}[1], 1) =  \f{1}{|D|} c_{\Lambda_{\e}} =  \f{1}{|D|} \e c_{\Lambda}.
$$

Therefore, $\pm k_{0. \e,0}$ are the characteristic values to $\mathcal{J}_{\e}(k)$.  We now find the corresponding characteristic functions.
Recall the identity
$$
\mu - \f{(\mu, \psi) \mathcal{L}_{\e}^{-1}[\psi] }{k_{0,\e,0}^2} = 0
$$
by the normalization condition $(\mu, \psi) =1$, we obtain immediately the solution
$$
\mu = \mathcal{L}_{\e}^{-1}[\psi]{k_{0,\e,0}^2}.
$$
This completes the proof of the lemma. \end{proof}

\medskip

We now consider the resonance problem (\ref{resonance-equation1}).
Denote by $\mathcal{L}_{\e, 1} = \mathcal{L}_{\e} + \mathcal{R}_{\e}$ and
$\mathcal{B}=\mathcal{B}(k)= \mathcal{L}_{\e}^{-1}\mathcal{R}(k)$. Recall that
$\| \mathcal{B}\| \lesssim \e$.
For $\e$ sufficiently small, $\mathcal{L}_{\e, 1}$ is invertible and
\be
\mathcal{L}_{\e, 1}^{-1} = (Id + \mathcal{B})^{-1} \mathcal{L}_{\e}^{-1}
=\sum_{n=0}^{\infty}(-\mathcal{B})^n \mathcal{L}_{\e}^{-1}.
\ee
Note that $\mu \neq 0$ and $k$ satisfy (\ref{resonance-equation1}) if and only if
\be
\mu -  \f{(\mu, \psi) \mathcal{L}_{\e, 1}^{-1} [\psi] }{k^2}=0.
\ee
Apply both sides by $\psi$ and use the similar argument as before, we derive that
\be  \label{formula-a(k)}
 k^2 = \left(\mathcal{L}_{\e, 1}^{-1} [\psi], \psi\right)=\sum_{n=0}^{\infty}\left((-\mathcal{B}(k))^n \mathcal{L}_{\e}^{-1}[\psi], \psi\right).
\ee
Denote by
$$
A(k, \e) = \left(\mathcal{L}_{\e, 1}^{-1}[\psi], \psi\right)= \left(\mathcal{L}_{\e, 1}^{-1}(k) [\psi], \psi\right).
$$
Then $A(k, \e)$ is analytic in $k$ and is smooth in $\e$. We have established the following result.
\begin{lem} \label{lem-connect}
The characteristic values of the operator-valued analytic function $\mathcal{A}_{\e}(k)$ (or resonances) are zeros of the function (with variable $k$)
$$
k^2- A(k, \e) =0.
$$
\end{lem}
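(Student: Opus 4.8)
The plan is to convert the operator equation (\ref{resonance-equation1}) into a scalar equation by using that, once the invertible part $\mathcal{L}_{\e,1}=\mathcal{L}_{\e}+\mathcal{R}_{\e}(k)$ is factored out, the remaining term $\mathcal{K}_{\e}/k^{2}$ has rank one. I would begin by recording that for $k\in W\setminus\{0\}$ the operator $\mathcal{A}_{\e}(k)=\mathcal{L}_{\e,1}-\mathcal{K}_{\e}/k^{2}\colon V_{\e}\to V_{\e}^{*}$ is a holomorphic family of Fredholm operators of index zero: $\mathcal{L}_{\e,1}$ is invertible for $\e$ small (Lemma \ref{lem-estimate-R} and the lemma following it), and $\mathcal{K}_{\e}$ is of rank one, hence compact. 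Therefore a point $k\neq0$ is a characteristic value (resonance) of $\mathcal{A}_{\e}$ precisely when $\ker\mathcal{A}_{\e}(k)\neq\{0\}$, and it suffices to decide when this kernel is nontrivial.

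For the forward implication, let $\mu\in V_{\e}\setminus\{0\}$ satisfy $\mathcal{A}_{\e}(k)[\mu]=0$ with $k\neq0$. Applying $\mathcal{L}_{\e,1}^{-1}$ and using $\mathcal{K}_{\e}[\mu]=(\mu,\psi)\psi$ gives $\mu=\f{(\mu,\psi)}{k^{2}}\,\mathcal{L}_{\e,1}^{-1}[\psi]$; pairing with $\psi$ then yields $(\mu,\psi)\big(1-\f{1}{k^{2}}(\mathcal{L}_{\e,1}^{-1}[\psi],\psi)\big)=0$. Here $(\mu,\psi)\neq0$, since otherwise the previous identity forces $\mu=0$; hence $k^{2}=(\mathcal{L}_{\e,1}^{-1}[\psi],\psi)=A(k,\e)$, which is (\ref{formula-a(k)}). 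Conversely, if $k\neq0$ and $k^{2}=A(k,\e)$, set $\mu:=\mathcal{L}_{\e,1}^{-1}[\psi]$; then $(\mu,\psi)=A(k,\e)=k^{2}\neq0$, so $\mu\neq0$, and $\mathcal{A}_{\e}(k)[\mu]=\mathcal{L}_{\e,1}[\mu]-\f{(\mu,\psi)}{k^{2}}\psi=\psi-\psi=0$. Thus the nonzero characteristic values of $\mathcal{A}_{\e}(k)$ are exactly the nonzero zeros of $k\mapsto k^{2}-A(k,\e)$.

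It then remains to observe that $k=0$ plays no role. From the energy identity (\ref{potential-identity}) one has $(\mathcal{L}_{\e}^{-1}[\psi],\psi)=\f{1}{|D|}\,c_{\Lambda_{\e}}=\f{\e\,c_{\Lambda}}{|D|}>0$, and since $\|\mathcal{B}\|\lesssim\e$ this gives $A(0,\e)=\f{\e\,c_{\Lambda}}{|D|}+O(\e^{2})\neq0$ for $\e$ small; hence $k^{2}-A(k,\e)$ does not vanish at the origin, and the zero set of $k^{2}-A(k,\e)$ in $W$ coincides with the set of resonances of $\mathcal{A}_{\e}(k)$, which is the assertion. (Equivalently one may multiply (\ref{resonance-equation1}) by $k^{2}$ to obtain the holomorphic family $k^{2}\mathcal{L}_{\e,1}-\mathcal{K}_{\e}$ on all of $W$; the same positivity remark then shows that $0$ is not among its characteristic values.)

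I do not anticipate a real difficulty here: this is a bookkeeping lemma, and the central computation is already the one leading to (\ref{formula-a(k)}). The only steps that need a word of care are the Fredholm/index-zero remark — which is what allows ``characteristic value'' to be read as ``nontrivial kernel'' — and the harmless removal of $k=0$. The substantive part of the analysis, namely extracting the leading asymptotics of the roots of $k^{2}-A(k,\e)=0$ by feeding the scaling identities (\ref{identity1})--(\ref{scaling}) into the generalized Rouché theorem, is carried out in the subsequent subsections.
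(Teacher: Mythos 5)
Your proposal is correct and follows essentially the same route as the paper: invert $\mathcal{L}_{\epsilon,1}=\mathcal{L}_{\epsilon}+\mathcal{R}_{\epsilon}$, use the rank-one structure $\mathcal{K}_{\epsilon}[\mu]=(\mu,\psi)\psi$, pair with $\psi$, and note $(\mu,\psi)\neq 0$ to obtain $k^{2}=A(k,\epsilon)$. Your explicit converse (taking $\mu=\mathcal{L}_{\epsilon,1}^{-1}[\psi]$), the Fredholm remark, and the exclusion of $k=0$ simply make explicit what the paper leaves implicit, so there is no substantive difference.
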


It is clear from (\ref{formula-a(k)}) that the characteristic values (or resonances) satisfy the following equation:
$$
k^2 = (\mathcal{L}_{\e}^{-1}[\psi], \psi) + O(\e^2).
$$
We can also derive that the corresponding characteristic functions are
$$
\mu = \f{\mathcal{L}_{\e}^{-1}[\psi]}{(\mathcal{L}_{\e}^{-1}[\psi], \psi)} + O(\e).
$$

We now use the generalized Rouche's theorem to deduce rigorously the existence of characteristics values for
$ \mathcal{A}_{\e}(k)$.

\begin{lem} \label{lem-characteristic1}
 There exist two characteristic values, counting multiplicity, for the operator-value analytic function $\mathcal{A}_{\e}(k)$ in $W=\{ k\in \mathbb{C} : |k|\leq \f{1}{2} k_1\}$.
 Moreover, they have the asymptotic
 $$
 k_{0, \e} = \pm k_{0, \e , 0} + O(\e).
 $$
\end{lem}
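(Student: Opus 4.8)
The plan is to push everything through the scalar reduction already obtained in Lemma~\ref{lem-connect} and then invoke Rouch\'e's theorem. First I would observe that the estimate of Lemma~\ref{lem-estimate-R} is in fact uniform for $k\in W$ — its proof bounds $\mathcal{L}_{\e}^{-1}\mathcal{R}_{\e}(k)$ by $\|\mathcal{L}_1^{-1}\|\cdot\sup_{k\in W}\|\tilde{\mathcal{R}}_{\e}(k)\|$, and the latter is finite because $R(\cdot,\cdot,k)$ is analytic in $W$ — so for $\e$ small the operator $\mathcal{L}_{\e,1}(k)=\mathcal{L}_{\e}+\mathcal{R}_{\e}(k)$ is invertible for \emph{every} $k\in W$, with Neumann-series inverse. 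Consequently
\[
g_{\e}(k):=k^2-A(k,\e),\qquad A(k,\e)=\big(\mathcal{L}_{\e,1}(k)^{-1}[\psi],\psi\big),
\]
is a genuine scalar function, analytic in $k$ on all of $W$, whose zeros in $W$ are precisely the characteristic values of $\mathcal{A}_{\e}(k)$ there (Lemma~\ref{lem-connect}). Moreover the Neumann expansion (\ref{formula-a(k)}) gives $A(k,\e)=k_{0,\e,0}^2+\rho(k,\e)$ with $\rho$ analytic in $k$ and $|\rho(k,\e)|\lesssim\e^2$ uniformly on $W$, where $k_{0,\e,0}^2=\e\,c_{\Lambda}/|D|$.

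Next I would count and then localize the zeros of $g_{\e}$ by two applications of Rouch\'e's theorem. Comparing $g_{\e}$ with $g_0(k):=k^2-k_{0,\e,0}^2$ on $\partial W=\{|k|=\tfrac12 k_1\}$: there $|g_0(k)|\ge\tfrac14 k_1^2-k_{0,\e,0}^2\ge\tfrac18 k_1^2$ while $|g_{\e}(k)-g_0(k)|=|\rho(k,\e)|\lesssim\e^2$, so for $\e$ small $|g_{\e}-g_0|<|g_0|$ on $\partial W$ and $g_{\e}$ has exactly two zeros in $W$, counting multiplicity, since $g_0$ does (the simple zeros $\pm k_{0,\e,0}$, which lie well inside $W$ as $k_{0,\e,0}=O(\sqrt{\e})$). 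To pin them down I would repeat the comparison on the two circles $\gamma_{\pm}=\{|k\mp k_{0,\e,0}|=c\e\}$ with $c$ a fixed constant: on $\gamma_{\pm}$ one has $|g_0(k)|\ge c\e\,(2k_{0,\e,0}-c\e)\gtrsim\e^{3/2}$, which dominates $|\rho(k,\e)|\lesssim\e^2$, so $g_{\e}$ has exactly one (simple) zero inside each $\gamma_{\pm}$; since $2k_{0,\e,0}\asymp\sqrt{\e}\gg c\e$ the two circles are disjoint and contained in $W$, and by the count of two they exhaust the characteristic values. Thus the zero inside $\gamma_+$ equals $k_{0,\e,0}+O(\e)$ and the one inside $\gamma_-$ equals $-k_{0,\e,0}+O(\e)$, which is the claimed asymptotic $k_{0,\e}=\pm k_{0,\e,0}+O(\e)$. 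Alternatively one may run this on the operator level, applying the generalized Rouch\'e theorem of Gohberg--Sigal directly to $\mathcal{A}_{\e}(k)$ and $\mathcal{J}_{\e}(k)$: both are finitely meromorphic Fredholm-valued on $W$ with the same principal part $-\mathcal{K}_{\e}/k^2$ at $k=0$, and $\|\mathcal{J}_{\e}(k)^{-1}\mathcal{R}_{\e}(k)\|\lesssim\e<1$ on $\partial W$ by Sherman--Morrison applied to the rank-one operator $\mathcal{L}_{\e}^{-1}\mathcal{K}_{\e}$ together with $|k^2-k_{0,\e,0}^2|\gtrsim1$ there; this route also yields the multiplicity statement directly.

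The main obstacle is the uniformity underlying either comparison: one must verify that the quantitative hypotheses established only for ``$\e$ small'' hold along the contours with constants independent of $\e$ — invertibility of $\mathcal{L}_{\e,1}(k)$ (equivalently of $\mathcal{J}_{\e}(k)$) for all $k$ on $\partial W$ and on $\gamma_{\pm}$, with the norms of the inverses bounded independently of $\e$, and the bound $|\rho(k,\e)|\lesssim\e^2$ uniform in $k\in W$. All of this rests on the scaling identities (\ref{identity1})--(\ref{scaling}) and the analytic structure of $R(\cdot,\cdot,k)$ on $W$, so it is essentially bookkeeping once those are in place; the one genuinely delicate point is the rank-one resolvent estimate controlling $\mathcal{J}_{\e}(k)^{-1}$ away from its characteristic values, which is precisely where the explicit value $k_{0,\e,0}^2=\e\,c_{\Lambda}/|D|$ of the unperturbed ``eigenvalue'' enters. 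Once the count and the localization are in hand, reading off $k_{0,\e}=\pm k_{0,\e,0}+O(\e)$ and the associated characteristic functions $\mu=\mathcal{L}_{\e}^{-1}[\psi]/(\mathcal{L}_{\e}^{-1}[\psi],\psi)+O(\e)$ is immediate from $g_{\e}(k)=0$ and (\ref{formula-a(k)}).
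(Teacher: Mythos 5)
Your proposal is correct, but it follows a different route from the paper. The paper proves this lemma entirely at the operator level: it notes that $\mathcal{J}_{\e}(k)=\mathcal{L}_{\e}-\mathcal{K}_{\e}/k^2$ is finitely meromorphic and Fredholm on $W$, with two characteristic values $\pm k_{0,\e,0}$ and a pole of order two at $k=0$, hence full multiplicity zero in $W$; it then checks $\|\mathcal{J}_{\e}(k)^{-1}\mathcal{R}_{\e}(k)\|=O(\e)$ on $\p W$ (exactly your ``alternative'' route) and invokes the generalized Rouch\'e theorem to transfer the multiplicity count to $\mathcal{A}_{\e}(k)$, concluding there are two characteristic values counting multiplicity; the $O(\e)$ localization is left implicit there and is only made quantitative in the later scalar analysis of $k^2-A(k,\e)$. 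You instead reduce immediately to the scalar function $g_{\e}(k)=k^2-A(k,\e)$ via Lemma \ref{lem-connect} and apply the classical Rouch\'e theorem twice, once on $\p W$ against $k^2-k_{0,\e,0}^2$ for the count and once on the circles $\gamma_{\pm}$ of radius $c\e$ for the localization; your uniform bound $|A(k,\e)-k_{0,\e,0}^2|\lesssim \e^2$ on $W$ is consistent with Lemma \ref{lem-estimate-BB}, and the comparison $\e^2\ll\e^{3/2}$ on $\gamma_{\pm}$ is sound. What your route buys is an explicit, self-contained proof of the asymptotic $k_{0,\e}=\pm k_{0,\e,0}+O(\e)$ (indeed your argument would localize to $O(\e^{3/2})$ with a slightly smaller contour), which the paper's written proof does not supply at this stage; what the paper's route buys is that ``counting multiplicity'' is handled directly in the Gohberg--Sigal sense, whereas in the scalar reduction you should say a word on why the order of a zero of $k^2-A(k,\e)$ equals the null multiplicity of the corresponding characteristic value of $\mathcal{A}_{\e}(k)$ (here this follows because, after factoring out the invertible $\mathcal{L}_{\e,1}(k)$, the operator is the identity plus a rank-one analytic family whose characteristic function is exactly $1-A(k,\e)/k^2$); since you also sketch the operator-level comparison, which is the paper's argument, this caveat does not affect the correctness of your proposal.
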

\begin{proof}
Recall that the operator-valued analytic function $\mathcal{J}_{\e}(k)$ is finite meromorphic and of Fredholm type. Moreover,
it has two characteristic values $\pm k_{0, \e, 0}$, and has a pole $0$ with order two in $W$. Thus, the multiplicity of $\mathcal{J}_{\e}(k)$ is zero in $W$. Note that for $k \in W \backslash \{0, \pm k_{0, \e, 0}\}$, the operator $\mathcal{J}_{\e}(k)$ is invertible, and
$$
\mathcal{J}_{\e}(k) ^{-1} \mathcal{R}_{\e}(k) = (Id - \f{\mathcal{L}_{\e}^{-1}\mathcal{K}_{\e}}{k^2})^{-1} \mathcal{L}_{\e}^{-1} \mathcal{R}_{\e}(k).
$$
Thus,
$\|\mathcal{J}_{\e}(k) ^{-1} \mathcal{R}_{\e}(k)\|_{\textbf{L}(V_{\e} \rightarrow V_{\e})}=O(\e)$ uniformly for $k\in \p W$.

By the generalized Rouche's theorem, we can conclude that for $\e$ sufficiently small, the operator $\mathcal{A}_{\e}(k)$ has the same multiplicity as the operator $\mathcal{J}_{\e}(k)$ in $W$, which is zero.
Since $\mathcal{A}_{\e}(k)$ has a pole of order two, we derive that $\mathcal{A}_{\e}(k)$ has either one characteristic value of order two or two characteristic value of order one.
This completes the proof of the lemma. \end{proof}

\medskip

We now present a systematic way to calculate the resonances. For this purpose, we need a detailed study of the operator $\mathcal{R}(k)$. Recall that
$$
R(x, y, k)
= \f{ik}{2\pi}\int_{0}^1 e^{ik|x-y|t} d t
+ k\int_{0}^1 \sin{ik|x-y|t} d t + r(x, y, k).
$$
We can decompose $R$ into three parts, $R= R_1+  R_2 +R_3$, with
\beas
R_1 &=& R(0,0,0)= \alpha_0, \\
R_2 &=& k \f{\p R}{\p k}(0, 0, 0) = k \alpha_1, \\
R_3 &=& k r_1(x, y, k)|x-y| + x\cdot r_2(x, y, k) + y\cdot r_3(x, y, k) + k^2 r_4(x, y, k),
\eeas
where $r_1$ is smooth with respect to $|x-y|$ and $r_2, r_3, r_4$ are smooth with respect to $x$ and $y$,
and all are analytic with respect to $k$.

We denote the integral operators corresponding to the kernels $R_1$, $R_2$, $R_3$ by $\mathcal{R}_1$,
$\mathcal{R}_2$ and $\mathcal{R}_3$, respectively.

\begin{lem}
The operator $\mathcal{R}$ has the decomposition
$\mathcal{R}= \mathcal{R}_1+ \mathcal{R}_2+ \mathcal{R}_3$,
where
\beas
\mathcal{R}_1 &=& |D| \alpha_0 (\cdot, \psi) \psi, \\
\mathcal{R}_2 &=& |D| \alpha_1 k (\cdot, \psi) \psi,
\eeas
and $\mathcal{R}_3$ satisfies the estimate
$$
\|\mathcal{L}_{\e}^{-1} \mathcal{R}_3\| \lesssim \e^2 + \e \cdot k^2, \quad k \in W.
$$
\end{lem}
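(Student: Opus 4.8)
The plan is to verify the claimed closed forms for $\mathcal{R}_1$ and $\mathcal{R}_2$ by direct computation from the kernels, and then to estimate the remainder operator $\mathcal{L}_\e^{-1}\mathcal{R}_3$ by combining the kernel structure of $R_3$ with the scaling identities of the preceding lemmas. For $\mathcal{R}_1$: its kernel is the constant $\alpha_0$, so $\mathcal{R}_1[f](x)=\alpha_0\int_{\Lambda_\e}f(y)\,dy$; recalling that $\psi\equiv 1/\sqrt{|D|}$ on $\Lambda_\e$ (from the definition of $\mathcal{K}_\e$ and $G^{in}$), we have $(\,f,\psi\,)=\frac{1}{\sqrt{|D|}}\int_{\Lambda_\e}f$, hence $\int_{\Lambda_\e}f=\sqrt{|D|}\,(f,\psi)$ and $\alpha_0\int_{\Lambda_\e}f=\alpha_0\sqrt{|D|}\,(f,\psi)=|D|\,\alpha_0\,(f,\psi)\,\psi$ after writing $\sqrt{|D|}=|D|\cdot\frac{1}{\sqrt{|D|}}=|D|\psi$. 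The same manipulation with the extra factor $k$ gives $\mathcal{R}_2=|D|\,\alpha_1 k\,(\cdot,\psi)\,\psi$. These are genuinely rank-one operators, consistent with the stated forms.

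The substantive part is the bound on $\mathcal{L}_\e^{-1}\mathcal{R}_3$. I would follow the template of Lemma \ref{lem-estimate-R}: rescale via $\iota_{\e,2}$ and $\iota_{\e,1}$, so that, with $\tilde R_3(x,y,k)=R_3(\e x,\e y,k)$ and $\mathcal{\tilde R}_{3,\e}$ the associated operator on $\Lambda$, one has $\mathcal{L}_\e^{-1}\mathcal{R}_3[\mu]=\iota_{\e,1}^{-1}\,\mathcal{L}_1^{-1}\,\mathcal{\tilde R}_{3,\e}[\iota_{\e,1}[\mu]]$ together with the extra factor of $\e$ that the rescaling of $\mathcal{R}_\e$ produces, exactly as in the displayed computation for $\mathcal{R}_\e$. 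Then $\|\mathcal{L}_\e^{-1}\mathcal{R}_3\|_{\mathbf{L}(V_\e\to V_\e)}\lesssim \e\,\|\mathcal{L}_1^{-1}\|_{\mathbf{L}(V_1^*\to V_1)}\,\|\mathcal{\tilde R}_{3,\e}\|_{\mathbf{L}(V_1\to V_1^*)}$, so everything reduces to showing $\|\mathcal{\tilde R}_{3,\e}\|_{\mathbf{L}(V_1\to V_1^*)}\lesssim \e+k^2$ for $k\in W$. Here I would use the three-term structure $R_3=k r_1(x,y,k)|x-y|+x\cdot r_2+y\cdot r_3+k^2 r_4$: after rescaling, the terms carrying an explicit factor $x$ or $y$ pick up a factor $\e$ (since $|x|,|y|\lesssim 1$ on $\Lambda$ after rescaling corresponds to $|\e x|\lesssim\e$), the $k^2 r_4$ term is $O(k^2)$ with a smooth — hence $V_1\to V_1^*$ bounded — kernel, and the $k r_1|x-y|$ term is $O(k)=O(k^2/k_1)+O(\cdot)$… more carefully, on $W$ one has $|k|\le\tfrac12 k_1$ so $|k|\lesssim 1$, and the factor $|x-y|$ after rescaling is $O(\e)$, giving $O(\e)$ for that piece too. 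Collecting, $\|\mathcal{\tilde R}_{3,\e}\|_{\mathbf{L}(V_1\to V_1^*)}\lesssim \e+k^2$, which yields the claim $\|\mathcal{L}_\e^{-1}\mathcal{R}_3\|\lesssim \e^2+\e k^2$.

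The main obstacle is the boundedness claim $\mathcal{\tilde R}_{3,\e}:V_1\to V_1^*$ with the stated size, uniformly in $\e$ small and $k\in W$: one must check that each kernel in the decomposition of $R_3$ — in particular the term $k r_1(x,y,k)|x-y|$, whose kernel is only Lipschitz (not smooth) in $|x-y|$ — defines a bounded operator $V_1\to V_1^*$, and that the bound does not degenerate as $\e\to0$. For the smooth kernels this is routine (a smooth kernel on $\bar\Lambda\times\bar\Lambda$ gives a bounded map $H^{-1/2}\to H^{1/2}$, hence $V_1\to V_1^*$ since $V_1\hookrightarrow H^{-1/2}(\R^2)$ with support in $\bar\Lambda$ and $H^{1/2}$ restricted to $\bar\Lambda$ embeds in $V_1^*$). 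For the $|x-y|$ term one uses that $|x-y|$ is a weakly singular (in fact continuous) kernel and compares it with the Riesz kernel $1/|x-y|$ governing $\mathcal{L}_1$, using the Fourier-multiplier characterization of the $V_1$-norm already established in Section \ref{sec-potential}; alternatively one invokes the representation of $r$ from Lemma 2.1 together with the fact that $r$ is smooth near $\Lambda$. I would present the smooth-kernel cases quickly and spend the bulk of the argument on the $|x-y|$ term, extracting the factor $\e$ from rescaling and the factor $k$ explicitly so that the combined bound $\e+k^2$ (rather than merely $\e+k$) emerges — noting that $|k|\le\tfrac12 k_1$ absorbs the linear-in-$k$ contribution from this term into $O(\e)$, while the genuinely $O(k^2)$ contribution comes from $k^2 r_4$.
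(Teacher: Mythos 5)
Your proposal is correct and follows essentially the same route as the paper: the rank-one identities for $\mathcal{R}_1$ and $\mathcal{R}_2$ are immediate since their kernels are constants and $\psi\equiv 1/\sqrt{|D|}$, and the bound on $\mathcal{L}_{\e}^{-1}\mathcal{R}_3$ is obtained, exactly as the paper indicates, by repeating the rescaling argument of Lemma \ref{lem-estimate-R} and reading off the extra factor $\e+k^2$ from the structure $k r_1|x-y| + x\cdot r_2 + y\cdot r_3 + k^2 r_4$ of the kernel. Your additional care about the $V_1\to V_1^*$ boundedness of the rescaled kernels (in particular the $|x-y|$ term) only makes explicit what the paper leaves implicit.
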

\begin{proof}
We only need to show the estimate for $\|\mathcal{L}_{\e}^{-1} \mathcal{R}_3\|$. But this follows from an argument similar to that in the proof of Lemma \ref{lem-estimate-R}. \end{proof}

\medskip

As a consequence of the above Lemma, we see that the following identity holds for $k\in W$
$$
\mathcal{B}(k) = \mathcal{L}_{\e}^{-1} (\mathcal{R}_1 + \mathcal{R}_2) +  \mathcal{L}_{\e}^{-1}\mathcal{R}_3
= \left(\alpha_0 + k\alpha_1 \right)|D| (\cdot, \psi) \mathcal{L}_{\e}^{-1}[\psi] + O(\e^2 + \e\cdot k^2).
$$

Define
$x= (\varphi, \psi)$,
$B^{(0)} = (\mathcal{L}_{\e}^{-1}[\psi], \psi)$ and $B^{(n)} =\left( (-\mathcal{B})^n \mathcal{L}_{\e}^{-1}[\psi], \psi\right)$.

\begin{lem}  \label{lem-estimate-BB}
The following estimates hold:
$$
B^{(0)} = \f{\e c_{\Lambda}}{ |D|}, \,\,| B^{(n)}| \lesssim O(\e^{n+1}).
$$
Moreover, $B^{(1)} = B^{(1, 1)}+  B^{(1, 2)}+  B^{(1, 3)},$ where
\bea
B^{(1, 1)} &= & -\alpha_0 \left(\f{\e c_{\Lambda}}{|D|}\right)^2 \cdot |D|, \\
 B^{(1, 2)} &=& -k\alpha_1\left(\f{\e c_{\Lambda}}{|D|}\right)^2 \cdot |D|, \\
 B^{(1, 3)} &=& O(\e^3+ k^2 \e^2).
\eea
\end{lem}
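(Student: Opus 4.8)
The plan is to compute $B^{(0)}$ directly and then use the decomposition $\mathcal{B}(k) = (\alpha_0 + k\alpha_1)|D|(\cdot,\psi)\mathcal{L}_{\e}^{-1}[\psi] + O(\e^2 + \e k^2)$ established just above, together with the scaling estimates of Lemma (the one giving $\|\mathcal{L}_{\e}^{-1}\mathcal{R}_{\e}\|_{\mathbf{L}(V_{\e}\to V_{\e})} \lesssim \e$), to read off the contribution of each power of $\mathcal{B}$. First I would record $B^{(0)} = (\mathcal{L}_{\e}^{-1}[\psi],\psi)$; since $\psi = |D|^{-1/2}$ is constant on $\Lambda_{\e}$, linearity gives $(\mathcal{L}_{\e}^{-1}[\psi],\psi) = \frac{1}{|D|}(\mathcal{L}_{\e}^{-1}[1],1) = \frac{1}{|D|}c_{\Lambda_{\e}}$, and the scaling identity for capacity used already in the proof of Lemma on $\mathcal{J}_{\e}$ gives $c_{\Lambda_{\e}} = \e c_{\Lambda}$, hence $B^{(0)} = \e c_{\Lambda}/|D|$.

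Next I would treat $B^{(1)} = -(\mathcal{B}\mathcal{L}_{\e}^{-1}[\psi],\psi)$. Substituting the decomposition of $\mathcal{B}$: the rank-one piece $(\alpha_0 + k\alpha_1)|D|(\cdot,\psi)\mathcal{L}_{\e}^{-1}[\psi]$ applied to $\mathcal{L}_{\e}^{-1}[\psi]$ produces $(\alpha_0 + k\alpha_1)|D|\,(\mathcal{L}_{\e}^{-1}[\psi],\psi)\,\mathcal{L}_{\e}^{-1}[\psi]$, and pairing again with $\psi$ yields $(\alpha_0 + k\alpha_1)|D|\,(\mathcal{L}_{\e}^{-1}[\psi],\psi)^2 = (\alpha_0 + k\alpha_1)|D|\,(B^{(0)})^2 = (\alpha_0 + k\alpha_1)|D|\,(\e c_{\Lambda}/|D|)^2$. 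Separating the $\alpha_0$ and $k\alpha_1$ parts and accounting for the overall minus sign gives exactly $B^{(1,1)} = -\alpha_0(\e c_{\Lambda}/|D|)^2|D|$ and $B^{(1,2)} = -k\alpha_1(\e c_{\Lambda}/|D|)^2|D|$. The remaining term $B^{(1,3)}$ comes from the $O(\e^2 + \e k^2)$ part of $\mathcal{B}$: applying it to $\mathcal{L}_{\e}^{-1}[\psi]$ and pairing with $\psi$, and using $\|\mathcal{L}_{\e}^{-1}[\psi]\|_{V_{\e}}^2 \sim B^{(0)} = O(\e)$ by the energy identity, costs one extra factor of $\e$, giving $B^{(1,3)} = O(\e^3 + k^2\e^2)$.

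For the general bound $|B^{(n)}| \lesssim \e^{n+1}$ I would argue by Cauchy--Schwarz in $V_{\e}$: $|B^{(n)}| = |((-\mathcal{B})^n\mathcal{L}_{\e}^{-1}[\psi],\psi)| \le \|\mathcal{B}\|^n_{\mathbf{L}(V_{\e}\to V_{\e})}\,\|\mathcal{L}_{\e}^{-1}[\psi]\|_{V_{\e}}\,\|\psi\|_{V_{\e}^*}$, and $\|\mathcal{L}_{\e}^{-1}[\psi]\|_{V_{\e}}\|\psi\|_{V_{\e}^*} = (\mathcal{L}_{\e}^{-1}[\psi],\psi) = O(\e)$ again by the isometry/energy identity, while $\|\mathcal{B}\|^n \lesssim \e^n$. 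The main obstacle is bookkeeping rather than anything deep: one must keep careful track of which estimates are in the $V_{\e}$ norm versus the $V_{\e}^*$ norm and use the isometry $\kappa_1\circ\mathcal{L}$ (equivalently the energy identity) correctly to convert $\|\mathcal{L}_{\e}^{-1}[\psi]\|_{V_{\e}}\|\psi\|_{V_{\e}^*}$ into the scalar $(\mathcal{L}_{\e}^{-1}[\psi],\psi)$, and to remember that $\psi$ is constant so that $(\cdot,\psi)$-type pairings against $\mathcal{L}_{\e}^{-1}[\psi]$ collapse cleanly to powers of $B^{(0)}$. Everything else is a routine expansion of the Neumann series as in Lemma (the one on $\mathcal{L}_{\e}^{-1}\mathcal{R}_{\e}$).
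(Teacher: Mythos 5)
Your proposal is correct and follows essentially the same route as the paper: split $B^{(1)}$ according to the decomposition $\mathcal{R}=\mathcal{R}_1+\mathcal{R}_2+\mathcal{R}_3$ (equivalently the rank-one part of $\mathcal{B}$ plus the $O(\e^2+\e k^2)$ remainder), compute the rank-one contributions exactly to get $B^{(1,1)}$ and $B^{(1,2)}$, and bound $B^{(1,3)}$ and the higher terms $B^{(n)}$ by operator norms together with the energy identity $\|\mathcal{L}_{\e}^{-1}[\psi]\|_{V_{\e}}\|\psi\|_{V_{\e}^*}=(\mathcal{L}_{\e}^{-1}[\psi],\psi)=\e c_{\Lambda}/|D|$. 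This is precisely the ``direct calculation'' the paper leaves implicit (and is spelled out in its multi-resonator analogue), so no gaps.
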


\begin{proof}
It is clear that $B^{(1)}$ can be decomposed as $B^{(1)} = B^{(1, 1)}+  B^{(1, 2)}+  B^{(1, 3)} $, where
\beas
B^{(1, 1)} &=& -( \mathcal{L}_{\e}^{-1}\mathcal{R}_{1} \mathcal{L}^{-1}_{\e}[\psi], \psi), \\
B^{(1, 2)} &=& -( \mathcal{L}_{\e}^{-1}\mathcal{R}_{2} \mathcal{L}^{-1}_{\e}[\psi], \psi), \\
B^{(1, 3)} &=& -( \mathcal{L}_{\e}^{-1}\mathcal{R}_{3} \mathcal{L}^{-1}_{\e}[\psi], \psi).
\eeas
The rest of the lemma follows from a direct calculation.\end{proof}

\medskip

Recall that $k$ is the solution to the following nonlinear equation
\beas
k^2  &=&  A(k, \e)= B^{(0)}  + B^{(1, 1)} + B^{(1, 2)}+  B^{(1, 3)} + \sum_{n=2}^{\infty }B^{(n)} \\
      &=& \f{\e c_{\Lambda}}{ |D|} -\alpha_0 \left(\f{\e c_{\Lambda}}{|D|}\right)^2 \cdot |D| -
      k\alpha_1\left(\f{\e c_{\Lambda}}{|D|}\right)^2 \cdot |D| + O(\e^3)+ O(\e^2)\cdot O(k^2).
\eeas

We now solve the above nonlinear equation for $k$ in $W$.
First, by using the identity
$$
\sqrt{1+x} = 1+ \f{1}{2}x - \f{1}{4}x^2 + \ldots \quad \mbox{for}\,\,|x| < 1,
$$
we can find a matrix $F=F(k, \e)$ such that $A(k, \e)= F^2(k,\e)$. In fact,
we have
\be
F(k, \e)= \sqrt{\f{\e c_{\Lambda}}{ |D|}} \left(1- \f{1}{2}\alpha_0 \e c_{\Lambda}- \f{1}{2} \alpha_1 \e c_{\Lambda}  + O(k^2)O(\e) + O(\e^2)\right), \quad k\in W.
\ee

It is clear that the following factorization holds
$$
k^2- A(k, \e) =\big( k- F(k, \e)\big)\cdot \big( k+ F(k, \e)\big).
$$
Thus, the roots (zeros) for $k^2- A(k, \e)$ are those for $k- F(k, \e)$ and $k+ F(k, \e)$.

\begin{lem}
There exist exactly one root for each of the analytic functions $k-F(k, \e)$ and $k+F(k, \e)$ in $W$. Moreover, they have the following form
\bea
 k_{0, \e, 1} &=  k_{0, \e , 0} -\f{1}{2}\alpha_0 \sqrt{\f{c_{\Lambda}}{|D|}}c_{\Lambda} \e^{\f{3}{2}}
 -\f{1}{2} \alpha_1 \f{c_{\Lambda}^2}{|D|} \e^2 + O(\e^{\f{5}{2}}), \label{formula-k1}\\
 k_{0, \e, 2} &=  -k_{0, \e , 0} +\f{1}{2}\alpha_0 \sqrt{\f{c_{\Lambda}}{|D|}}c_{\Lambda} \e^{\f{3}{2}}
 -\f{1}{2} \alpha_1 \f{c_{\Lambda}^2}{|D|} \e^2+ O(\e^{\f{5}{2}}) . \label{formula-k2}
\eea
\end{lem}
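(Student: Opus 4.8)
The plan is to recast each equation as a scalar fixed-point problem on $W$, solve it by a contraction argument, and then bootstrap the fixed-point relation to read off the asymptotics. Since $A(k,\e)$ is analytic in $k$, smooth in $\e$, and equals $\frac{\e c_\Lambda}{|D|}\bigl(1+O(\e)\bigr)$ uniformly for $k\in W$ (this is precisely what the decomposition $A=B^{(0)}+B^{(1)}+\sum_{n\ge 2}B^{(n)}$ together with Lemma \ref{lem-estimate-BB} gives), $A$ does not vanish on $W$ for $\e$ small, so $F(k,\e)=\sqrt{A(k,\e)}$ is a genuine analytic function on $W$, the branch being pinned down by $F\approx\sqrt{\e c_\Lambda/|D|}>0$; in particular there is no branch ambiguity. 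The zeros of $k-F(k,\e)$ are exactly the fixed points of $k\mapsto F(k,\e)$ and the zeros of $k+F(k,\e)$ the fixed points of $k\mapsto -F(k,\e)$, so it suffices to analyze these two maps.

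First I would check that $F(\cdot,\e)$ sends $W$ into itself and is a contraction. From $F(k,\e)=\sqrt{\tfrac{\e c_\Lambda}{|D|}}\bigl(1+O(\e)+O(\e k^2)\bigr)$ and $|k|\le\tfrac12 k_1=O(1)$ on $W$ one gets $F(W,\e)\subset\overline B\bigl(k_{0,\e,0},C\e^{3/2}\bigr)$, which lies inside $W$ once $\e$ is small since $k_{0,\e,0}=O(\sqrt\e)$. Differentiating, $\partial_k F=\tfrac{\partial_k A}{2\sqrt A}=O(\e^2)/O(\sqrt\e)=O(\e^{3/2})$ on $W$, so $F$ is a contraction with Lipschitz constant $O(\e^{3/2})<1$; by the Banach fixed-point theorem there is a unique fixed point $k_{0,\e,1}\in W$, and necessarily $k_{0,\e,1}=k_{0,\e,0}+O(\e^{3/2})$. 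Applying the same reasoning to $-F(\cdot,\e)$, which maps $W$ into $\overline B(-k_{0,\e,0},C\e^{3/2})\subset W$, gives a unique fixed point $k_{0,\e,2}=-k_{0,\e,0}+O(\e^{3/2})$. (Equivalently, on $\partial W$ one has $|F(k,\e)|\le C\sqrt\e<\tfrac12 k_1=|k|$, so the argument principle / generalized Rouch\'e theorem yields exactly one zero of $k\mp F(k,\e)$ in $W$; since $F\ne 0$ on $W$ these two zeros are distinct and simple, and their count $2$ is consistent with Lemma \ref{lem-characteristic1}, so $\mathcal A_\e(k)$ has two simple resonances rather than one double one.)

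Next I would extract the expansion by substituting the crude estimate $k_{0,\e,1}=k_{0,\e,0}+O(\e^{3/2})$ into $k_{0,\e,1}=F(k_{0,\e,1},\e)$ together with the refined form of $F$. Writing $A(k,\e)=\tfrac{\e c_\Lambda}{|D|}-\alpha_0\tfrac{\e^2c_\Lambda^2}{|D|}-\alpha_1 k\tfrac{\e^2c_\Lambda^2}{|D|}+O(\e^3)+O(\e^2k^2)$ — the term linear in $k$ being $B^{(1,2)}$, which comes from $\mathcal R_2$ — and applying $\sqrt{1+x}=1+\tfrac12x+O(x^2)$ gives
$$
F(k,\e)=\sqrt{\tfrac{\e c_\Lambda}{|D|}}\Bigl(1-\tfrac12\alpha_0\,\e c_\Lambda-\tfrac12\alpha_1 k\,\e c_\Lambda+O(\e^2)+O(\e k^2)\Bigr).
$$
Evaluating at $k=k_{0,\e,1}$ and using $k_{0,\e,1}\sqrt{\tfrac{\e c_\Lambda}{|D|}}=k_{0,\e,0}^2+O(\e^2)=\tfrac{\e c_\Lambda}{|D|}+O(\e^2)$, together with $k_{0,\e,1}^2\e=O(\e^2)$ to absorb the $O(\e k^2)$ remainder, turns the $k$-dependent pieces into explicit powers of $\e$ and yields \eqref{formula-k1}. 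The same computation for $k_{0,\e,2}=-F(k_{0,\e,2},\e)$, now with $k_{0,\e,2}\sqrt{\tfrac{\e c_\Lambda}{|D|}}=-\tfrac{\e c_\Lambda}{|D|}+O(\e^2)$, changes the sign of the $\e^{3/2}$ term; the $\e^2$ term is unchanged because the sign flip from $-F$ and the sign flip from $k_{0,\e,2}<0$ cancel, giving \eqref{formula-k2}.

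The only step calling for genuine care is this last bootstrap: one must track precisely how the $k$-dependence of $F$ feeds back through the fixed-point relation. The $O(\e k^2)$ remainder, naively of size $\e^2$ on $W$, is harmless because after inserting $k=O(\sqrt\e)$ it is $O(\e^2)$ and contributes $O(\e^{5/2})$ once multiplied by the prefactor $\sqrt{\e c_\Lambda/|D|}$; and the $\alpha_1 k$ term, which looks like order $\e^{3/2}$, in fact enters only at order $\e^2$, since replacing $k$ by $k_{0,\e,0}$ and using $k_{0,\e,0}\sqrt{\e c_\Lambda/|D|}=\e c_\Lambda/|D|$ produces the factor $\tfrac{c_\Lambda^2}{|D|}\e^2$. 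Everything else — the analyticity of $F$, the self-map and contraction properties, and the location estimates — follows directly from the bounds on $\mathcal L_\e^{-1}\mathcal R_\e$, $\mathcal B(k)$, and $B^{(n)}$ already established.
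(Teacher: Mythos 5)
Your argument is correct, and it reaches both the root count and the four-term expansions, but it runs along a somewhat different track than the paper. You establish existence and uniqueness in $W$ by viewing the zeros of $k\mp F(k,\e)$ as fixed points of $\pm F(\cdot,\e)$ and using a contraction (or, as you note in passing, Rouch\'e against $k$ on $\p W$, since $|F|\lesssim\sqrt{\e}<\f{1}{2}k_1$ there), and you then extract the asymptotics by bootstrapping the exact relation $k=\pm F(k,\e)$ from the crude estimate $\pm k_{0,\e,0}+O(\e^{\f{3}{2}})$. The paper instead introduces the truncation $F^{(0)}(k,\e)$, which is linear in $k$, solves $k=F^{(0)}(k,\e)$ exactly via a power-series Ansatz in $\sqrt{\e}$ to get $\tau_1,\tau_2,\tau_3,\tau_4$, and then applies the generalized Rouch\'e theorem on the shrunken disk $W_1$ of radius $O(\e^{\f{5}{2}})$ around that approximate root (together with a count of roots in all of $W$) to place the true root of $k\mp F(k,\e)$ inside $W_1$. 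Your route avoids the auxiliary truncation and the $W_1$ localization, and the fixed-point identity gives the $O(\e^{\f{5}{2}})$ error by direct substitution; the paper's route is the template that carries over verbatim to the matrix-valued $F$ of Section 5, where one must also perturb eigenvectors, which is presumably why the scalar case is phrased that way. One small point of care in your write-up: the bound $\p_k F=O(\e^{\f{3}{2}})$ ``on $W$'' needs a Cauchy estimate on a slightly smaller disk (or simply the observation that every fixed point lies in the image ball $\overline{B}(\pm k_{0,\e,0},C\e^{\f{3}{2}})$, on which the contraction estimate is available); this is cosmetic, and your Rouch\'e alternative settles existence and uniqueness in $W$ in any case.
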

\begin{proof}
Define
$$
F^{(0)}(k, \e)= \sqrt{\f{\e c_{\Lambda}}{ |D|}} \left(1- \f{1}{2}\alpha_0 \e c_{\Lambda}
 -\f{1}{2} \alpha_1k\e c_{\Lambda} \right).
$$
By Lemma \ref{lem-estimate-BB}, we see that
$$
F(k, \e) = F^{(0)}(k, \e) + O(k^2)O(\e) + O(\e^2), \quad k\in W.
$$

We first solve the roots for the function
$k- F^{(0)}(k, \e)$. Since the function $k- F^{(0)}(k, \e)$ is linear in $k$,
it is clear that there exists a unique root. To find it,
we use the following Ansatz
$$
k_{0, \e}= \tau_1\e^{\f{1}{2}} + \tau_2 \e+ \tau_3 \e^{\f{3}{2}}+\tau_4\e^2+ \ldots.
$$

We obtain the following equation
\[
\begin{array}{ll}
 &\tau_1\e^{\f{1}{2}} + \tau_2 \e+ \tau_3 \e^{\f{3}{2}}+\tau_4\e^2+ O(\e^{\f{5}{2}})  \\
=& \sqrt{\f{\e c_{\Lambda}}{|D|}} - \f{1}{2}\alpha_0 \sqrt{\f{c_{\Lambda}}{|D|}}c_{\Lambda}\e^{\f{3}{2}}  -
     \f{1}{2}\alpha_1 (\tau_1\e^{\f{1}{2}}+O(\e)) \sqrt{\f{ c_{\Lambda}}{|D|}} c_{\Lambda}\e^2,
        \end{array}
\]
which yields
\[
\begin{array} {ll}
\tau_1^2&= \f{ c_{\Lambda}}{|D|},  \\
\tau_2&= 0, \\
\tau_3 &= -\f{1}{2}\alpha_0\sqrt{\f{ c_{\Lambda}}{|D|}}c_{\Lambda} , \\
\tau_4 &= -\f{1}{2} \tau_1 \alpha_1 \sqrt{\f{ c_{\Lambda}}{|D|}}c_{\Lambda}=
-\f{\alpha_1 c_{\Lambda}^2}{2|D|}.
\end{array}
\]
Thus, we see that the root can be written as
$\lambda_*= \tau_1\e^{\f{1}{2}} + \tau_2 \e+ \tau_3 \e^{\f{3}{2}}+\tau_4\e^2+ O(\e^{\f{5}{2}})$.
By using Rouche's theorem, we can conclude that for sufficiently small $\e$, there exists one for the matrix-valued analytic function $k- F(k, \e)$  in the neighborhood $W_1=\{k: |k -(\tau_1\e^{\f{1}{2}} + \tau_2 \e+ \tau_3 \e^{\f{3}{2}}+\tau_4\e^2) |\leq  t \e^{\f{5}{2}}\}$ for sufficiently large $t$.  On the other hand, it is also clear that there exists only one root for $k- F(k, \e)$ in the region $W$.
Therefore, we can conclude that the unique root of $k- F(k, \e)$ in $W$ is actually in $W_1$ and it has the following form
$$
 k_{0, \e, 1} =  k_{0, \e , 0} -\f{1}{2}\alpha_0\sqrt{\f{ c_{\Lambda}}{|D|}}c_{\Lambda}\e^{\f{3}{2}}
 -\f{c_{\Lambda}^2}{2|D|}\alpha_1\e^2 + O(\e^{\f{5}{2}}).
$$
Similarly, we can show that the unique root of $k+ F(k, \e)$ in $W$ has the following form
$$
 k_{0, \e, 2} =  -k_{0, \e , 0}
 +\f{1}{2}\alpha_0\sqrt{\f{ c_{\Lambda}}{|D|}}c_{\Lambda}\e^{\f{3}{2}}
 -\f{c_{\Lambda}^2}{2|D|}\alpha_1\e^2 + O(\e^{\f{5}{2}}).
$$
This completes the proof of the lemma.\end{proof}

\medskip

As a consequence of the above lemma, we immediately obtain  the following conclusion.

\begin{lem}  \label{lem-resonance}
 There exist exactly 2M characteristic values for the analytic function $k^2-A(k, \e)$ in $W$. They have the representation
 (\ref{formula-k1}) and (\ref{formula-k2}).
\end{lem}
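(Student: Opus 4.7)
This lemma is essentially a packaging of Lemma \ref{lem-connect} together with the preceding root-finding lemma into a statement about the characteristic values of $\mathcal{A}_{\e}(k)$; it requires only a clean application of the generalized Rouch\'e theorem on $\partial W$ to rule out stray zeros of $k^{2}-A(k,\e)$ outside the small neighborhoods where the ansatz roots were located.

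First, by Lemma \ref{lem-connect} the characteristic values of the operator-valued analytic function $\mathcal{A}_{\e}(k)$ in $W$ coincide, with multiplicity, with the zeros in $W$ of the scalar analytic function $k^{2}-A(k,\e)$. Using $A(k,\e)=F(k,\e)^{2}$ with the $F$ constructed above via the convergent square-root series, one has the factorization
\[
k^{2}-A(k,\e) = \big(k-F(k,\e)\big)\cdot\big(k+F(k,\e)\big),
\]
so it suffices to count the zeros of each factor separately in $W$.

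Second, the previous lemma has already produced one zero of $k-F(k,\e)$ and one of $k+F(k,\e)$ inside small disks $W_{1}^{\pm}$ centered at the approximate values $\pm k_{0,\e,0}+\tau_{3}\e^{3/2}+\tau_{4}\e^{2}$, with the expansions (\ref{formula-k1}) and (\ref{formula-k2}). To upgrade these local existence statements to global counts in $W$, the plan is to compare the full functions $k\mp F(k,\e)$ with the linear-in-$k$ surrogates $k\mp F^{(0)}(k,\e)$ on $\partial W$. On $\partial W$ one has $|k|=k_{1}/2$ while $|F^{(0)}(k,\e)|=O(\sqrt{\e})$, so $|k\mp F^{(0)}(k,\e)|$ is bounded below by a positive constant; on the other hand the error $|F(k,\e)-F^{(0)}(k,\e)|=O(\e^{3/2})$ coming from the $O(k^{2})O(\e)+O(\e^{2})$ remainder in the expression for $F$ is arbitrarily small for $\e$ small. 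Generalized Rouch\'e then implies that $k\mp F(k,\e)$ and $k\mp F^{(0)}(k,\e)$ have the same zero count in $W$, and since the surrogate is linear in $k$ the count is exactly one. Summing the two factors yields exactly $2M=2$ zeros of $k^{2}-A(k,\e)$ in $W$ for this single-resonator setting, all of order one, with the representations (\ref{formula-k1}) and (\ref{formula-k2}) inherited from the preceding lemma.

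The principal subtlety is the passage from the local ansatz-based root location of the previous lemma to the global zero count in all of $W$ claimed here: a priori a secondary zero of $k\mp F(k,\e)$ could hide outside $W_{1}^{\pm}$ but still inside $W$. The Rouch\'e comparison with the linear surrogate $F^{(0)}$ closes this gap cleanly, because the zero count for a linear function in $W$ is trivial. Once that global uniqueness is in place, no further work is needed: the two characteristic values are precisely the ones already exhibited in (\ref{formula-k1}) and (\ref{formula-k2}).
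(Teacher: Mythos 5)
Your proposal is correct and takes essentially the same route as the paper: factor $k^2-A(k,\e) = (k-F(k,\e))(k+F(k,\e))$ and count roots of each factor using the preceding lemma, whose global-uniqueness step (``it is also clear that there exists only one root for $k-F(k,\e)$ in the region $W$'') you usefully make explicit via Rouch\'e on $\partial W$ against the linear surrogate $k\mp F^{(0)}(k,\e)$. One small note: the invocation of Lemma \ref{lem-connect} is superfluous here, since the present statement concerns only the scalar analytic function $k^2-A(k,\e)$; the passage back to the operator-valued $\mathcal{A}_{\e}(k)$ is what Lemma \ref{lem-connect} supplies for the Proposition that follows, not for this lemma.
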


Finally, combining Lemmas \ref{lem-connect}, \ref{lem-characteristic1}, and \ref{lem-resonance}, we conclude that the following proposition holds.

\begin{prop}
 There exist exactly two characteristic values for the operator-value analytic function $\mathcal{A}_{\e}(k)$  in the neighborhood $W$. Moreover, they have the following asymptotics
\[
\begin{array} {ll}
 k_{0, \e, 1} &=  k_{0, \e , 0} -\f{1}{2}\alpha_0\sqrt{\f{ c_{\Lambda}}{|D|}}c_{\Lambda}\e^{\f{3}{2}}
 -\f{c_{\Lambda}^2}{2|D|}\alpha_1\e^2 + O(\e^{\f{5}{2}}),\\
 k_{0, \e, 2} &=  -k_{0, \e , 0} +\f{1}{2}\alpha_0\sqrt{\f{ c_{\Lambda}}{|D|}}c_{\Lambda}\e^{\f{3}{2}}
 -\f{c_{\Lambda}^2}{2|D|}\alpha_1\e^2 + O(\e^{\f{5}{2}}).
 \end{array}
\]
Furthermore, under the normalization condition $(\mu, \psi)=1$, the corresponding characteristic functions are gives by
\[
\begin{array} {ll}
\mu_1 &= \f{\mathcal{L}_{\e}^{-1}[\psi]}{(\mathcal{L}_{\e}^{-1}[\psi], \psi)} + O(\e), \\
\mu_2 &= \f{\mathcal{L}_{\e}^{-1}[\psi]}{(\mathcal{L}_{\e}^{-1}[\psi], \psi)} + O(\e).
 \end{array}
\]

\end{prop}

\subsection{The inhomogeneous problem }

We now are ready to solve the inhomogeneous problem
\be  \label{resonance-equation2}
 \mathcal{A}_{\e}(\mu)= (\mathcal{L}_{\e} - \f{\mathcal{K}_{\e}}{k^2} + \mathcal{R}_{\e})[\mu] =f,
\ee
where $f\in \mathcal{C}^2$ with $\|f\|_{\mathcal{C}^2} \leq O(1)$ and $k\in (\R \backslash\{0\}) \bigcap W$.
Let $\mu$ be the solution. Then,
\be \label{equ-mu1}
\mu - \f{(\mu, \psi)  \mathcal{L}_{\e, 1}^{-1}[\psi]}{k^2} = \mathcal{L}_{\e, 1}^{-1}[f].
\ee
Multiplying both sides by $\psi$, we obtain
$$
(\mu, \psi) = \f{(\mathcal{L}_{\e, 1}^{-1}[f], \psi)}{ 1- \f{A(k, \e)}{k^2}}
= (\mathcal{L}_{\e, 1}^{-1}[f], \psi)\f{k^2}{ k^2- A(k, \e)}.
$$

We need to find the inverse of $k^2-A(k, \e)$.

\begin{lem} \label{lem-inverse}
The inverse of the function $k^2-A(k, \e)$ has the following representation:
$$
\f{1}{k^2 - A(k, \e)} =  \f{1} {(k- k_{0,\e,1})(k- k_{0,\e,2})}\big(1+ O(\e)\big),
$$
where $O({\e})$ term is a smooth function in the variables $\sqrt{\e}$ and $k$ and is of the order of ${\e}$ for $k \in W$.
\end{lem}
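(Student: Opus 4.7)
The plan is to exploit the factorization $k^2 - A(k,\e) = (k - F(k,\e))(k + F(k,\e))$ already established, and to peel off a linear factor at each root via a divided-difference identity. First note that $\mathcal{L}_{\e,1}(k)$ depends analytically on $k$ and is invertible on $W$ for $\e$ small (by the Neumann series constructed earlier), so the scalar $A(k,\e) = (\mathcal{L}_{\e,1}^{-1}[\psi],\psi)$ is analytic in $k$ on $W$, and stays uniformly bounded away from zero there; hence $F(k,\e) = \sqrt{A(k,\e)}$ is itself analytic in $k$ on $W$, with coefficients that are smooth in $\sqrt{\e}$.

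The key step is to use $F(k_{0,\e,1},\e) = k_{0,\e,1}$ to write
$$k - F(k,\e) = (k - k_{0,\e,1}) - \bigl(F(k,\e) - F(k_{0,\e,1},\e)\bigr) = (k - k_{0,\e,1})\,Q_1(k,\e),$$
with
$$Q_1(k,\e) = 1 - \int_0^1 \partial_k F\bigl(k_{0,\e,1} + s(k - k_{0,\e,1}),\e\bigr)\,ds,$$
and similarly $k + F(k,\e) = (k - k_{0,\e,2})\,Q_2(k,\e)$. Both $Q_j$ are analytic in $k$ on $W$ and inherit joint smoothness in $\sqrt{\e}$ from the corresponding expansion of $F$.

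Next I would estimate $Q_j$. Differentiating the explicit formula
$$F(k,\e) = \sqrt{\f{\e c_\Lambda}{|D|}}\left(1 - \f{\alpha_0 \e c_\Lambda}{2} - \f{\alpha_1 k\, \e c_\Lambda}{2} + O(k^2)O(\e) + O(\e^2)\right)$$
in $k$ gives $\partial_k F(k,\e) = O(\e^{3/2})$ uniformly for $k \in W$, so $Q_j(k,\e) = 1 + O(\e^{3/2})$ uniformly in $W$, in particular bounded away from zero for $\e$ small. Combining the two factorizations,
$$\f{1}{k^2 - A(k,\e)} = \f{1}{(k - k_{0,\e,1})(k - k_{0,\e,2})} \cdot \f{1}{Q_1(k,\e)\,Q_2(k,\e)},$$
and a geometric expansion of $1/(Q_1 Q_2)$ produces a smooth factor of the form $1 + O(\e)$ with the required joint smoothness in $(\sqrt{\e},k)$, which is exactly the claim.

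The main obstacle is not computational but bookkeeping: first, confirming that $F$ really is analytic on all of $W$ (so that the $Q_j$ are analytic rather than merely continuous), which requires the invertibility of $\mathcal{L}_{\e,1}(k)$ and the non-vanishing of $A(k,\e)$ uniformly on $W$; and second, verifying that all $O$-terms in the divided-difference formula are uniform for $k \in W$, so that the final error estimate is genuinely of size $\e$. Both follow from Lemma \ref{lem-estimate-BB} combined with the Neumann-series representation of $\mathcal{L}_{\e,1}^{-1}$, which yields a uniformly bounded and analytic expression for $F$ on $W$ when $\e$ is small.
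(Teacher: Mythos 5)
Your proposal is correct and rests on the same backbone as the paper's proof: both start from the factorization $k^2-A(k,\e)=(k-F(k,\e))(k+F(k,\e))$ and peel off the linear factor at each root. Where you differ is in how the cofactor is controlled. The paper writes $k-F(k,\e)=(k-k_{0,\e,1})(1+h(k,\e))$ by analyticity (Weierstrass-type division at the unique root), then Taylor-expands $h$ in powers of $\sqrt{\e}$ and matches coefficients to conclude $h_0=h_1=0$, hence $h=O(\e)$; you instead represent the cofactor explicitly as $Q_1(k,\e)=1-\int_0^1 \p_k F\bigl(k_{0,\e,1}+s(k-k_{0,\e,1}),\e\bigr)\,ds$ and bound $\p_k F=O(\e^{\f{3}{2}})$ from the expansion of $F$. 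Your route is more quantitative (it even gives $1+O(\e^{\f{3}{2}})$, stronger than the stated $1+O(\e)$) and avoids the coefficient-matching step, but it carries one obligation the paper's argument sidesteps: you must know that differentiating the remainder $O(k^2)O(\e)+O(\e^2)$ in $k$ does not lose an order. This does hold, but the clean justification is a Cauchy estimate on a slightly smaller (or slightly enlarged) disk, using that the Neumann-series terms $B^{(n)}(k)$ and hence $A(k,\e)$ are analytic in $k$ with bounds $O(\e^{n+1})$ uniform on $W$ — the same device the paper uses later when estimating $\Im G^{ex}_{\e,4}$ — rather than ``differentiating the explicit formula'' as written. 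Two minor wording points: $A(k,\e)=\f{\e c_{\Lambda}}{|D|}(1+O(\e))$ is of size $\e$, so it is not bounded away from zero uniformly in $\e$; what your quotient $\p_k A/(2\sqrt{A})$ actually needs is $\sqrt{A}\asymp\sqrt{\e}$, which is exactly what this expansion gives. And the smoothness of $Q_j$ in $\sqrt{\e}$ uses that $k_{0,\e,j}$ depends smoothly on $\sqrt{\e}$, the same fact the paper invokes for $h$. With these points made explicit, your argument is a complete and slightly sharper alternative to the paper's proof.
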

\begin{proof}
Recall that $k^2-A(k, \e)=(k-F(k, \e))(k+F(k, \e))$. We need only consider the function $k-F(k, \e)$ and $k+F(k, \e)$.
We first investigate the function  $k-F(k, \e)$. Note that $k_{0, \e, 1}$ is the unique root of the function (with respect to the variable $k$)
$$
k- F(k, \e) =0
$$
in the disk $\{|k|\leq \f{1}{2}k_1\}$ for $\e$ small enough. Thus, the function $k- F(k, \e)$ can be written in the form
\be  \label{equ-h}
k- F(k, \e)= (k-k_{0, \e, 1} )(1+ h(k, \e))
\ee
for some analytic function $h$ in $k$. Note that the two functions $k- F(k, \e)$ and $k-k_{0, \e, 1}$  are smooth with respect to the variable $\sqrt{\e}$. We can conclude that the function $h=h(k, \e)$ is also smooth with respect to $\sqrt{\e}$.
By using the Taylor expansion, we can write $h$ in the following form
$$
h(k, \e) = h_0(k) + h_1(k)\sqrt{\e} + h_2(k, \sqrt{\e}) \e,
$$
where the functions $h_0(k)$ and $h_1(k)$ are analytic in $k$ and the function $h_2(k,t)$ is analytic in $k$ and is smooth in $t$. By comparing the coefficients of different orders of $\sqrt{\e}$ on both sides of the equation (\ref{equ-h}), we can deduce that $h_0(k)=h_1(k)=0$.
Therefore, we can conclude that $h= h_2(k, \sqrt{\e})= O(\e)$.

Similarly, we can prove that
$$
k+ F(k, \e)= (k-k_{0, \e, 2} )(1+O(\e)).
$$

Therefore, we can conclude that
\beas
\f{1}{k^2-A(k, \e)}=\f{1}{k-F(k, \e)}\f{1}{k+F(k, \e)}
&=& \f{1}{(k-k_{0, \e, 1} )(k-k_{0, \e, 2} ) } \f{1}{(1+ O(\e))(1+ O(\e))}\\
&=&\f{1} {(k- k_{0,\e,1})(k- k_{0,\e,2})}\big(1+ O({\e})\big),
\eeas
which completes the proof of the lemma.
\end{proof}

\begin{lem}  \label{lem-estimate-f}
We have $$
\mathcal{L}_{\e, 1}^{-1}[f] =
  f(0)\mathcal{L}_{\e}^{-1}[1] + O(\e^{\f{3}{2}})
 \quad \mbox{in $V_{\e}$},
$$
where the $O(\e^{\f{3}{2}})$ terms can be controlled  by $\|f\|_{\mathcal{C}^2}$.
\end{lem}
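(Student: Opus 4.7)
\medskip

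\textbf{Proof plan.} The plan is to expand $\mathcal{L}_{\e,1}^{-1}$ as a Neumann series, extract the leading term $\mathcal{L}_{\e}^{-1}[f]$, replace $f$ by $f(0)$ at leading order using a Taylor expansion, and show that every other contribution is $O(\e^{3/2})$ in $V_{\e}$.

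First, recall that $\mathcal{L}_{\e,1}=\mathcal{L}_{\e}+\mathcal{R}_{\e}$ and the Neumann series
\[
\mathcal{L}_{\e,1}^{-1}=\sum_{n=0}^{\infty}\bigl(-\mathcal{L}_{\e}^{-1}\mathcal{R}_{\e}\bigr)^{n}\mathcal{L}_{\e}^{-1},
\]
which converges in $\mathbf{L}(V_{\e}^{*},V_{\e})$ thanks to Lemma \ref{lem-estimate-R}. Thus it suffices to treat the $n=0$ term and to control the tail $\sum_{n\ge 1}(\cdot)^{n}\mathcal{L}_{\e}^{-1}[f]$ separately.

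For the leading term, write $f=f(0)+g$ with $g(x)=f(x)-f(0)$. The plan is to use the scaling identities (\ref{identity1}), (\ref{identity3}), and (\ref{scaling}) together with the uniform boundedness of $\mathcal{L}_{1}^{-1}:V_{1}^{*}\to V_{1}$ to transfer estimates to the fixed reference domain $\Lambda$. Concretely, the scaling identities give $\|\mathcal{L}_{\e}^{-1}[U]\|_{V_{\e}}\lesssim\|U\|_{V_{\e}^{*}}$ uniformly in $\e$ (the $\sqrt{\e}$ from (\ref{identity1}) and the $1/\sqrt{\e}$ from (\ref{identity3}) cancel), and $\|g\|_{V_{\e}^{*}}=\sqrt{\e}\,\|\iota_{\e,2}[g]\|_{V_{1}^{*}}$. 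Since $\iota_{\e,2}[g](x)=f(\e x)-f(0)=O(\e)\|f\|_{\mathcal{C}^{1}}$ smoothly on $\Lambda$, standard Sobolev embeddings (into $V_{1}^{*}$) yield $\|\iota_{\e,2}[g]\|_{V_{1}^{*}}\lesssim\e\,\|f\|_{\mathcal{C}^{2}}$. Hence
\[
\bigl\|\mathcal{L}_{\e}^{-1}[g]\bigr\|_{V_{\e}}\lesssim\e^{3/2}\,\|f\|_{\mathcal{C}^{2}},
\]
which produces exactly the claimed residual together with the main term $f(0)\mathcal{L}_{\e}^{-1}[1]$.

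For the tail $n\ge 1$, one uses Lemma \ref{lem-estimate-R} which gives $\|\mathcal{L}_{\e}^{-1}\mathcal{R}_{\e}\|_{\mathbf{L}(V_{\e}\to V_{\e})}\lesssim\e$, combined with the a priori bound $\|\mathcal{L}_{\e}^{-1}[f]\|_{V_{\e}}\lesssim\|f\|_{V_{\e}^{*}}=\sqrt{\e}\,\|\iota_{\e,2}[f]\|_{V_{1}^{*}}\lesssim\sqrt{\e}\,\|f\|_{\mathcal{C}^{2}}$. Therefore each term with $n\ge 1$ is of size $\e^{n}\cdot\sqrt{\e}=O(\e^{3/2})$, and summing the geometric tail keeps the bound. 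Combining with the leading-order analysis yields the desired expansion in $V_{\e}$.

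The main technical obstacle is the estimate $\|\mathcal{L}_{\e}^{-1}[f-f(0)]\|_{V_{\e}}=O(\e^{3/2})$: one must correctly track the powers of $\e$ produced by the two scaling identities, and in particular verify that the smallness of $f-f(0)$ on the shrinking aperture $\Lambda_{\e}$ (pointwise $O(\e)$) translates, after the $V_{1}^{*}$-rescaling, to an extra full factor of $\e$ beyond the generic $\sqrt{\e}$ volume factor. Once this is in hand, the remainder of the proof is a bookkeeping exercise.
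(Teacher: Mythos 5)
Your proposal is correct and follows essentially the same route as the paper: a Neumann series for $\mathcal{L}_{\e,1}^{-1}$ using $\|\mathcal{L}_{\e}^{-1}\mathcal{R}_{\e}\|\lesssim\e$, a Taylor split of $f$ at the origin, and the scaling identities (\ref{identity1})--(\ref{scaling}) to show the remainder is $O(\e^{\f{3}{2}})$ in $V_{\e}$ (the paper writes $f=f(0)+xg(x)$ and rescales $xg$ directly, which is the same bookkeeping you do via the uniform bound on $\mathcal{L}_{\e}^{-1}:V_{\e}^{*}\to V_{\e}$ and the $V_1^{*}$-smallness of $f(\e\cdot)-f(0)$).
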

\begin{proof}

We write $f(x)= f(0)+ x g(x)$ for some smooth function $g \in \mathcal{C}^1$.
Recall that
$$
\mathcal{L}_{\e, 1}^{-1} = \sum_{n\geq 0} {\mathcal{B}}^n \mathcal{L}_{\e}^{-1},
$$
where $\|\mathcal{B}\| \leq O(\e)$.
We need only to show that
$\mathcal{L}_{\e, 1}^{-1}[xg] = O(\e^{\f{3}{2}})$ in $V_{\e}$.

It suffices to show that
$$
\|\mathcal{L}_{\e}^{-1} [xg]\|_{V_{\e}} \lesssim O(\e^{\f{3}{2}}).
$$

By the scaling identity
$$
\mathcal{L}_{\e}^{-1}[xg] = \iota_{\e,1}^{-1}\mathcal{L}_1^{-1}[\iota_{\e, 2} [{xg}]] =
\e \iota_{\e,1}^{-1}\mathcal{L}_1^{-1}[x\iota_{\e, 2} [{g}]].
$$

Thus,
$$
\|\mathcal{L}_{\e}^{-1} [xg] \|_{V_{\e}} \lesssim \e \|\iota_{\e,1}^{-1}\mathcal{L}_1^{-1} [x\iota_{\e, 2} [{g}]]\|_{V_{\e}}
= \e \e^{-\f{1}{2}}  \|\mathcal{L}_1^{-1} [x\iota_{\e, 2} [{g}]] \|_{V}
\lesssim   \e \e^{\f{1}{2}} \|\iota_{\e, 2} [{g}])\|_{V}
\lesssim \e^{\f{3}{2}}\|f\|_{\mathcal{C}^2}.
$$
The lemma is then proved.\end{proof}

\medskip

\begin{prop}  \label{prop-inhomo1}
There exists a unique solution to the integral equation (\ref{resonance-equation2}). Moreover, the solution, denoted by $\mu$, can be written as $\mu =  \mu^{(0)}+ \mu^{(1)}$,
where
\beas
\mu^{(0)}&=&  \left(\f{1}{2k_{0,\e, 0}} \big( \f{1}{k- k_{0,\e,2}} -\f{1}{k- k_{0,\e,1}}\big) \f{1}{\sqrt{|D|}} c_{\Lambda}\e + \sqrt{|D|} \right) f(0)\mathcal{L}_{\e}^{-1}[\psi], \\
\mu^{(1)}&=&  \big( \f{1}{k- k_{0,\e,2}} -\f{1}{k- k_{0,\e,1}}\big) O(\e^2)+ O(\e^{\f{3}{2}}).
\eeas
Moreover, we have
$$
(\mu , \psi)= \left(1+
\f{k_{0,\e, 0}}{2 } \big( \f{1}{k- k_{0,\e,2}} -\f{1}{k- k_{0,\e,1}}\big)\right) \f{f(0)c_{\Lambda} \e}{\sqrt{|D|}}
+  \big( \f{1}{k- k_{0,\e,2}} -\f{1}{k- k_{0,\e,1}}\big) O(\e^{\f{5}{2}}) +O(\e^2).
$$


\end{prop}
\begin{proof}

Denote by $x =(\mu , \psi)$ and $b= (\mathcal{L}_{\e, 1}^{-1}[f],  \psi )$.  By Lemma \ref{lem-estimate-f},
$$
b=  (f(0)\mathcal{L}_{\e}^{-1}[1],  \psi) + O(\e^2)= f(0) \f{1}{\sqrt{|D|}} c_{\Lambda} \e+ O(\e^2)= b^{(0)}+  O(\e^2).
$$

By Lemma \ref{lem-inverse}, we deduce that
\beas
\f{x}{k^2} = \f{1}{k^2 - A(k, \e)} b &=& \f{1+ O({\e})}{ (k- k_{0,\e,1})(k- k_{0,\e,1})}b \\
 &=& \f{1}{2k_{0,\e, 0}} \big( \f{1}{k- k_{0,\e,2}} -\f{1}{k- k_{0,\e,1}}\big) (1+ O({\e}))b.
\eeas
On the other hand,
$$
\mathcal{L}_{\e, 1}^{-1}[f] =
 f(0) \sqrt{|D|}\mathcal{L}_{\e}^{-1}[\psi] + O(\e^{\f{3}{2}})
 \quad \mbox{in $V_{\e}$},
$$
where the $O(\e^{\f{3}{2}})$ terms can be controlled  by $\|f\|_{\mathcal{C}^2}$. Similarly,
$$
 \mathcal{L}_{\e, 1}^{-1}[\psi] =   \mathcal{L}_{\e}^{-1}[\psi] + O(\e^{\f{3}{2}}) \quad \mbox{in } V_{\e}.
$$

Plugging these into the following formula
$$
\mu = \f{x}{k^2} \mathcal{L}_{\e, 1}^{-1}[\psi] + \mathcal{L}_{\e, 1}^{-1}[f],
$$
we obtain
\beas
\mu
&=& \big( \mathcal{L}_{\e}^{-1}[\psi] + O(\e^{\f{3}{2}})  \big)\f{x}{k^2}  +
\sqrt{|D|}f(0) \mathcal{L}_{\e}^{-1}[\psi] + O(\e^{\f{3}{2}}) \\
&=& \big( \mathcal{L}_{\e}^{-1}[\psi] +
 O(\e^{\f{3}{2}})\big) \f{1}{2k_{0,\e, 0}} \big( \f{1}{k- k_{0,\e,2}} -\f{1}{k- k_{0,\e,1}}\big) \big(1+ O({\e})\big) \big(b^{(0)}+O(\e^2)\big) \\
&& + \sqrt{|D|}f(0)\mathcal{L}_{\e}^{-1}[\psi] + O(\e^{\f{3}{2}}) \\
&=& \mu^{(0)}+  \mu^{(1)},
\eeas
where
$$
\mu^{(0)}=  \f{1}{2k_{0,\e, 0}} \big( \f{1}{k- k_{0,\e,2}} -\f{1}{k- k_{0,\e,1}}\big)b^{(0)}\mathcal{L}_{\e}^{-1}[\psi] + \sqrt{|D|}f(0)\mathcal{L}_{\e}^{-1}[\psi]
$$
and the remaining terms are denoted by $\mu^{(1)}$.
We can check that
$$ \mu^{(1)}= \big( \f{1}{k- k_{0,\e,2}} -\f{1}{k- k_{0,\e,1}}\big)  O(\e^{2})+ O(\e^{\f{3}{2}}).$$

The rest of the lemma follows from a straight forward calculation.
This completes the proof of the lemma. \end{proof}

%
%


\medskip

\subsection{The perturbed Green function in the exterior domain}

We are now ready to calculate the leading asymptotic of the perturbed Green function $G^{ex}_{\e}$ in the exterior domain.
For the purpose, we take the inhomogeneous term $f$ in (\ref{resonance-equation2}) to be $-G^{ex}( \cdot, x_0, k)$.
Throughout this section, we assume that the distance between $x_0$ and the opening of resonator (the origin) is much greater than $\e$, say for instance $|x_0|= O(1)$. Thus $\|f\|_{\mathcal{C}^2}$ is well-bounded and we can apply the results in the previous section.

\begin{thm} \label{thm-single}
Assume that $k\in W \bigcap \R$. Then the perturbed exterior Green function has the following asymptotic
\beas
G_{\e}^{ex}(x, x_0, k) &=& G^{ex}(x, x_0, k) \\
&& - G^{ex}(x, 0, k) G^{ex}(0, x_0, k) c_{\Lambda} \e  \\
&& -G^{ex}(x, 0, k) G^{ex}(0, x_0, k)
 \big( \f{1}{k- k_{0,\e,2}} -\f{1}{k- k_{0,\e,1}}\big)\f{(c_{\Lambda} \e)^{\f{3}{2}}}{\sqrt{|D|}}  \\
&& +   \f{O(\e^{\f{5}{2}})}{k- k_{0,\e,2}} + \f{O(\e^{\f{5}{2}})}{k- k_{0,\e,1}}+ O(\e^{2}).
\eeas
\end{thm}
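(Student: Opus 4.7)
The plan is to start from the representation \eqref{green-exterior},
\begin{equation*}
G_\e^{ex}(x,x_0,k) = G^{ex}(x,x_0,k) + \int_{\Lambda_\e} G^{ex}(x,y,k)\,\varphi_\e(y)\,dy,
\end{equation*}
where $\varphi_\e$ solves the boundary integral equation \eqref{BIE-green function} with source $f(y) = -G^{ex}(y, x_0, k)$. Under the standing assumption $|x_0| = O(1)$, this $f$ is smooth on a neighborhood of $\Lambda_\e$ with $\|f\|_{\mathcal{C}^2} = O(1)$ uniformly in $\e$, so Proposition \ref{prop-inhomo1} applies directly and yields the splitting $\varphi_\e = \mu^{(0)} + \mu^{(1)}$ together with an explicit expression for the scalar moment $(\varphi_\e, \psi)$ with $f(0) = -G^{ex}(0, x_0, k)$.

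Next I would Taylor expand the kernel of the layer potential. For $x$ at distance $O(1)$ from the aperture and $y \in \Lambda_\e$ (so $|y| = O(\e)$),
\begin{equation*}
G^{ex}(x,y,k) = G^{ex}(x,0,k) + y \cdot \nabla_y G^{ex}(x,0,k) + O(|y|^2).
\end{equation*}
Using $\psi \equiv 1/\sqrt{|D|}$, so that $\int_{\Lambda_\e} \varphi_\e\,dy = \sqrt{|D|}\,(\varphi_\e,\psi)$, the zeroth-order contribution is $G^{ex}(x,0,k)\sqrt{|D|}\,(\varphi_\e,\psi)$. Inserting the formula for $(\varphi_\e, \psi)$ from Proposition \ref{prop-inhomo1} and using the identity $c_\Lambda \e \cdot k_{0,\e,0} = (c_\Lambda \e)^{3/2}/\sqrt{|D|}$ produces exactly the two explicit terms in the theorem: the static $O(\e)$ correction $-G^{ex}(x,0,k)G^{ex}(0,x_0,k)\,c_\Lambda \e$ and the resonant contribution with pole structure $\bigl(1/(k-k_{0,\e,2}) - 1/(k-k_{0,\e,1})\bigr)(c_\Lambda \e)^{3/2}/\sqrt{|D|}$, up to the factor contained in $(\varphi_\e, \psi)$.

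The main obstacle is justifying that the dipole integral $\int y\,\varphi_\e(y)\,dy$ and the quadratic Taylor remainder contribute only the stated errors $O(\e^{5/2})/(k-k_{0,\e,j})$ and $O(\e^2)$. For the resonant part $\mu^{(0)}$, which is a scalar multiple of $\mathcal{L}_\e^{-1}[\psi]$, I would use the rescaling operators $\iota_{\e,1}$, $\iota_{\e,2}$ from Section \ref{sec-potential} to pass to the fixed density $\mathcal{L}_1^{-1}[\psi]$ on $\Lambda$; each additional factor of $|y|$ in the integrand generates an extra factor of $\e$ through the change of variables, which together with the resonance prefactor of size $(c_\Lambda \e)^{3/2}/\sqrt{|D|}$ yields the $O(\e^{5/2})/(k-k_{0,\e,j})$ bound. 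For $\mu^{(1)}$, whose components are of sizes $(1/(k-k_{0,\e,j}))O(\e^2)$ and $O(\e^{3/2})$ in $V_\e$, I would pair against $G^{ex}(x,\cdot,k)|_{\Lambda_\e}$ using the $V_\e$--$V_\e^*$ duality from Section \ref{sec-potential}; by \eqref{identity3} the $V_\e^*$ norm of a smooth test function restricted to $\Lambda_\e$ gains a factor $\sqrt{\e}$, producing precisely the stated resonance-amplified and non-resonant remainders.
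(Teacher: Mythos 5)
Your proposal follows essentially the same route as the paper's proof: the representation formula (\ref{green-exterior}) with $f=-G^{ex}(\cdot,x_0,k)$, Proposition \ref{prop-inhomo1} for $(\varphi_\e,\psi)$ to produce the explicit $O(\e)$ and resonant $(c_\Lambda\e)^{3/2}$ terms, and the scaling identities/$V_\e$--$V_\e^*$ duality of Section \ref{sec-potential} to bound the dipole and higher-order kernel contributions, which is exactly how the paper estimates its remainder term $II$ (it bounds $\|y\cdot g(x,y,k)\|_{V_\e^*}=O(\e^{3/2})$ and pairs with the full $\mu$ rather than splitting $\mu^{(0)}/\mu^{(1)}$, but the tools and the resulting error sizes are the same). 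The argument is correct as outlined.
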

\begin{proof}
By using formula (\ref{green-exterior}), we see that
$$
G_{\e}^{ex}(x, x_0, k)
= G^{ex}(x, x_0, k) + \int_{\Lambda_{\e}} G^{ex}(x, y, k) \mu(y)dy.
$$

We write  $G^{ex}(x, y, k)=  G^{ex}(x, 0, k)+ G^{ex}_1(x, y, k)$ where $G^{ex}_1(x, y, k) =y\cdot g(x, y, k)$ for some smooth function $g(x, y, k)$.
We first show the following estimate
\be   \label{estimation1}
\| G^{ex}_1(x, \cdot, k)  \|_{V^*_{\e}} = O(\e^{\f{3}{2}}).
\ee
Indeed, by Lemma \ref{scaling}, we have
$$
\| G^{ex}_1(x, \cdot, k)  \|_{V^*_{\e}} = \e^{\f{1}{2}} \| \iota_{\e, 2}[G^{ex}_1](x, \cdot, k)  \|_{V^*_{1}}
= \e^{\f{1}{2}} \e \| y \cdot g(x, \e y, k)  \|_{V^*_{1}} =  O(\e^{\f{3}{2}}).
$$
The estimate is then proved.

Now, we have
\beas
\int_{\Lambda_{\e}} G^{ex}(x, y, k) \mu(y)dy
&=& \int_{\Lambda_{\e}} G^{ex}(x, 0, k) \mu(y)dy
+ \int_{\Lambda_{\e}} G^{ex}_1(x, y, k) \mu(y)dy \\
&=& I + II.
\eeas

We analyze each of the two terms above. For the first term,
by Proposition \ref{prop-inhomo1},
\beas
I&=& \sqrt{|D|}G^{ex}(x, 0, k) (\mu, \psi)\\
&=&  - \sqrt{|D|} G^{ex}(x, 0, k) \left(1+
\f{k_{0,\e, 0}}{2 } \big( \f{1}{k- k_{0,\e,2}} -\f{1}{k- k_{0,\e,1}}\big)\right)\f{G^{ex}(0, x_0, k) c_{\Lambda} \e}{\sqrt{|D|}} \\
 && + \sqrt{|D|} G^{ex}(x, 0, k) \big( \f{1}{k- k_{0,\e,2}} -\f{1}{k- k_{0,\e,1}}\big) O(\e^{\f{5}{2}}) + O(\e^2)\\
&=&  - G^{ex}(x, 0, k) G^{ex}(0, x_0, k) c_{\Lambda} \e  \\
&& - G^{ex}(x, 0, k) G^{ex}(0, x_0, k)
 \big( \f{1}{k- k_{0,\e,2}} -\f{1}{k- k_{0,\e,1}}\big)\f{(c_{\Lambda} \e)^{\f{3}{2}}}{\sqrt{|D|}} \\
&& + \big( \f{1}{k- k_{0,\e,2}} -\f{1}{k- k_{0,\e,1}}\big) O(\e^{\f{5}{2}})+ O(\e^2).
\eeas

In order to estimate (II), note that by Proposition \ref{prop-inhomo1} and the fact that $\|\mathcal{L}_{\e}^{-1}[\psi]\|_{V_{\e}} = O(\sqrt{\e})$, we can deduce
$$
\mu= \f{O(\e)}{k- k_{0,\e,2} }+ \f{O(\e)}{k- k_{0,\e,1} } + O(\e^{\f{3}{2}}).
$$
Combining this estimate with (\ref{estimation1}), we obtain
$$
II =  \f{O(\e^{\f{5}{2}})}{k- k_{0,\e,2}} +\f{O(\e^{\f{5}{2}})}{k- k_{0,\e,1}}+ O(\e^{2}).
$$

The theorem follows immediately. \end{proof}

\section{The multiple resonators problem} \label{sec-resonator-multi}

\subsection{Preliminary}

Let the system of $M$ resonators be given as in Section \ref{sec-sub-TRE}.
We aim at deriving the resonances and the Green function for the following problem:
\bea
(\Delta + k^2)G_{\e}(x, x_0, k) = \delta(x-x_0),  \quad x \in \Om_{\e},\\  \label{helm1-1}
\f{\p G_{\e}}{\p \nu}(x, x_0, k)  = 0, \quad x \in \p \Om_{\e}, \\  \label{helm2-2}
 G_{\e} \mbox{ satisfies the radiation condition}.  \label{helm3-3-2}
\eea
Let
$$\varphi_{\e, j}=\f{\p G_{\e}}{\p \nu}(\cdot, x_0, k)|_{\Lambda_{\e, j}}.$$
Denote by $\varphi_{\e}=\f{\p G_{\e}}{\p \nu}(\cdot, x_0, k)|_{\Lambda_{\e}}
= (\varphi_{\e, 1}, \varphi_{\e, 2}, \ldots, \varphi_{\e, M} )^{t}$. We also denote by $G^{ex}(x, y, k)$ and
$G^{in}_j(x, y, k)$ the limiting problem in $\Om^{ex}$ and $D_j$, respectively.
At each aperture $\Lambda_{\e, j}$, we have the following integral equation:
\be  \label{multi-BIE}
\int_{\Lambda_{\e, j}} (G^{ex}(x, y, k) + G^{in}_j(x, y, k))\varphi_{\e, j}(y)dy
+ \sum_{l\neq j}\int_{\Lambda_{\e, j}} G^{ex}(x, y, k)\varphi_{\e, j}(y)dy + G^{ex}(x, x_0, k) =0.
\ee

We now introduce four integral operators:
\bea
\mathcal{L}_{\e, j}[f](x) &=& \int_{\Lambda_{\e, j}} \f{1}{\pi|x-y|} f(y)dy, \quad x\in \Lambda_{\e, j},\\
\mathcal{K}_{\e, j}[f](x) &=& (\int_{\Lambda_{\e, j}} \psi (y-z^{(j)}) f(y)dy) \psi(x-z^{(j)})
= \f{1}{|D|}\int_{\Lambda_{\e}} f(y)dy,  \quad x\in \Lambda_{\e, j}, \\
\mathcal{R}_{\e, j}[f](x) &=& \int_{\Lambda_{\e}} R(x-z^{(j)}, y-z^{(j)}, k) f(y)dy, \quad x\in \Lambda_{\e, j} , \\
\mathcal{R}_{\e, j, l}[f](x) &=& \int_{\Lambda_{\e, l}} G^{ex}(x, y, k) f(y)dy, \quad x\in \Lambda_{\e, j}.
\eea

Let us also define
\[ \mathcal{L}_{\e} = \left( \begin{array}{cccc}
\mathcal{L}_{\e, 1} &  &  & \\
 & \mathcal{L}_{\e, 2} &  &\\
 &  & \ldots&   \\
 &   &    &  \mathcal{L}_{\e, M}\end{array} \right),
\mathcal{K}_{\e} = \left( \begin{array}{cccc}
\mathcal{K}_{\e, 1} &  &  & \\
 & \mathcal{K}_{\e, 2} &  &\\
 &  & \ldots&   \\
 &   &    &  \mathcal{K}_{\e, M}\end{array} \right)
 \]

\[ \mathcal{R}_{\e} = \left( \begin{array}{cccc}
\mathcal{R}_{\e, 1} &  &   &  \\
  & \mathcal{R}_{\e, 2} &  & \\
 &  & \ldots&   \\
 &   &    &  \mathcal{R}_{\e, M}\end{array} \right)
+ \left( \begin{array}{cccc}
 & \mathcal{R}_{\e, 1, 2} & \mathcal{R}_{\e, 1, 3}  & \ldots \\
 \mathcal{R}_{\e, 2, 1} &  & \mathcal{R}_{\e, 2, 3}   & \ldots\\
 &  & \ldots&   \\
\mathcal{R}_{\e, M, 1} & \ldots  & \mathcal{R}_{\e, M, M-1}  &  \end{array} \right)
= \mathcal{R}_{\e, 1} + \mathcal{R}_{\e, 2}.\]

Denote by $V_{\e} = \prod_{j=1}^M V_{\e, j}$, $V_{\e}^* = \prod_{j=1}^M V_{\e, j}^*$.
For $1\leq j\leq M$, we define $\psi_j$ to be the element in $V_{\e}$ whose j-th component is the constant function $\f{1}{\sqrt{|D|}}$ and the others are zeros. With these notations, the operator $\mathcal{K}_{\e}$ can be written as
$$
\mathcal{K}_{\e}[\mu] = \sum_{j=1}^M (\mu, \psi_j) \psi_j.
$$

Denote by $f= (f_1, f_2,\ldots,f_M)^t$, where $f_j= -G^{ex}(\cdot, x_0, k)|_{\Lambda_{\e, j}}$.
We can rewrite the integral equations (\ref{multi-BIE}) in the following form
\be
 (\mathcal{L}_{\e} - \f{\mathcal{K}_{\e}}{k^2} + \mathcal{R}_{\e})[\varphi_{\e}] = f,
\ee
which is similar to the one for the single resonator case. Moreover, it is clear that the following estimate holds
$$
 \|\mathcal{L}_{\e} ^{-1} \mathcal{R}_{\e}\| \lesssim \e.
$$

We now present more detailed analysis for the operator $\mathcal{R}_{\e, 1}$ and $\mathcal{R}_{\e, 2}$.
We first consider the operator $\mathcal{R}_{\e, 1}$.

\begin{lem} \label{lem-r1}
The operator $\mathcal{R}_{\e, 1}$ has the following decomposition
\be
\mathcal{R}_{\e, 1}= \mathcal{R}_{\e, 1, 0} +\mathcal{R}_{\e, 1,1}+ \mathcal{R}_{\e, 1, 2},
\ee
where
\beas
\mathcal{R}_{\e, 1, 0}  &= & \sum_{1\leq j \leq M} \alpha_0\left( \f{\e c_{\Lambda}}{|D|} \right)^2|D| (\cdot, \psi_j)\psi_j, \\
\mathcal{R}_{\e, 1, 1}
&= & \sum_{1\leq j \leq M} k \alpha_1\left( \f{\e c_{\Lambda}}{|D|} \right)^2 |D|(\cdot, \psi_j)\psi_j, \\
\mathcal{R}_{\e, 1, 2}
&= &O(\e^3).
\eeas
\end{lem}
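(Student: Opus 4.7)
The plan is to lift the kernel decomposition developed for a single resonator in Section \ref{sec-resonator-single} to the block-diagonal operator $\mathcal{R}_{\e,1}$. Since the $j$-th diagonal block $\mathcal{R}_{\e,j}$ is a mere translate (by $z^{(j)}$) of the single-resonator operator $\mathcal{R}_\e$ treated there, the splitting $R = \alpha_0 + k\alpha_1 + R_3$ carries over block by block, and summation over $j$ then assembles the claimed decomposition.

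First, I would write $R(x-z^{(j)}, y-z^{(j)}, k) = \alpha_0 + k\alpha_1 + R_3(x-z^{(j)}, y-z^{(j)}, k)$, with $R_3$ of the explicit form $k r_1|x-y| + x\cdot r_2 + y\cdot r_3 + k^2 r_4$ established in Section \ref{sec-resonator-single}, and define $\mathcal{R}_{\e,j,0}$, $\mathcal{R}_{\e,j,1}$, $\mathcal{R}_{\e,j,2}$ as the corresponding pieces of $\mathcal{R}_{\e,j}$. The block-diagonal structure of $\mathcal{R}_{\e,1}$ then gives $\mathcal{R}_{\e,1} = \mathcal{R}_{\e,1,0} + \mathcal{R}_{\e,1,1} + \mathcal{R}_{\e,1,2}$ automatically.

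Second, I would compute $\mathcal{R}_{\e,1,0}$ and $\mathcal{R}_{\e,1,1}$ explicitly. Because $R_1$ and $R_2$ are spatial constants, the action on the $j$-th block sends any $f$ to a constant function on $\Lambda_{\e,j}$. Expressing both the integral $\int_{\Lambda_{\e,j}} f$ and this constant function in terms of $\psi_j = \tfrac{1}{\sqrt{|D|}}\mathbf{1}_{\Lambda_{\e,j}}$ yields a rank-one form $c(k)\,(\cdot,\psi_j)\psi_j$ on each block, with the explicit coefficient coming from exactly the same bookkeeping as in the single-resonator analogue (the counterpart decomposition lemma in Section \ref{sec-resonator-single}); summing over $j$ gives the stated closed-form expressions for $\mathcal{R}_{\e,1,0}$ and $\mathcal{R}_{\e,1,1}$.

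Third, I would estimate $\mathcal{R}_{\e,1,2}$ by a blockwise scaling argument parallel to the one in Lemma \ref{lem-estimate-R}. On each block, I would translate by $z^{(j)}$ and then rescale via $\iota_{\e,1}$ and $\iota_{\e,2}$; the boundedness of $\mathcal{L}_1^{-1}:V_1^*\to V_1$, together with the fact that $R_3$ vanishes at the origin modulo its explicit $k$- and $k^2$-dependent pieces, produces the extra factors of $\e$ needed to reach the claimed bound uniformly in $k\in W$. The main obstacle is the term-by-term bookkeeping at this step: one has to verify, for each of the four pieces of $R_3$, that the rescaling pulls out the right power of $\e$ (the $x\cdot r_2$ and $y\cdot r_3$ terms each yield an additional $\e$, while the $k r_1|x-y|$ and $k^2 r_4$ terms trade powers of $\e$ against powers of $k$). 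Once that is done, block-diagonality of $\mathcal{R}_{\e,1,2}$ immediately promotes the blockwise estimate to the full operator, and no further ideas beyond those of Section \ref{sec-resonator-single} are required.
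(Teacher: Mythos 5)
Your plan --- lifting the single-resonator kernel decomposition $R = \alpha_0 + k\alpha_1 + R_3$ block by block and then summing over blocks, with a blockwise scaling argument parallel to Lemma~\ref{lem-estimate-R} for the remainder --- is the right one, and the paper itself gives no proof here. But you should actually carry out the ``bookkeeping'' you defer to, because it does \emph{not} reproduce the printed formulas, and your closing claim that it ``gives the stated closed-form expressions'' is therefore false as written. The constant kernel $\alpha_0$ on the $j$-th block sends $f$ to the constant $\alpha_0\int_{\Lambda_{\e,j}} f$ on $\Lambda_{\e,j}$; writing $\int_{\Lambda_{\e,j}} f = \sqrt{|D|}\,(f,\psi_j)$ and $\mathbf 1_{\Lambda_{\e,j}} = \sqrt{|D|}\,\psi_j$ gives $\alpha_0\,|D|\,(f,\psi_j)\,\psi_j$, and likewise for $k\alpha_1$. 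So the correct forms are $\mathcal{R}_{\e,1,0} = \sum_j \alpha_0\,|D|\,(\cdot,\psi_j)\psi_j$ and $\mathcal{R}_{\e,1,1} = \sum_j k\alpha_1\,|D|\,(\cdot,\psi_j)\psi_j$ --- the single-resonator form $\mathcal{R}_1 = |D|\alpha_0(\cdot,\psi)\psi$ transcribed per block, with \emph{no} factor $\left(\tfrac{\e c_\Lambda}{|D|}\right)^2$. That factor in the statement of Lemma~\ref{lem-r1} is a misprint: it belongs to the bilinear quantity $B^{(1,1)}_{ij} = -(\mathcal{L}_\e^{-1}\mathcal{R}_{\e,1}\mathcal{L}_\e^{-1}[\psi_i],\psi_j)$ in Lemma~\ref{lem-estimate-B}, where each of the two $\mathcal{L}_\e^{-1}$ applications contributes a factor $(\mathcal{L}_\e^{-1}[\psi_i],\psi_i) = \e c_\Lambda/|D|$; if $\mathcal{R}_{\e,1,0}$ already carried $(\e c_\Lambda/|D|)^2$ one would get $B^{(1,1)}=O(\e^4)$, contradicting Lemma~\ref{lem-estimate-B}. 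For comparison, Lemma~\ref{lem-r2} correctly records $\mathcal{R}_{\e,2,0}$ with coefficient $G^{ex}(z^{(j)},z^{(l)},k)|D|$ and no $\e^2$ factor.

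Second, on $\mathcal{R}_{\e,1,2}$: your blockwise transcription of Lemma~\ref{lem-estimate-R} gives $\|\mathcal{L}_\e^{-1}\mathcal{R}_{\e,1,2}\|\lesssim \e^2 + \e k^2$, hence $\mathcal{R}_{\e,1,2}=O(\e^3 + \e^2 k^2)$ as an operator $V_\e\to V_\e^*$. The bare $O(\e^3)$ in the statement is what this becomes under the quasi-stationary restriction $k^2=O(\e)$, which is where the lemma is applied; your remark that the $kr_1$ and $k^2 r_4$ pieces ``trade powers of $\e$ against powers of $k$'' is getting at exactly this, but the restriction should be made explicit rather than left implicit.
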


We next consider the operator $\mathcal{R}_{\e, 2}$. The following result holds.
\begin{lem} \label{lem-r2}
The operator $\mathcal{R}_{\e, 2}$ has the following decomposition
\be
\mathcal{R}_{\e, 2}= \mathcal{R}_{\e, 2, 0} +\mathcal{R}_{\e, 2,1},
\ee
where $\mathcal{R}_{\e, 2, 0}$ has the representation
\be
\mathcal{R}_{\e, 2, 0}[\mu] = \sum_{j=1}^M \sum_{l\neq j}G^{ex}(z^{(j)}, z^{(l)}, k) |D|(\mu, \psi_l)\psi_j
\ee
and $\mathcal{R}_{\e, 2, 1}$ satisfies the estimate
\be
\|  \mathcal{L}_{\e} ^{-1} \mathcal{R}_{\e, 2, 1} \|  \lesssim O(\e^2).
\ee
\end{lem}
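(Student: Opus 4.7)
The plan is to obtain the decomposition by Taylor expanding the exterior Green's function $G^{ex}(x,y,k)$ in the spatial variables about the aperture centers $(z^{(j)}, z^{(l)})$ for each off-diagonal pair $j \neq l$, and to collect the zeroth-order term as the principal part $\mathcal{R}_{\e,2,0}$. The crucial point is that because the resonators are disjoint, $|z^{(j)} - z^{(l)}|$ is of order one, and hence $G^{ex}(x,y,k)$ is smooth in a neighborhood of $(z^{(j)}, z^{(l)})$ inside the plane $\{x_3 = 0\} \times \{x_3 = 0\}$. Writing
\[
r_{j,l}(x,y,k) = G^{ex}(x,y,k) - G^{ex}(z^{(j)}, z^{(l)}, k),
\]
the remainder vanishes at $(z^{(j)}, z^{(l)})$ and satisfies $|r_{j,l}(x,y,k)| = O(\e)$ uniformly on $\Lambda_{\e,j} \times \Lambda_{\e,l}$, with derivatives controlled.

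The first step is to identify $\mathcal{R}_{\e,2,0}$. Inserting the constant $G^{ex}(z^{(j)}, z^{(l)}, k)$ into the definition of $\mathcal{R}_{\e,j,l}[\mu_l]$ produces, for $x \in \Lambda_{\e,j}$, the constant function $G^{ex}(z^{(j)}, z^{(l)}, k)\int_{\Lambda_{\e,l}} \mu_l(y)\,dy$. Recalling that $\psi_l = |D|^{-1/2}$ on $\Lambda_{\e,l}$ and vanishes on the other blocks, and similarly for $\psi_j$, a one-line manipulation recasts this as $G^{ex}(z^{(j)}, z^{(l)}, k)\,|D|\,(\mu,\psi_l)\,\psi_j$. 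Summing over $j$ and over $l \neq j$ yields exactly the stated formula for $\mathcal{R}_{\e,2,0}$, and $\mathcal{R}_{\e,2,1}$ is then by definition the block operator whose $(j,l)$-kernel is $r_{j,l}$.

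The second step is the operator estimate $\|\mathcal{L}_{\e}^{-1} \mathcal{R}_{\e,2,1}\|_{\textbf{L}(V_{\e} \rightarrow V_{\e})} \lesssim \e^2$. I would introduce, for each $j$, localized scaling operators $\iota_{\e,1,j}$ and $\iota_{\e,2,j}$ centered at $z^{(j)}$, exact analogues of $\iota_{\e,1}, \iota_{\e,2}$ from Section \ref{sec-resonator-single}, which satisfy block-wise versions of the identities (\ref{identity1})-(\ref{scaling}). Applied to the off-diagonal block $(j,l)$, they reduce matters to an operator on the fixed reference aperture $\Lambda$ whose kernel is the rescaled remainder $r_{j,l}(z^{(j)} + \e\tilde{x}, z^{(l)} + \e\tilde{y}, k)$, a function of size $O(\e)$. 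Running the same chain of norm inequalities as in the proof of Lemma \ref{lem-estimate-R} then gives an extra factor of $\e$ beyond what was obtained there, upgrading the bound from $O(\e)$ to the desired $O(\e^2)$.

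The main bookkeeping issue is that the scaling centers $z^{(j)}$ and $z^{(l)}$ for the two variables now differ, so one must verify that the single-resonator scaling identities pass cleanly to the off-diagonal blocks. This is a routine extension, however, since $G^{ex}$ is globally smooth on the region of interest and the required gain of $\e$ comes entirely from the Taylor remainder; no new analytic input beyond Lemma \ref{lem-estimate-R} is required.
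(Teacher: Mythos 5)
Your proposal is correct and follows essentially the same route as the paper: expand $G^{ex}(x,y,k)$ about $(z^{(j)},z^{(l)})$, recognize the constant-kernel part as $\mathcal{R}_{\e,2,0}$ via $\psi_j=\psi_l=|D|^{-1/2}$ on the respective apertures, and bound the remainder block operator by the rescaling argument of Lemma \ref{lem-estimate-R}, with the extra factor of $\e$ coming from the Taylor remainder being $O(\e)$ on $\Lambda_{\e,j}\times\Lambda_{\e,l}$. The paper's proof is the same in substance, merely writing the remainder explicitly as $g_1(x,y,k)(x-z^{(j)})+g_2(x,y,k)(y-z^{(l)})$ with smooth $g_1,g_2$.
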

\begin{proof}
Observe that for $x\in \Lambda_{\e, j}, y\in \Lambda_{\e, l}$, we have
$$
G^{ex}(x, y, k) = G^{ex}(z^{(j)}, z^{(l)}, k) + g_1(x, y, k)(x-z^{(j)}) + g_2(x, y, k)(y-z^{(l)}),
$$
where $g_1, g_2$ are smooth functions in $x, y$ and are analytic with respect to $k$.
Thus, we can decompose the operator
$\mathcal{R}_{\e, j, l}$ into two parts: $\mathcal{R}_{\e, j, l, 0}$ and $\mathcal{R}_{\e, j, l, 1}$ which correspond to the kernel $ G^{ex}(z^{(j)}, z^{(l)}, k)$ and $g_1(x, y, k)(x-z^{(j)}) + g_2(x, y, k)(y-z^{(l)})$ respectively.
By using the same method in the proof of Lemma \ref{lem-estimate-R}, the following estimate holds
$$
\| \mathcal{L}_{\e, j}^{-1} \mathcal{R}_{\e, j, l, 1} \| \lesssim  O(\e^2).
$$

Following the decomposition of the operator $\mathcal{R}_{\e, j, l}$, we define a similar decomposition for the operator $\mathcal{R}_{\e, 2}= \mathcal{R}_{\e, 2, 0}+ \mathcal{R}_{\e, 2, 1}$. It is clear that the two operators 
$\mathcal{R}_{\e, 2, 0}$ and $\mathcal{R}_{\e, 2, 1}$ have the required properties. This completes the proof of the lemma.\end{proof}

\subsection{The resonances for the multiple resonators} \label{sec-sub-resonance-multi}

We first consider the operator $\mathcal{J}_{\e} = \mathcal{L}_{\e}- \f{\mathcal{K}_{\e}}{k^2}$. Following the same approach as for the single resonator case, we can prove the lemma below.
\begin{lem}
For $\e$ small enough, there are exactly two characteristic values ( eigenvalue ) for the operator $\mathcal{J}_{\e}(k)$ in $\mathbb{C}$ and each has multiplicity $M$. Moreover, the characteristic values are given as
$$
k_{0, \e, 0} = \pm \sqrt{\f{1}{|D|} \e c_{\Lambda}}.
$$
\end{lem}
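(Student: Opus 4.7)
The plan is to imitate the single-resonator proof verbatim, exploiting two structural facts: $\mathcal{L}_{\e}=\mathrm{diag}(\mathcal{L}_{\e,1},\ldots,\mathcal{L}_{\e,M})$ is block-diagonal, and the functions $\psi_1,\ldots,\psi_M$ are disjointly supported on the $M$ apertures. Under these circumstances the operator $\mathcal{J}_{\e}(k)$ decouples into $M$ independent copies of the single-resonator operator, which explains both the fact that only the two values $\pm k_{0,\e,0}$ appear and that each comes with multiplicity exactly $M$.

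First I would assume $\mathcal{J}_{\e}(k)[\mu]=0$ for some nonzero $\mu\in V_{\e}$ and rewrite this, using the representation $\mathcal{K}_{\e}[\mu]=\sum_{j=1}^{M}(\mu,\psi_j)\psi_j$ recorded above, as
\be
\mu=\f{1}{k^2}\sum_{j=1}^{M}(\mu,\psi_j)\,\mathcal{L}_{\e}^{-1}[\psi_j].
\ee
Because $\psi_j$ is supported in $\Lambda_{\e,j}$ and $\mathcal{L}_{\e}$ is block-diagonal, $\mathcal{L}_{\e}^{-1}[\psi_j]$ is also supported in $\Lambda_{\e,j}$, so the cross terms vanish and
\be
(\mathcal{L}_{\e}^{-1}[\psi_j],\psi_i)=\delta_{ij}\,(\mathcal{L}_{\e,j}^{-1}[\psi_j],\psi_j)=\delta_{ij}\,\f{\e c_{\Lambda}}{|D|},
\ee
by the single-resonator computation together with the scaling identity $c_{\Lambda_{\e}}=\e c_{\Lambda}$ inherent in (\ref{constant-capcity}).

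Pairing the displayed identity successively with $\psi_1,\ldots,\psi_M$ yields the uncoupled system
\be
(\mu,\psi_i)\left(1-\f{\e c_{\Lambda}}{|D|\,k^2}\right)=0,\qquad i=1,\ldots,M.
\ee
If all $(\mu,\psi_i)$ vanished, the representation formula would force $\mu=0$; hence at least one is nonzero and therefore $k^2=\e c_{\Lambda}/|D|$, giving the two candidate characteristic values $\pm k_{0,\e,0}$. Conversely, at $k=\pm k_{0,\e,0}$ the scalar factor $1-\e c_{\Lambda}/(|D|k^2)$ vanishes identically, so the constraints on the $(\mu,\psi_i)$'s disappear and the kernel is spanned by the $M$ linearly independent functions $\mathcal{L}_{\e}^{-1}[\psi_1],\ldots,\mathcal{L}_{\e}^{-1}[\psi_M]$, proving that the geometric multiplicity is exactly $M$.

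The only delicate point I anticipate is passing from the dimension of the kernel to the multiplicity in the sense of the generalized Rouche theorem used in the sequel. This is clean here because the degeneracy of $\mathcal{J}_{\e}(k)$ at $\pm k_{0,\e,0}$ comes entirely from a simple zero of the explicit scalar $1-\e c_{\Lambda}/(|D|k^2)$ multiplying a fixed $M$-dimensional subspace on which $\mathcal{L}_{\e}-\mathcal{K}_{\e}/k^2$ acts as that scalar; the remaining directions in $V_{\e}$ are controlled by the invertible part $\mathcal{L}_{\e}$ (with the pole at $k=0$ absorbed by $\mathcal{K}_{\e}/k^2$, exactly as in Lemma~\ref{lem-characteristic1}). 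Consequently the algebraic and geometric multiplicities coincide, and the claim follows.
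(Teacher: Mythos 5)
Your proposal is correct and takes essentially the same route as the paper, which simply invokes the single-resonator argument: the block-diagonal structure of $\mathcal{L}_{\e}$ and the disjoint supports of the $\psi_j$ decouple the problem into $M$ copies of the scalar relation $k^2=(\mathcal{L}_{\e,j}^{-1}[\psi_j],\psi_j)=\e c_{\Lambda}/|D|$. Your identification of the kernel at $\pm k_{0,\e,0}$ as the span of $\mathcal{L}_{\e}^{-1}[\psi_1],\ldots,\mathcal{L}_{\e}^{-1}[\psi_M]$, together with the simple zero of the scalar $1-\e c_{\Lambda}/(|D|k^2)$ (which rules out root functions of order higher than one), correctly yields multiplicity $M$ at each characteristic value.
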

In what follows, we consider the resonances for the operator $\mathcal{A}_{\e}(k) = \mathcal{L}_{\e} - \f{\mathcal{K}_{\e}}{k^2} + \mathcal{R}_{\e}(k)$. We shall show that under the perturbation of the operator $\mathcal{R}_{\e}$, each one of the two characteristic values of $\mathcal{J}_{\e}(k)$ splits into $M$ resonances. 
We first establish the following preliminary result by using the generalized Rouche's theorem as for Lemma \ref{lem-characteristic1}.

\begin{lem} \label{lem-charact-mul}
For sufficiently small $\e$, there exist 2M characteristic values, counting multiplicity, for the operator-valued analytic function $\mathcal{A}_{\e}(k)$  in the neighborhood $W=\{k: |k|\leq \f{1}{2}k_1\}$. Moreover, they all have the asymptotic
$$
 k_{0, \e} = \pm k_{0, \e , 0} + O(\e).
$$
\end{lem}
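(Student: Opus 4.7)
I would mimic the argument used in Lemma \ref{lem-characteristic1} for a single resonator, with the only substantive change being to keep track of the $M$-fold multiplicity of the characteristic values of the unperturbed operator $\mathcal{J}_{\e}(k)=\mathcal{L}_{\e}-\mathcal{K}_{\e}/k^2$.

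By the preceding lemma, $\mathcal{J}_{\e}$ is finite-meromorphic and of Fredholm type on $W$, with exactly two characteristic values there, namely $\pm k_{0,\e,0}$, each of multiplicity $M$, together with a unique pole at $k=0$ whose order in the operator-valued sense is $2M$ (because $1/k^2$ is of order $2$ and $\mathcal{K}_{\e}=\sum_{j=1}^M(\cdot,\psi_j)\psi_j$ has rank $M$). Hence the multiplicity of $\mathcal{J}_{\e}$ relative to $\partial W$ is $2M-2M=0$, exactly as in the single-resonator case. The uniform bound $\|\mathcal{L}_{\e}^{-1}\mathcal{R}_{\e}(k)\|_{\mathbf{L}(V_{\e}\to V_{\e})}\lesssim\e$ already recorded at the end of the preliminary subsection, combined with the factorization $\mathcal{J}_{\e}(k)=\mathcal{L}_{\e}(I-\mathcal{L}_{\e}^{-1}\mathcal{K}_{\e}/k^2)$ and the uniform invertibility of $I-\mathcal{L}_{\e}^{-1}\mathcal{K}_{\e}/k^2$ on $\partial W$, yields $\|\mathcal{J}_{\e}(k)^{-1}\mathcal{R}_{\e}(k)\|\lesssim\e$ on $\partial W$.

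Writing $\mathcal{A}_{\e}(k)=\mathcal{J}_{\e}(k)\bigl(I+\mathcal{J}_{\e}(k)^{-1}\mathcal{R}_{\e}(k)\bigr)$ and applying the generalized Rouche theorem, the second factor is invertible on $\partial W$ with zero multiplicity for $\e$ small, so $\mathcal{A}_{\e}$ has the same multiplicity in $W$ as $\mathcal{J}_{\e}$, namely $0$. Since $\mathcal{A}_{\e}$ retains the order-$2$ pole at $k=0$ with multiplicity $2M$ (the perturbation $\mathcal{R}_{\e}(k)$ is analytic at $k=0$), it must have exactly $2M$ characteristic values in $W$, counted with multiplicity.

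To pin down the asymptotic $k_{0,\e}=\pm k_{0,\e,0}+O(\e)$ I would rerun the Rouche argument on small disks $B(\pm k_{0,\e,0},C\e)$ for a sufficiently large constant $C$. On the boundary of each such disk one has $\|\mathcal{J}_{\e}(k)^{-1}\|=O(1/\e)$, so that $\|\mathcal{J}_{\e}(k)^{-1}\mathcal{R}_{\e}(k)\|<1$ once $C$ is taken large enough; each disk then contains exactly $M$ characteristic values of $\mathcal{A}_{\e}$, which together exhaust the $2M$ counted above. The principal obstacle relative to the single-resonator case is precisely this bound $\|\mathcal{J}_{\e}(k)^{-1}\|=O(1/\e)$ on circles of radius $O(\e)$ about the degenerate characteristic values $\pm k_{0,\e,0}$; it can be obtained from the explicit resolvent identity $\mathcal{J}_{\e}(k)^{-1}=\mathcal{L}_{\e}^{-1}+k^{-2}\mathcal{L}_{\e}^{-1}\mathcal{K}_{\e}\bigl(I-k^{-2}\mathcal{L}_{\e}^{-1}\mathcal{K}_{\e}\bigr)^{-1}\mathcal{L}_{\e}^{-1}$ together with the observation that the $M\times M$ matrix $\bigl((\mathcal{L}_{\e}^{-1}[\psi_l],\psi_j)\bigr)_{j,l}$ equals $(\e c_{\Lambda}/|D|)I_M$ at leading order, since the $\psi_j$ have pairwise disjoint supports.
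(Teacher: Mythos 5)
Your proposal is correct and takes essentially the paper's route: the paper proves this lemma exactly as in the single-resonator case (Lemma \ref{lem-characteristic1}), computing the multiplicity of $\mathcal{J}_{\e}$ in $W$ as $2M-2M=0$ (characteristic values $\pm k_{0,\e,0}$ of multiplicity $M$ each against the pole of order $2$ and rank $M$ at $k=0$) and transferring it to $\mathcal{A}_{\e}$ via the generalized Rouch\'e theorem using $\|\mathcal{J}_{\e}(k)^{-1}\mathcal{R}_{\e}(k)\|=O(\e)$ on $\p W$. Your extra localized Rouch\'e step on the disks of radius $C\e$ about $\pm k_{0,\e,0}$ supplies the $O(\e)$ asymptotic that the paper leaves implicit (it later gets sharper expansions from the matrix equation $k^2=A(k,\e)$), and the resolvent identity you invoke, together with $\big((\mathcal{L}_{\e}^{-1}[\psi_l],\psi_j)\big)=\f{\e c_{\Lambda}}{|D|}Id$, in fact yields the stronger bound $\|\mathcal{J}_{\e}(k)^{-1}\mathcal{R}_{\e}(k)\|=O(\sqrt{\e}/C)$ on those circles, so the contraction needed there holds and the argument closes.
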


We now present a systematic way to calculate these characteristic values (or resonances).
We assume that
$$
(\mathcal{L}_{\e} - \f{\mathcal{K}_{\e}}{k^2} + \mathcal{R}_{\e})[\varphi_{}] = 0.
$$
For $\e$ small enough, the operator $\mathcal{L}_{\e}+\mathcal{R}_{\e}$ is invertible. Moreover,
$$
(Id + \mathcal{L}_{\e}^{-1}\mathcal{R}_{\e})^{-1} \mathcal{L}_{\e}^{-1}
=\sum_{n=0}^{\infty}(-\mathcal{B}(k))^n \mathcal{L}_{\e}^{-1}
$$
where $\mathcal{B}(k)= \mathcal{L}_{\e}^{-1}\mathcal{R}_{\e}(k)$.
Thus,
$$
\varphi_{} = \f{1}{k^2} (\mathcal{L}_{\e}+\mathcal{R}_{\e})^{-1} \sum_{i=1}^M (\varphi, \psi_i) \psi_i
= \f{1}{k^2}\sum_{i=1}^M (\varphi, \psi_i) \sum_{n=0}^{\infty} (-\mathcal{B})^n \mathcal{L}^{-1}_{\e}\psi_i.
$$
Consequently,
\beas
k^2 (\varphi_{}, \psi_j) &= &
\sum_{i=1}^M (\varphi, \psi_i) \left( \sum_{n=0}^{\infty} (-\mathcal{B})^n \mathcal{L}^{-1}_{\e}[\psi_i], \psi_j\right) \\
&= &
\sum_{i=1}^M (\varphi, \psi_i) (\mathcal{L}^{-1}_{\e}[\psi_i], \psi_j) -
\sum_{i=1}^M (\varphi, \psi_i) ( \mathcal{B} \mathcal{L}^{-1}_{\e}[\psi_i], \psi_j) + \ldots.
\eeas

We define $x=(x_1,\ldots, x_M)^t$, where $x_j= (\varphi, \psi_i)$,
$B_{i,j}^{(0)} = (\mathcal{L}_{\e}^{-1}[\psi_{i}], \psi_{j} )$, and
$$B_{i,j}^{(n)} = \big((-\mathcal{B})^n \mathcal{L}_{\e}^{-1} [\psi_{i}], \psi_{j} \big).$$

Then the above equation can be rewritten equivalently as
\be
k^2 x= A(k, \e)x:=\left( B^{(0)}  +  B^{(1)} + \sum_{n=2}^{\infty }B^{(n)} \right) x.
\ee
Thus, we have actually proved the following lemma.

 \begin{lem}  \label{lem-connect-mul}
If $k$ is a characteristic value for the operator-valued analytic function $\mathcal{A}_{\e}(k)$, then
$k$ is a characteristic value for the matrix-valued function $k^2- A(k, \e)$.
 \end{lem}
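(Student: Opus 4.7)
The plan is to extract the statement as a direct consequence of the chain of manipulations that immediately precedes it in the text, while adding the one genuinely new ingredient: verifying that the vector $x$ constructed from a nontrivial characteristic function of $\mathcal{A}_{\e}(k)$ is itself nontrivial. Without this, an equality $k^2 x = A(k,\e) x$ is not enough to conclude that $k$ is a characteristic value of the matrix-valued function $k^2 - A(k,\e)$.

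First, I would take a nontrivial $\varphi \in V_{\e}$ with $\mathcal{A}_{\e}(k)[\varphi] = 0$ and rewrite the equation as
\[
(\mathcal{L}_{\e} + \mathcal{R}_{\e})[\varphi] = \f{1}{k^2}\mathcal{K}_{\e}[\varphi] = \f{1}{k^2}\sum_{i=1}^{M}(\varphi,\psi_i)\psi_i.
\]
Since $\|\mathcal{L}_{\e}^{-1}\mathcal{R}_{\e}\| \lesssim \e$, the operator $\mathcal{L}_{\e}+\mathcal{R}_{\e}$ is invertible for small $\e$ with Neumann expansion $\sum_{n\geq 0}(-\mathcal{B})^n \mathcal{L}_{\e}^{-1}$. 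Applying this inverse and then pairing against each $\psi_j$ reproduces exactly the computation already displayed just above the lemma statement, yielding the system $k^2 x = A(k,\e) x$ with $x_j = (\varphi,\psi_j)$ and $A(k,\e)$ having entries assembled from the $B^{(n)}_{i,j}$.

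The step that requires care, and which I expect to be the one substantive point, is showing $x \neq 0$. I would argue by contradiction: if $x_j = (\varphi,\psi_j) = 0$ for every $j$, then $\mathcal{K}_{\e}[\varphi] = \sum_j (\varphi,\psi_j)\psi_j = 0$, and the original equation degenerates to $(\mathcal{L}_{\e}+\mathcal{R}_{\e})[\varphi] = 0$. Invertibility of $\mathcal{L}_{\e}+\mathcal{R}_{\e}$ for small $\e$ then forces $\varphi = 0$, contradicting the assumption that $k$ is a characteristic value of $\mathcal{A}_{\e}(k)$. Hence $x \neq 0$, so $k^2 I - A(k,\e)$ is singular at $k$, which is what it means for $k$ to be a characteristic value of the matrix-valued analytic function $k^2 - A(k,\e)$.

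Finally, I would note that the Neumann series for $(\mathcal{L}_{\e}+\mathcal{R}_{\e})^{-1}$ converges in operator norm uniformly for $k \in W$, so each matrix entry $B^{(n)}_{i,j}$ is analytic in $k$ on $W$, guaranteeing that $A(k,\e)$ is indeed a matrix-valued analytic function on $W$ and that the notion of characteristic value is well-posed. No further estimates are needed for the lemma itself; the quantitative work on the size of the $B^{(n)}_{i,j}$ and the actual location of the roots of $\det(k^2 I - A(k,\e))$ belongs to the subsequent asymptotic analysis paralleling Lemmas \ref{lem-estimate-BB}--\ref{lem-resonance} in the single-resonator section.
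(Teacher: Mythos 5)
Your proposal is correct and follows essentially the same route as the paper: invert $\mathcal{L}_{\e}+\mathcal{R}_{\e}$ via the Neumann series, pair against the $\psi_j$'s, and read off $k^2x=A(k,\e)x$ with $x_j=(\varphi,\psi_j)$. Your extra verification that $x\neq 0$ (via $\mathcal{K}_{\e}[\varphi]=0$ forcing $\varphi=0$ by invertibility of $\mathcal{L}_{\e}+\mathcal{R}_{\e}$) is a point the paper leaves implicit, and it is argued correctly.
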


\medskip

We now determine the characteristic values and the corresponding characteristic vectors for $k^2- A(k, \e)$.

Let the matrices $T=(T_{ij})_{M\times M}$ and $S=(S_{ij})_{M\times M}$ be defined as in (\ref{defTS}).
We first calculate in the following lemma the matrices $B^{(0)}$, $B^{(1)}$, and  $B^{(n)}$ for $n\geq 2$.
\begin{lem}  \label{lem-estimate-B}
We have
$$
B^{(0)} = \f{\e c_{\Lambda}}{ |D|} Id, \,\,\| B^{(n)}\| \lesssim O(\e^{n+1});
$$
Moreover, $B^{(1)} = B^{(1, 1)}+  B^{(1, 2)}+ B^{(1, 3)}$ where
\bea
B^{(1, 1)} &= & -\big(\alpha_0+ k\alpha_1 + O(\e)+ O(k^2)\big)\left( \f{\e c_{\Lambda}}{|D|} \right)^2 |D|Id, \\
 B^{(1, 2)}_{i,j} &=&-
 (\f{\e c_{\Lambda}}{ |D|} )^2|D| (1-\delta_{ij})\left( T +  kS + O(k^2)O(\e^2) \right), \\
 B^{(1, 3)} &=& O(\e^3).
\eea
\end{lem}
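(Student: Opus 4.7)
The plan is to exploit the block-diagonal structure of $\mathcal{L}_{\e}$ together with the support properties of the $\psi_j$'s, each of which lives on a single aperture $\Lambda_{\e,j}$, so that every inner product of the form $(\mathcal{L}_{\e}^{-1}[\psi_i],\psi_j)$ collapses to a local computation. All position-dependent information will then be routed through $\mathcal{R}_{\e}$ via the decompositions supplied by Lemmas \ref{lem-r1} and \ref{lem-r2}.

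First I would compute $B^{(0)}$ directly. Since $\mathcal{L}_{\e}$ is block-diagonal with blocks $\mathcal{L}_{\e,j}$ and $\psi_i$ is supported on $\Lambda_{\e,i}$ with value $1/\sqrt{|D|}$, the image $\mathcal{L}_{\e}^{-1}[\psi_i]$ is supported on $\Lambda_{\e,i}$. Hence $B^{(0)}_{ij}=\delta_{ij}|D|^{-1}(\mathcal{L}_{\e,i}^{-1}[1],1)$, and the scaling identity (\ref{scaling}) together with the definition (\ref{constant-capcity}) of $c_{\Lambda}$ reduces this to $\delta_{ij}\e c_{\Lambda}/|D|$, giving $B^{(0)}=(\e c_{\Lambda}/|D|)\,\text{Id}$. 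For the general estimate $\|B^{(n)}\|\lesssim \e^{n+1}$, I would use the duality bound $|B^{(n)}_{ij}|\le\|\mathcal{B}\|^n\|\mathcal{L}_{\e}^{-1}[\psi_i]\|_{V_{\e}}\|\psi_j\|_{V_{\e}^*}$. The key observation is that $\mathcal{L}_{\e}^{-1}$ is an isometry from $V_{\e}^*$ to $V_{\e}$, so the $B^{(0)}$ computation already yields $\|\psi_i\|_{V_{\e}^*}^2=(\mathcal{L}_{\e}^{-1}[\psi_i],\psi_i)=\e c_{\Lambda}/|D|$; combined with the a priori estimate $\|\mathcal{B}\|\lesssim\e$, this gives $|B^{(n)}_{ij}|\lesssim \e^n\cdot\e=\e^{n+1}$.

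The heart of the lemma is the identification of $B^{(1)}$. I would insert the splitting $\mathcal{R}_{\e}=(\mathcal{R}_{\e,1,0}+\mathcal{R}_{\e,1,1}+\mathcal{R}_{\e,1,2})+(\mathcal{R}_{\e,2,0}+\mathcal{R}_{\e,2,1})$ from Lemmas \ref{lem-r1}--\ref{lem-r2} into $B^{(1)}_{ij}=-(\mathcal{L}_{\e}^{-1}\mathcal{R}_{\e}\mathcal{L}_{\e}^{-1}[\psi_i],\psi_j)=-(\mathcal{R}_{\e}\mathcal{L}_{\e}^{-1}[\psi_i],\mathcal{L}_{\e}^{-1}[\psi_j])$, using symmetry of the kernel $1/(\pi|x-y|)$. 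The diagonal pieces $\mathcal{R}_{\e,1,0}$ and $\mathcal{R}_{\e,1,1}$ are sums of rank-one operators $(\cdot,\psi_k)\psi_k$; pairing collapses via $(\mathcal{L}_{\e}^{-1}[\psi_i],\psi_k)=\delta_{ik}\e c_{\Lambda}/|D|$, forcing $i=j=k$ and producing exactly $-(\alpha_0+k\alpha_1)(\e c_{\Lambda}/|D|)^2|D|\,\text{Id}$, which matches $B^{(1,1)}$ up to the $O(\e)+O(k^2)$ absorbed from $\mathcal{R}_{\e,1,2}$. For the cross piece $\mathcal{R}_{\e,2,0}=\sum_{p\ne l}G^{ex}(z^{(p)},z^{(l)},k)|D|(\cdot,\psi_l)\psi_p$, the same collapsing selects $l=i$ and $p=j$, contributing $-(\e c_{\Lambda}/|D|)^2|D|(1-\delta_{ij})G^{ex}(z^{(i)},z^{(j)},k)$; expanding $G^{ex}(z^{(i)},z^{(j)},k)=(2\pi|z^{(i)}-z^{(j)}|)^{-1}+ik/(2\pi)+O(k^2)=T_{ij}+kS_{ij}+O(k^2)$ (for $i\ne j$) yields $B^{(1,2)}$. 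The two error pieces $\mathcal{R}_{\e,1,2}$ and $\mathcal{R}_{\e,2,1}$ satisfy $\|\mathcal{L}_{\e}^{-1}\mathcal{R}_{\e,\mathrm{err}}\|\lesssim\e^2$ by Lemmas \ref{lem-r1}--\ref{lem-r2} (or by the scaling argument of Lemma \ref{lem-estimate-R}), which combined with $\|\mathcal{L}_{\e}^{-1}[\psi_i]\|_{V_{\e}}\cdot\|\psi_j\|_{V_{\e}^*}\lesssim\e$ gives $B^{(1,3)}=O(\e^3)$.

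The main obstacle will be the careful bookkeeping in the $B^{(1,2)}$ step: tracking the $(1-\delta_{ij})$ prefactor (which reflects the constraint $l\ne p$ built into $\mathcal{R}_{\e,2,0}$) and matching the coefficient of $k$ in the expansion of $G^{ex}(z^{(i)},z^{(j)},k)$ to $S_{ij}$ on the off-diagonal. Note that the diagonal contribution $\delta_{ij}\Re\alpha_1$ in the definition of $S$ is automatically killed by the $(1-\delta_{ij})$ prefactor, and is included in $S$ only for the later convenience of treating $T+kS$ as a single $M\times M$ matrix when diagonalizing $T$ via the orthonormal basis $Y_1,\ldots,Y_M$. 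Once this bookkeeping is in place, the remaining contributions are all of higher order and are absorbed by the same energy estimates used above.
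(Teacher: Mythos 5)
Your proof is correct and follows essentially the same route as the paper: you compute $B^{(0)}$ from the block-diagonal structure and disjoint supports of the $\psi_i$, bound $B^{(n)}$ by duality using $\|\mathcal{B}\|\lesssim\e$ and $\|\mathcal{L}_{\e}^{-1}[\psi_i]\|_{V_\e}\|\psi_j\|_{V_\e^*}=O(\e)$, and split $B^{(1)}$ according to the decomposition $\mathcal{R}_{\e}=\mathcal{R}_{\e,1}+\mathcal{R}_{\e,2,0}+\mathcal{R}_{\e,2,1}$ from Lemmas~\ref{lem-r1}--\ref{lem-r2}, exactly as the paper does. Your additional remark that the $\delta_{ij}\Re\alpha_1$ term in $S$ is killed by the $(1-\delta_{ij})$ prefactor in $B^{(1,2)}$, with the diagonal $S_{ii}=\alpha_1$ contribution supplied instead by $B^{(1,1)}$, is a genuine clarification of a point the paper leaves implicit.
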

\begin{proof} First, it is clear that
\beas
B^{(0)}_{i,j}&=& (\mathcal{L}^{-1}_{\e}[\psi_i], \psi_j) = (\mathcal{L}^{-1}_{\e}[\psi_j], \psi_j)
= \f{1}{|D|} \e c_{\Lambda}, \\
(\mathcal{B}^n \mathcal{L}^{-1}_{\e}[\psi_i], \psi_j)| & \leq & \|\mathcal{B}\|^n \cdot
\|\mathcal{L}^{-1}_{\e} [\psi_i]\|_{V_{\e}} \cdot \|\psi_j\|_{V_{\e}^*}  \\
& \lesssim & \e^n \|\mathcal{L}^{-1}_{\e}[\psi_j]\|_{V_{\e}} \cdot \|\psi_j\|_{V_{\e}^*} \\
& = &  \e^n (\mathcal{L}^{-1}_{\e}[\psi_j], \psi_j) \\
& = & \e^n \cdot \f{1}{|D|} \e c_{\Lambda}.
\eeas

We now analyze the term $B^{(1)}_{i,j}= -( \mathcal{B} \mathcal{L}^{-1}_{\e}[\psi_i], \psi_j)$.
Recall that $\mathcal{B}= \mathcal{B}(k)= \mathcal{L}_{\e}^{-1}\mathcal{R}_{\e}(k)$ and
$\mathcal{R}_{\e}= \mathcal{R}_{\e, 1} + \mathcal{R}_{\e, 2} = \mathcal{R}_{\e, 1}+ \mathcal{R}_{\e, 2, 0}+ \mathcal{R}_{\e, 2, 1}$. We can decompose $B^{(1)}_{i,j}$ into the following three terms
\beas
B^{(1,1)}_{i,j}&=& ( -\mathcal{L}_{\e}^{-1}\mathcal{R}_{\e,1} \mathcal{L}^{-1}_{\e}[\psi_i], \psi_j), \\
B^{(1,2)}_{i,j}&=&( -\mathcal{L}_{\e}^{-1}\mathcal{R}_{\e, 2,0} \mathcal{L}^{-1}_{\e}[\psi_i], \psi_j), \\
B^{(1,3)}_{i,j}&=&( -\mathcal{L}_{\e}^{-1}\mathcal{R}_{\e, 2,1} \mathcal{L}^{-1}_{\e}[\psi_i], \psi_j).
\eeas

Then the rest of the proof follows from Lemmas \ref{lem-r1} and \ref{lem-r2} and the following identities:
$$
G^{ex}(z^{(i)},z^{(j)}, 0)= \f{1}{2 \pi |z^{(i)}-z^{(j)}|}, \quad
\f{\p G^{ex}(z^{(i)},z^{(j)}, 0)}{\p k} = \f{\sqrt{-1}}{2 \pi}.
$$
\end{proof}
\medskip

We are ready to find the characteristic values and the corresponding characteristic vectors for $k^2- A(k, \e)$.

Observe that
\beas
A(k, \e)&=& \f{\e c_{\Lambda}}{ |D|} Id- \big(\alpha_0+ T \big)\left( \f{\e c_{\Lambda}}{|D|} \right)^2|D|
- (\f{\e c_{\Lambda}}{ |D|} )^2|D|kS + O(k^2)O(\e^2) + O(\e^3)\\
&=& \f{\e c_{\Lambda}}{ |D|}\left(Id - \big(\alpha_0+ T \big) \e c_{\Lambda}
- kS \e c_{\Lambda} + O(k^2)O(\e) + O(\e^2)    \right) \\
&=& \f{\e c_{\Lambda}}{ |D|}(Id + O(\e)).
\eeas
By using the identity
$$
\sqrt{1+x} = 1+ \f{1}{2}x - \f{1}{4}x^2 + \ldots \quad \mbox{for}\,\,|x| < 1,
$$
we can find a matrix $F=F(k, \e)$ such that $A(k, \e)= F^2(k,\e)$. In fact,
we have
\be
F(k, \e)= \sqrt{\f{\e c_{\Lambda}}{ |D|}} \left(Id- \f{1}{2}\big(\alpha_0+ T \big) \e c_{\Lambda}
  - \f{1}{2}k S\e c_{\Lambda} + O(k^2)O(\e) + O(\e^2)\right).
\ee

It is clear that the following factorization holds
$$
k^2- A(k, \e) =\big( k- F(k, \e)\big)\cdot \big( k+ F(k, \e)\big).
$$
Thus, the characteristic values and the corresponding characteristic vectors for $k^2- A(k, \e)$ are those for $k- F(k, \e)$ and $k+ F(k, \e)$, which we investigate henceforth.

\begin{lem}
There exist exactly M characteristic values for the matrix-valued analytic function $k-F(k, \e)$ and $k+F(k, \e)$ in the neighborhood $W$, respectively. More precisely, for $1\leq j \leq M$, the characteristics values are given as
(\ref{formula-k1-1}) and  (\ref{formula-k1-2}).
Moreover, after normalization, the corresponding characteristic vectors are given as
\bea
Y_{j,1} &= Y_j + \sqrt{\e}\sum_{i\neq j} \f{1}{\beta_j- \beta_i}\sqrt{\f{c_{\Lambda}}{|D|}}Y_i Y_i^t S Y_j + O(\e), \label{formula-Y1} \\
Y_{j,2} &= Y_j - \sqrt{\e}\sum_{i\neq j} \f{1}{\beta_j- \beta_i}\sqrt{\f{c_{\Lambda}}{|D|}}Y_i Y_i^t S Y_j + O(\e).\label{formula-Y2}
\eea
\end{lem}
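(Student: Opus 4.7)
The strategy is to adapt the single-resonator argument, but now $F(k,\e)$ is matrix-valued, so we must expand eigenvalue and eigenvector simultaneously. Rewriting $(F(k,\e)-kId)Y=0$ using the explicit form of $F$ yields
\[
(k-\tau_1\sqrt{\e})Y \;=\; \tau_1\sqrt{\e}\,A(k,\e)Y,
\]
where $A(k,\e)=-\tfrac{1}{2}(\alpha_0 Id + T)c_\Lambda\e - \tfrac{1}{2}kSc_\Lambda\e + O(k^2\e)+O(\e^2)$. I would use the ansatz
\[
k=\tau_1\sqrt{\e}+\tau_{3,j}\e^{3/2}+\tau_{4,j}\e^2+O(\e^{5/2}),\qquad Y=Y_j+\sqrt{\e}\,Z^{(1)}+O(\e),
\]
and match powers of $\sqrt{\e}$; the absence of an $\e$-term in $k$ follows because $F$ has no $\e^{1/2}$-correction inside its bracket.

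At order $\e^{3/2}$ the equation forces $TY_j=\beta_j Y_j$ (using distinctness of the $\beta_j$'s to single out eigendirections) and, projected on $Y_j$, gives $\tau_{3,j}=-\tfrac{1}{2}\tau_1(\alpha_0+\beta_j)c_\Lambda$, which is (\ref{deftau3}). At order $\e^2$ the matching equation reads
\[
\tau_{3,j}Z^{(1)}+\tau_{4,j}Y_j \;=\; -\tfrac{1}{2}\tau_1 c_\Lambda(\alpha_0 Id+T)Z^{(1)}-\tfrac{1}{2}\tau_1^2 c_\Lambda SY_j.
\]
Projecting onto $Y_j$ the $Z^{(1)}$-terms cancel (because of the value of $\tau_{3,j}$ just obtained), leaving $\tau_{4,j}=-\tfrac{1}{2}\tau_1^2 c_\Lambda\,Y_j^t SY_j=-\tfrac{1}{2}(c_\Lambda^2/|D|)Y_j^t SY_j$, which is (\ref{deftau4}). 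Projecting onto $Y_i$ with $i\neq j$ determines the off-diagonal components $Y_i^t Z^{(1)}=\sqrt{c_\Lambda/|D|}\,(\beta_j-\beta_i)^{-1}Y_i^t SY_j$; imposing the normalization $Y_j^t Z^{(1)}=0$ then reconstructs (\ref{formula-Y1}). The argument for $k+F(k,\e)$ is identical, up to an overall sign in the $\sqrt{\e}$ and $\e^{3/2}$ coefficients, producing (\ref{formula-k1-2}) and (\ref{formula-Y2}); note that the $\e^2$ correction $\tau_{4,j}$ has the same sign in both branches because it originates from the $kS$ term and picks up an extra sign when $k\to -k$.

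To upgrade the formal expansion to a rigorous existence statement, I would apply the generalized Rouche's theorem on a disk of radius $\sim\e^{5/2}$ around each approximate root $\tau_1\sqrt{\e}+\tau_{3,j}\e^{3/2}+\tau_{4,j}\e^2$, comparing $k-F(k,\e)$ with its leading scalar part $(k-\tau_1\sqrt{\e})Id$ restricted to the $Y_j$-eigenspace (decomposed via $Y$); combined with Lemma \ref{lem-charact-mul}, which already guarantees exactly $2M$ characteristic values in $W$, this locates one characteristic value in each of the $2M$ disjoint neighborhoods and forces each to be of order one. The main obstacle is the bookkeeping of remainders at the $\e^2$ order, where the $O(k^2\e)$ and $O(\e^2)$ matrix terms inside $A(k,\e)$ contribute scalar multiples of $Id$ that must not disturb the off-diagonal Fredholm-alternative used to extract $Z^{(1)}$; a careful projection shows they contribute uniformly to the $O(\e^{5/2})$ remainder. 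Once this is done, the eigenvector formula falls out of standard non-degenerate perturbation theory on the $T$-spectrum.
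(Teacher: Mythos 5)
Your formal expansion is sound: plugging the ansatz $k=\tau_1\sqrt{\e}+\tau_{3,j}\e^{3/2}+\tau_{4,j}\e^2+\ldots$, $Y=Y_j+\sqrt{\e}Z^{(1)}+\ldots$ into $k-F(k,\e)$ and projecting onto $Y_j$ and onto $Y_i$, $i\neq j$, is exactly the computation the paper performs (the paper organizes it by conjugating with $Y$ and working with the diagonal matrix $Q(k,\e)$, which is equivalent to your projections), and your values of $\tau_{3,j}$, $\tau_{4,j}$, $Y_i^tZ^{(1)}$ and the sign discussion for the branch $k+F(k,\e)$ all agree with (\ref{formula-k1-1})--(\ref{formula-k1-2}) and (\ref{formula-Y1})--(\ref{formula-Y2}).

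The gap is in the step that upgrades this to existence and multiplicity one. You propose to apply generalized Rouch\'e on disks of radius $\sim\e^{5/2}$ centered at $\tau_1\sqrt{\e}+\tau_{3,j}\e^{3/2}+\tau_{4,j}\e^2$, comparing $k-F(k,\e)$ with ``its leading scalar part $(k-\tau_1\sqrt{\e})Id$''. This comparison cannot work: the center of each disk lies at distance $\approx|\tau_{3,j}|\e^{3/2}\gg\e^{5/2}$ from $\tau_1\sqrt{\e}$, so $(k-\tau_1\sqrt{\e})Id$ has \emph{no} characteristic value inside your disks, and Rouch\'e (if its hypothesis held) would give zero roots there rather than one; moreover the hypothesis itself fails, since on $\p W_j$ the discarded term $-\f{1}{2}\tau_1 c_\Lambda(\alpha_0+T)\e^{3/2}$ is of exactly the same size as $|k-\tau_1\sqrt{\e}|\approx|\tau_{3,j}|\e^{3/2}$, so the ratio controlling Rouch\'e is $\approx 1$, not $<1$. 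The comparison function must already contain the $(\alpha_0+T)\e^{3/2}$ term, i.e.\ it must resolve the splitting of the $M$-fold degenerate value: the paper takes $F^{(0)}(k,\e)=\sqrt{\e c_\Lambda/|D|}\,(Id-\f{1}{2}(\alpha_0+T)\e c_\Lambda)$, diagonalizes $k-F^{(0)}$ by $Y$, and applies Rouch\'e on disks $W_j=\{|k-\lambda_{j,\e}|\leq t_j\e^2\}$ around the exact roots $\lambda_{j,\e}$ of the diagonal entries; there $F-F^{(0)}=O(\e^2)$ uniformly (constant independent of $t_j$), while the model is $\geq t_j\e^2$ on $\p W_j$ and $\gtrsim\e^{3/2}$ in the other diagonal slots (this is where distinctness of the $\beta_j$ enters), so choosing $t_j$ large closes the argument and yields one simple characteristic value per disk. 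If you insist on radius $\e^{5/2}$ you would also have to build the $\tau_{4,j}\e^2$ correction and the off-diagonal $\e^2$ couplings into the model, which your sketch does not do. Finally, your appeal to Lemma \ref{lem-charact-mul} concerns the operator-valued function $\mathcal{A}_{\e}(k)$, not the matrix function $k\mp F(k,\e)$; the count of $M$ characteristic values of $k\mp F$ in all of $W$ needs its own (easy) Rouch\'e argument on $\p W$, as the paper notes, before you can conclude that the $M$ localized roots are all of them. Your closing remark that the $O(k^2\e)+O(\e^2)$ remainders are ``scalar multiples of $Id$'' is also not true (nor needed): they are general matrices, but they enter only at order $\e^{5/2}$ and so fall into the remainder.
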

\begin{proof}
Step 1. Define
$$
F^{(0)}(k, \e)= \sqrt{\f{\e c_{\Lambda}}{ |D|}} \left(Id- \f{1}{2}\big(\alpha_0+ T \big) \e c_{\Lambda} \right).
$$
By Lemma \ref{lem-estimate-B},
$$
F(k, \e) = F^{(0)}(k, \e) + O(k)O(\e)+ O(k^2)O(\e) + O(\e^2),  \,\,\, k\in W.
$$

We first find the characteristic values and the corresponding characteristic vectors for the matrix-valued function
$k- F^{(0)}(k, \e)$.

It is clear that
\[
Y^{-1}\big(k- F^{(0)}(k, \e)\big)Y = Q(k, \e)=: \left( \begin{array}{cccc}
Q_1(k, \e) &  &  & \\
 & Q_2(k, \e) &  &\\
 &  &\ldots&   \\
 &   &    &  Q_M(k, \e)\end{array} \right),
 \]
where $Q_{jj}(k, \e)= Q_j(k, \e) = k - \sqrt{\f{\e c_{\Lambda}}{ |D|}} + \f{1}{2}(\alpha_0+ \beta_j )\left(\f{ c_{\Lambda}}{|D|}\right)^{\f{1}{2}}c_{\Lambda}\e^{\f{3}{2}}$.
Thus, we obtain $M$ linear equations
$$
Q_j(k, \e)=0, \quad 1\leq j \leq M,
$$
whose solutions are given by
$$
\lambda_{j, \e}= \sqrt{\f{ c_{\Lambda}}{|D|}} \e^{\f{1}{2}} -\f{1}{2}(\alpha_0+ \beta_j )\left(\f{ c_{\Lambda}}{|D|}\right)^{\f{1}{2}}c_{\Lambda}\e^{\f{3}{2}},  \quad 1\leq j \leq M.
$$
%

Therefore, we can conclude that each of the solutions $\lambda_{j, \e}$ gives a characteristic value to the matrix-valued analytic function $k- F^{(0)}(k, \e)$ and the corresponding characteristic vector is $Y_j$.

\medskip

Step 2.
We now apply the generalized Rouche's theorem to obtain the existence of characteristic values for $k-F(k, \e)$.
Observe that
\[
\big(k- F^{(0)}(k, \e)\big)^{-1}= Y\left( \begin{array}{cccc}
Q_1(k, \e)^{-1} &  &  & \\
 & Q_2(k, \e)^{-1} &  &\\
 &  & \ldots&   \\
 &   &    &  Q_M(k, \e)^{-1}\end{array} \right)Y^t,
\]

For each $1\leq j\leq M$,
we define domain $W_j=: \{k: |k- \lambda_{j, \e}|\leq t_j \e^{2}\}$ where $t_j>0$. Since
$F(k, \e)-F^{(0)}(k, \e)= O(k)O(\e^{\f{3}{2}}) + O(k^2)O(\e^{\f{3}{2}}) + O(\e^{\f{5}{2}})$ and the $\beta_j$'s are pairwise different, we can deduce that for $\e$ sufficiently small, there exists $t_j$ such that the following inequality holds
$$
\| (F^{(0)}(k, \e))^{-1} \big( F(k, \e)-F^{(0)}(k, \e) \big)\| <1  \quad \mbox{for  }\, k \in \p W_j.
$$

Then the generalized Rouche's theorem yields that there exists exactly one characteristic value $k_{0,\e,j, 1}$ for $k-F(k, \e)$ in the domain $W_j$.
Therefore, we can conclude that for sufficiently small $\e$, there exist M characteristic values, for the matrix-valued analytic function $k- F(k, \e)$  in the domain $\bigcup_{j=1}^{M} W_j$. On the other hand, it is easy to show that there are M characteristic values for $k- F(k, \e)$ in the domain $W= \{k: |k| \leq \f{1}{2}k_1\}$. Therefore, the characteristic values we just calculated are exactly the characteristic values for $k- F(k, \e)$ in the $W$.

\medskip

Step 3.
We determine the forms of the characteristic vectors.
We assume that
$$
\big(k_{0, \e, j,1} - F(k_{0,\e,j,1}, \e)\big)Y_{j,\e}= 0, \quad \| Y_{j,\e}\| =O(1).
$$

We first show that $Y_{j,\e} = Y_j + O(\sqrt{\e})$.
Indeed, note that
$$
k_{0, \e, j, 1} - F(k_{0,\e, j, 1}, \e) = k_{0, \e, j, 1} - F^{(0)}(k_{0,\e,j, 1}, \e)+ O(\e^{2}),
$$
and
\[
k_{0, \e, j, 1} - F^{(0)}(k_{0,\e,j, 1}, \e) =  Y Q(k_{0,\e,j, 1}, \e)Y^t,
\]

We get
\[
Q(k_{0,\e,j, 1}, \e) Y ^t Y_{j,\e} = O(\e^{2}).
\]
Observe that
\beas
Q_i(k_{0, \e, j, 1}, \e) =  O(\e^{2}) \quad \mbox{for } \,i=j, \\
Q_i(k_{0, \e, j, 1}, \e) \geq  O(\e^{\f{3}{2}}) \quad \mbox{for } \,i \neq j,
\eeas
where we used the assumption that $\beta_j$'s are pairwise different in the second inequality above.
Therefore, $Y_j \cdot Y_{j, \e} = O(1)$ and $Y_i \cdot Y_{j, \e} =  O(\sqrt{\e})$ for $i \neq j$. It follows that $Y_{j, \e}$ can be written in the form
$$
Y_{j, \e} = Y_j +  O(\sqrt{\e}).
$$

\medskip

Step 4. From the results in the previous two steps, we can write the $j$-th characteristic value of $k-F(k,\e)$ and its associated
characteristic vector in the following form
\beas
k_{0,\e, j, 1} &=& \lambda_{j,\e} + \tau_{4,j}\e^2 + O(\e^{\f{5}{2}}), \\
Y_{j, \e} &=& Y_j + \e^{\f{1}{2}}Y^{(1)}_j + O(\e^{}).
\eeas
We now determine $\tau_{4,j}$ and $Y^{(1)}_j$.
It is important to note that we need to impose the following normalization condition
\be
 (Y_j,  Y^{(1)}_j) =0
\ee
in order to uniquely determine the unknown constants.
We write
$$
 Y^{(1)}_j = \sum_{i\neq j} y_iY_i.
$$

We have
$$
\big(k_{0, \e, j, 1}-F(k_{0, \e, j, 1},\e)\big)Y_{j, \e}=0.
$$

Note that
\beas
k_{0, \e, j, 1}- F(k_{0, \e, j, 1}, \e) &=&
 Y \left( Q(k_{0, \e, j, 1}, \e) +\f{1}{2}\sqrt{\f{c_{\Lambda}}{|D|}}\cdot c_{\Lambda} \e^{\f{3}{2}}
k_{0, \e, j, 1}Y^{t}SY + O(\e^{\f{5}{2}})  \right)Y^t\\
&=&
 Y \left( Q(k_{0, \e, j, 1}, \e) +\f{1}{2}\f{c_{\Lambda}^2}{|D|} \e^{2}
Y^{t}SY + O(\e^{\f{5}{2}})  \right)Y^t.
\eeas
Thus,
\[
  \left(  Q(k_{0, \e, j, 1}, \e)
+\f{1}{2}\f{c_{\Lambda}^2}{|D|}\e^{2}
Y^{t}SY  \right) Y^t \left(Y_j + \e^{\f{1}{2}}Y^{(1)}_j + O(\e)\right)= O(\e^{\f{5}{2}}).
\]
Since $Q(k_{0, \e, j, 1}, \e)= O(\e^{\f{3}{2}})$, we further get
\[
 Q(k_{0, \e, j, 1}, \e) \left(Y^t Y_j + \e^{\f{1}{2}}Y^t Y^{(1)}_j\right) =- \f{1}{2}\f{c_{\Lambda}^2}{|D|}\e^{2}
Y^{t}SY_j + O(\e^{\f{5}{2}}).
\]

It is easy to see that $Y^t Y_j= e_j$ and $Y^t Y^{(1)}_j= \sum_{i\neq j} y_ie_i$. Apply both sides of the above equation from left by $e_i^t$, $i=1,2,\ldots,M$,  we can derive that
\beas
Q_i(k_{0, \e, j, 1}) y_i&= & -\f{1}{2}\f{c_{\Lambda}^2}{|D|}\e^2 Y_i^t SY_j, \quad i\neq j, \\
Q_j(k_{0, \e, j, 1}) &=& -\f{1}{2}\f{c_{\Lambda}^2}{|D|}\e^2 Y_j^t SY_j.
\eeas
Since
\[
Q_i(k_{0, \e, j, 1}) = \f{1}{2}\sqrt{\f{c_{\Lambda}}{|D|}}c_{\Lambda} (\beta_i- \beta_j)\e^{\f{3}{2}} + O(\e^2), \quad i\neq j, \quad Q_j(k_{0, \e, j, 1}) = \tau_{4, j} \e^2,
\]
we obtain
$$
\tau_{4, j}= - \f{1}{2}\f{c_{\Lambda}^2}{|D|} Y_j^t SY_j,
$$
and
$$ y_i = \f{1}{\beta_j- \beta_i}\sqrt{\f{c_{\Lambda}}{|D|}} Y_i^t SY_j,
$$
which proves (\ref{formula-k1-1}) and (\ref{formula-Y1}) immediately.
By a similar procedure, we can prove (\ref{formula-k1-2}) and (\ref{formula-Y2}). This completes the proof of the lemma. \end{proof}

\medskip

As a consequence of the above result, the following lemma holds.
\begin{lem}  \label{lem-charact-a-mul}
There exist exactly 2M characteristic values of order one for the matrix-valued analytic function $k^2-A(k, \e)$ in the neighborhood $W$. They have the forms as given in (\ref{formula-k1-1}) and (\ref{formula-k1-2}).
Moreover, the corresponding characteristic vectors are given by (\ref{formula-Y1}) and (\ref{formula-Y2}).
\end{lem}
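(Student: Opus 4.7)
The plan is to deduce this lemma directly from the preceding factorization $k^2 - A(k,\e) = (k - F(k,\e))(k+F(k,\e))$ together with the characteristic value and eigenvector information already obtained for the factors $k \mp F(k,\e)$.

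First I would argue that any characteristic value of $k^2 - A(k,\e)$ in $W$ must be a characteristic value of at least one of the two analytic matrix functions $k - F(k,\e)$ or $k + F(k,\e)$. Since the two factors commute (both take values in $M\times M$ matrices built from the same $F(k,\e)$), a vector $v$ lies in the kernel of the product at $k = k_*$ if and only if it lies in the kernel of one of the two factors evaluated at $k_*$, provided $k_*$ is not simultaneously a zero of both. From the previous lemma we know that $k - F(k,\e)$ has exactly $M$ roots of the form $\tau_1 \e^{1/2} + O(\e^{3/2})$, while $k + F(k,\e)$ has $M$ roots of the form $-\tau_1\e^{1/2} + O(\e^{3/2})$. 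Because $\tau_1 > 0$ and $\e$ is small, these two sets are disjoint, so no coincidence occurs. This yields exactly $2M$ distinct characteristic values, which already appear in the asymptotic forms (\ref{formula-k1-1}) and (\ref{formula-k1-2}).

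Next I would match the characteristic vectors. At $k = k_{0,\e,j,1}$, the factor $k + F(k,\e)$ is invertible (its value is of order $2\tau_1\sqrt{\e}$, away from zero), so the null space of $k^2 - A(k,\e)$ coincides with that of $k - F(k,\e)$. Hence $Y_{j,1}$ from (\ref{formula-Y1}) serves as the corresponding characteristic vector. The symmetric argument gives $Y_{j,2}$ from (\ref{formula-Y2}) as the characteristic vector at $k_{0,\e,j,2}$. To see that each characteristic value has order one, I would use Lemma~\ref{lem-charact-mul}, which already provides an upper bound of $2M$ on the total number of characteristic values counted with multiplicity in $W$; since we have exhibited $2M$ distinct ones, each must have order exactly one.

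The only real subtlety is confirming that the multiplicities add correctly across the factorization. The clean way to handle this is to observe that, in a neighborhood of $k_{0,\e,j,1}$, the factor $k+F(k,\e)$ is a holomorphic family of invertible matrices, so multiplication by it does not change the multiplicity in the sense of the generalized argument principle. Therefore the multiplicity of $k_{0,\e,j,1}$ as a characteristic value of $k^2 - A(k,\e)$ equals its multiplicity as a characteristic value of $k - F(k,\e)$, which was shown to be one in the preceding lemma via the localization inside $W_j$. Combining all these observations, the statement follows.
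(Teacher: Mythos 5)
Your proposal is correct and takes essentially the same route as the paper, which states this lemma as an immediate consequence of the preceding lemma without spelling out the argument. You have simply filled in the details of that deduction: using the commuting factorization $k^2-A(k,\e)=(k-F(k,\e))(k+F(k,\e))$, locating the $2M$ characteristic values inside the two factors, observing that the two sets of roots are disjoint since $\tau_1>0$, matching characteristic vectors by noting the other factor is invertible near each root, and deducing multiplicity one because multiplication by a locally invertible holomorphic matrix family does not change the multiplicity in the generalized argument-principle sense.
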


Finally, Proposition \ref{prop-resonances} follows from Lemma \ref{lem-charact-a-mul} and \ref{lem-connect-mul}.

\subsection{The inhomogeneous problem }

We now solve the inhomogeneous problem
\be  \label{resonance-equation-multi}
 \mathcal{A}_{\e}[\mu]= (\mathcal{L}_{\e} - \f{\mathcal{K}_{\e}}{k^2} + \mathcal{R}_{\e})[\mu] =f.
\ee
where $f\in \mathcal{C}^{\infty}$ for $k\in W\backslash \{0\}$ and $k\in \R$.
Let $\mu$ be the solution, then
\be \label{equ-mu-multi}
\mu - \sum_{1\leq j\leq M}\f{(\mu, \psi_j)  \mathcal{L}_{\e, 1}^{-1} [\psi_j]}{k^2} = \mathcal{L}_{\e, 1}^{-1}[f].
\ee
Denote by
\beas
\psi &=& (\psi_1, \psi_2,\ldots,\psi_M), \\
\mathcal{L}_{\e, 1}^{-1}[\psi] &=& (\mathcal{L}_{\e, 1}^{-1} \psi_1,  \mathcal{L}_{\e, 1}^{-1} [\psi_2],\ldots, \mathcal{L}_{\e, 1}^{-1}[\psi_M])^t,\\
x&=&(x_1, x_2,\ldots,x_M)^t =\big((\mu, \psi_1),(\mu, \psi_2),\ldots, (\mu, \psi_M) \big)^t, \\
b&=&(b_1, b_2,\ldots,b_M)^t= \big(( \mathcal{L}_{\e, 1}^{-1}[f], \psi_1), ( \mathcal{L}_{\e, 1}^{-1}[f], \psi_2),\ldots,( \mathcal{L}_{\e, 1}^{-1}[f], \psi_M)\big)^t\\
\mathcal{F}&=& (f(z^{(1)}), f(z^{(2)}),\ldots, f(z^{(M)}))^t.
\eeas
Recall that $ A(k, \e)_{i,j} = (\mathcal{L}_{\e, 1}^{-1} [\psi_j], \psi_i )$. Apply $k^2 \psi_i$ on both sides of (\ref{equ-mu1}) to obtain
\be  \label{equation-x-mul}
\big(k^2-A(k, \e)\big)x =k^2b.
\ee
We now derive inverse for the matrix $k^2-A(k, \e)$.

\begin{lem}  \label{lem-inverse-mul}
The inverse of the matrix $k^2-A(k, \e)$ has the following representation:
$$
\big(k^2 - A(k, \e)\big)^{-1} =  Y D_2^{-1} D_1^{-1} Y^{t} + Q(k, \e),
$$
where $D_1$ and $D_2$ are the diagonal matrices with $(D_1)_{jj}= k- k_{0,\e, j, 1}$ and $(D_2)_{jj}= k- k_{0, \e, j,2}$, and
$Q(k, \e)$ is a matrix with
$$
Q_{ij}(k, \e) = \f{O(1)}{ (k- k_{0,\e,j, 1})(k- k_{0, \e, j,2})} = \f{O(1)}{k- k_{0,\e,j, 1}} +\f{O(1)}{k- k_{0,\e, j, 2}}.
$$
\end{lem}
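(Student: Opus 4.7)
The plan is to exploit the factorization $k^2 - A(k, \e) = (k - F(k, \e))(k + F(k, \e))$ established in the preceding analysis, together with the commutativity of the two factors (both are polynomial expressions in $F(k,\e)$ alone). This reduces the problem to inverting each linear pencil $k \mp F(k, \e)$ and then combining: $(k^2 - A(k, \e))^{-1} = (k - F(k, \e))^{-1}(k + F(k, \e))^{-1}$. I would handle each factor separately, paralleling the role played by Lemma \ref{lem-inverse} in the single-resonator case.

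To invert $k - F(k,\e)$, I would use the diagonalization $Y^{-1}(k - F^{(0)}(k,\e))Y = \mathrm{diag}(Q_j(k,\e))$ already established in Step 1 of the proof of existence of characteristic vectors, together with the estimate $F(k,\e) - F^{(0)}(k,\e) = O(k)O(\e^{3/2}) + O(k^2)O(\e^{3/2}) + O(\e^{5/2})$ for $k\in W$. Since $T$ is symmetric, $Y$ is orthogonal, so $Y^{-1}=Y^t$, and conjugation gives $Y^t(k - F(k,\e))Y = \mathrm{diag}(Q_j(k,\e)) + \mathcal{E}_1(k,\e)$ with $\mathcal{E}_1$ small. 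Factoring out the diagonal part and expanding $(I + \mathrm{diag}(Q_j)^{-1}\mathcal{E}_1)^{-1}$ as a Neumann series (exactly as in the single-resonator argument used to derive equation (\ref{equ-h})) yields $(k - F(k, \e))^{-1} = Y D_1^{-1} Y^t + R_1(k,\e)$. The identification of the pole locations with the actual characteristic values $k_{0,\e,j,1}$ rather than the approximants $\lambda_{j,\e}$ is justified because they differ by only $O(\e^2)$, an error absorbable into $R_1$. An identical argument gives $(k + F(k, \e))^{-1} = Y D_2^{-1} Y^t + R_2(k,\e)$, with $R_1,R_2$ carrying only simple poles at the respective $k_{0,\e,j,*}$ with strictly smaller residues than the leading terms.

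Multiplying the two inverses and using $Y^t Y = I$ together with the commutation of the diagonal matrices $D_1^{-1}$ and $D_2^{-1}$, I obtain
$$(k^2 - A)^{-1} = Y D_2^{-1} D_1^{-1} Y^t + Y D_1^{-1} Y^t R_2 + R_1 Y D_2^{-1} Y^t + R_1 R_2,$$
and the last three terms are grouped into $Q(k,\e)$. A partial fraction expansion on the resulting denominators produces the claimed per-entry representation $Q_{ij} = O(1)/[(k-k_{0,\e,j,1})(k-k_{0,\e,j,2})]$.

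The main obstacle I expect is verifying that the $(i,j)$ entry of $Q$ really carries only the two poles at $k_{0,\e,j,1}$ and $k_{0,\e,j,2}$ with $O(1)$ numerators, and not additional poles at $k_{0,\e,l,*}$ for $l\neq j$. This demands a careful residue-by-residue analysis of how the off-diagonal perturbation $\mathcal{E}_1$ propagates through the Neumann series for $(k-F)^{-1}$: one must confirm that the contributions from columns $l\neq j$ of the perturbation matrix cancel or reduce to $O(1)$ numerators after the partial-fraction recombination, which ultimately rests on the order estimates furnished by the generalized Rouché theorem applied separately in each disk $W_j$.
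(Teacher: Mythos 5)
Your overall strategy (factorize $k^2-A=(k-F)(k+F)$, invert each factor by near-diagonalization with $Y$, multiply) is the same as the paper's, but the way you invert $k-F(k,\e)$ has a genuine gap. You conjugate by the unperturbed $Y$, write $Y^t(k-F)Y=\mathrm{diag}(Q_j(k,\e))+\mathcal{E}_1$, and invert by a Neumann series in $\mathrm{diag}(Q_j)^{-1}\mathcal{E}_1$. The entries $Q_j(k,\e)=k-\lambda_{j,\e}$ vanish at the \emph{real} points $\lambda_{j,\e}$, while $\mathcal{E}_1=Y^t(F^{(0)}-F)Y=O(\e^{2})$ for $k\sim\sqrt{\e}$; hence the series diverges whenever $|k-\lambda_{j,\e}|\lesssim \e^{2}$, i.e. precisely in a real neighborhood of the resonant frequencies where the lemma is later used (the integrals $I_3,I_4$ run over real $k$ through this window, and the true resonances lie only $O(\e^2)$ off the real axis). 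Moreover, where the series does converge it produces poles (of all orders, from the powers of $\mathrm{diag}(Q_j)^{-1}$) at the approximate values $\lambda_{j,\e}$, not at $k_{0,\e,j,1}$, and the claim that the $O(\e^2)$ shift is ``absorbable into $R_1$'' fails: $\frac{1}{k-\lambda_{j,\e}}-\frac{1}{k-k_{0,\e,j,1}}$ is of size $O(\e^2)\big/\big(|k-\lambda_{j,\e}|\,|k-k_{0,\e,j,1}|\big)$, which is unbounded near the pole and would put spurious poles on the real axis, where $(k^2-A)^{-1}$ is in fact analytic (its only characteristic values in $W$ are the complex $k_{0,\e,j,1},k_{0,\e,j,2}$). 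So the representation with the claimed bound on $Q$ does not follow from your construction.

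The paper sidesteps exactly this by never inverting the unperturbed diagonal: it takes the exact characteristic pairs $(k_{0,\e,j,1},Y_{j,\e})$ already computed, applies the analytic-division argument of Lemma \ref{lem-inverse} to the vector-valued function $\big(k-F(k,\e)\big)Y_{j,\e}$ (which vanishes at $k=k_{0,\e,j,1}$) to get $\big(k-F(k,\e)\big)Y_{j,\e}=(k-k_{0,\e,j,1})\big(Y_j+O(\sqrt{\e})\big)$, hence $\big(k-F\big)\big(Y+O(\sqrt{\e})\big)=\big(Y+O(\sqrt{\e})\big)D_1$, and then inverts this exact identity; the same for $k+F$, after which multiplying and using $\big(Y+O(\sqrt{\e})\big)^{-1}=Y^t+O(\sqrt{\e})$ gives the stated form. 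If you replace your Neumann-series step by this factor-out-the-zero argument, the rest of your plan goes through. Your final worry about whether $Q_{ij}$ carries only the column-$j$ poles is, by contrast, not the essential difficulty: the paper's own $Q$ in general mixes poles from all resonances, and only the aggregate bound $\sum_j O(1)/(k-k_{0,\e,j,1})+O(1)/(k-k_{0,\e,j,2})$ is used downstream.
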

\begin{proof}
Recall that $k^2-A(k, \e)=\big(k-F(k, \e)\big)\big(k+F(k, \e)\big)$. We first find the inverse for the matrices $k-F(k, \e)$ and $k+F(k, \e)$. Note that for each $1\leq j\leq M$,
$$
\big(k_{0, \e, j, 1} -F(k_{0,\e, j,1}, \e)\big) Y_{j,\e} = 0
$$
By a similar argument as in the proof of Lemma \ref{lem-inverse}, we can derive that
$$
\big(k -F(k, \e)\big) Y_{j,\e} = (k- k_{0, \e,j, 1}) \big( Y_j+ O(\sqrt{\e})\big).
$$
Thus we have
$$
(k -F(k, \e))(Y+ O(\sqrt{\e}))  =  (Y+ O(\sqrt{\e})) D_1.
$$

This yields
$$
(k -F(k, \e)) =  (Y+ O(\sqrt{\e})) D_1 (Y+ O(\sqrt{\e}))^{-1},
$$
and consequently,
$$
(k -F(k, \e))^{-1} = (Y+ O(\sqrt{\e})) D_1^{-1} (Y+ O(\sqrt{\e}))^{-1}.
$$

Similarly, we have
$$
(k +F(k, \e))^{-1} = (Y+ O(\sqrt{\e})) D_2^{-1} (Y+ O(\sqrt{\e}))^{-1}.
$$

It follows that
$$
\big(k^2 - A(k, \e)\big)^{-1} = \big(Y+ O(\sqrt{\e})\big) D_2^{-1} \big((Y+ O(\sqrt{\e})\big)^{-1} \big(Y+ O(\sqrt{\e})\big) D_2^{-1} (Y+ O(\sqrt{\e}))^{-1}.
$$
Since
\beas
\big((Y+ O(\sqrt{\e})\big)^{-1} \big(Y+ O(\sqrt{\e})\big) &= Id + O(\sqrt{\e}),  \\
(Y+ O(\sqrt{\e}))^{-1}= Y^{-1} + O(\sqrt{\e})&= Y^{t} + O(\sqrt{\e}),
\eeas
we can deduce the conclusion of the lemma by a
straightforward calculation. This completes the proof of the lemma.\end{proof}

\medskip
We are ready to establish the following proposition.
\begin{prop}  \label{prop-inhomo1-mul}
There exists a unique solution to the integral equation (\ref{resonance-equation-multi}). Moreover, the solution, denoted by $\mu$, has the following form
$$
\mu =: \mu^{(0)}+ \mu^{(1)},
$$
where
\beas
\mu^{(0)}&=&
\mathcal{L}_{\e}^{-1}[\psi] \left[\sum_{j=1}^M \f{1}{2k_{0,\e,0}} \left( \f{1}{k-k_{0,\e,j,2}} -\f{1}{k-k_{0,\e,j,1}}\right)
Y_j Y_j^t \f{c_{\Lambda}\e}{\sqrt{|D|}} + Id \right] \mathcal{F},\\
\mu^{(1)}&=&
\sum_{j=1}^M  \f{1}{k-k_{0,\e,j,1}} O(\e^{\f{3}{2}}) + \sum_{j=1}^M  \f{1}{k-k_{0,\e,j,2}} O(\e^{\f{3}{2}})
+ O(\e^{\f{3}{2}}).
\eeas
Moreover,
\beas
x= \big((\mu, \psi_1),(\mu, \psi_2),\ldots,(\mu, \psi_M) \big)^t
 &=&  \sum_{j=1}^M \left( \f{1}{k-k_{0,\e,j,2}} -\f{1}{k-k_{0,\e,j,1}}\right) Y_j Y_j^t   \f{(c_{\Lambda}\e)^{\f{3}{2}}}{|D|}\mathcal{F}+ \f{c_{\Lambda}\e}{|D|}\mathcal{F} \\
  && + \sum_{j=1}^M  \f{1}{k-k_{0,\e,j,1}} O(\e^{2}) + \sum_{j=1}^M  \f{1}{k-k_{0,\e,j,2}} O(\e^{2})
+ O(\e^{2}).
\eeas
\end{prop}
\begin{proof}
First, by Lemmas \ref{equation-x-mul} and \ref{lem-inverse-mul}, we have
$$
x= \left(YD_1^{-1}D_2^{-1}Y^t + Q(k, \e) \right) k^2 b.
$$

Next, note that
$$
\mathcal{L}_{\e, 1}^{-1}[f] =
 \sum_{1\leq j\leq M}
 f(z^{(j)}) \sqrt{|D|}\mathcal{L}_{\e}^{-1}[\psi_j] + O(\e^{\f{3}{2}})= \sqrt{|D|}(\mathcal{L}_{\e}^{-1}[\psi])\mathcal{F} + O(\e^{\f{3}{2}})
 \quad \mbox{in $V_{\e}$},
$$
where the $O(\e^{\f{3}{2}})$ terms can be controlled  by $\|f\|_{\mathcal{C}^2}$. Thus
$$
b=\big(( \mathcal{L}_{\e, 1}^{-1}[f], \psi_1), ( \mathcal{L}_{\e, 1}^{-1}[f], \psi_2),\ldots,( \mathcal{L}_{\e, 1}^{-1}[f], \psi_M)\big)^t = b^{(0)}+ O(\e^2)= \f{c_{\Lambda}\e}{\sqrt{|D|}}\mathcal{F}+ O(\e^2).
$$

On the other hand, it is clear that
$$
 \mathcal{L}_{\e, 1}^{-1} [\psi_j] =   \mathcal{L}_{\e}^{-1}[\psi_j] + O(\e^{\f{3}{2}}) \quad \mbox{in } V_{\e}.
$$

It follows that
\beas
\mu
&=& \sum_{1\leq j\leq M}\f{x_j \mathcal{L}_{\e, 1}^{-1}[\psi_j]}{k^2} + \mathcal{L}_{\e, 1}^{-1} [f]\\
&=& ( \mathcal{L}_{\e, 1}^{-1}[\psi] )\f{1}{k^2} x +
\sqrt{|D|}(\mathcal{L}_{\e}^{-1}[\psi])\mathcal{F} + O(\e^{\f{3}{2}}) \\
&=& \big( \mathcal{L}_{\e}^{-1}[\psi] + O(\e^{\f{3}{2}})\big) \left(YD_1^{-1}D_2^{-1}Y^t + Q(k, \e) \right) (b^{(0)}+ O(\e^2)) \\
&& + \sqrt{|D|}(\mathcal{L}_{\e}^{-1}[\psi])\mathcal{F} + O(\e^{\f{3}{2}}) \\
&=: & I + II,
\eeas
where
\beas
I &=&  \mathcal{L}_{\e}^{-1}[\psi] \left( YD_1^{-1}D_2^{-1}Y^t b^{(0)} + \sqrt{|D|}\mathcal{F} \right) \\
II &=&
 O(\e^{\f{3}{2}}) \left(YD_1^{-1}D_2^{-1}Y^t + Q(k, \e) \right) (b^{(0)}+ O(\e^2))\\
&& +  \mathcal{L}_{\e}^{-1}[\psi]  \left( Q(k, \e)  \big(b^{(0)}+ O(\e^2)\big) +  YD_1^{-1}D_2^{-1}Y^t O(\e^2)\right) + O(\e^{\f{3}{2}}).
\eeas

Note that $$b^{(0)}= \f{c_{\Lambda}\e}{\sqrt{|D|}}\mathcal{F}= O(\e), \quad \mathcal{L}_{\e}^{-1} [\psi]= O(\sqrt{\e}),$$
$$D_1^{-1}D_{2}^{-1} = \f{1}{k_{0, \e, 0}}(D_2^{-1}- D_1^{-1})\big(1+ O(\e)\big),$$ and
$$Q_{ij}(k, \e) = \f{O(1)}{k- k_{0,\e,j,1}} +\f{O(1)}{k- k_{0,\e,j,2}}.$$

By straightforward calculation, we can obtain
\beas
I &=&  \mathcal{L}_{\e}^{-1}[\psi] \left[\sum_{j=1}^M \f{1}{2k_{0,\e,0}} \left( \f{1}{k-k_{0,\e,j,2}} -\f{1}{k-k_{0,\e,j,1}}\right)
  Y_j Y_j^t \f{c_{\Lambda}\e}{\sqrt{|D|}} + \sqrt{|D|}Id \right]\mathcal{F} \\
&& +\sum_{j=1}^M  \f{1}{k-k_{0,\e,j,1}} O(\e^{2}) + \sum_{j=1}^M  \f{1}{k-k_{0,\e,j,2}} O(\e^{2}),\\
II &=&
\sum_{j=1}^M  \f{1}{k-k_{0,\e,j,1}} O(\e^{2}) + \sum_{j=1}^M  \f{1}{k-k_{0,\e,j,2}} O(\e^{2})
+ O(\e^{\f{3}{2}})
\eeas

Now, we define
$$
\mu^{(0)}= \mathcal{L}_{\e}^{-1}[\psi] \left[\sum_{j=1}^M \f{1}{2k_{0,\e,0}} \left( \f{1}{k-k_{0,\e,j,2}} -\f{1}{k-k_{0,\e,j,1}}\right)
  Y_j Y_j^t \f{c_{\Lambda}\e}{\sqrt{|D|}} + \sqrt{|D|} \right]\mathcal{F},
$$
and $\mu^{(1)}= \mu -\mu^{(0)}$. This yields the desired decomposition and estimate for $\mu$.

As a consequence, we can also deduce that
$$
x=\big((\mu, \psi_1),(\mu, \psi_2),\ldots,(\mu, \psi_M) \big)^t= x^{(0)}+ x^{(1)},
$$
where
\beas
 x^{(0)}
 &=&  \sum_{j=1}^M \f{1}{2k_{0,\e,0}} \left( \f{1}{k-k_{0,\e,j,2}} -\f{1}{k-k_{0,\e,j,1}}\right) Y_j Y_j^t \sqrt{|D|} \big(\f{c_{\Lambda}\e}{|D|}\big)^2 \mathcal{F} + \f{c_{\Lambda}\e}{\sqrt{|D|}}\mathcal{F},\\
 &=&  \sum_{j=1}^M \left( \f{1}{k-k_{0,\e,j,2}} -\f{1}{k-k_{0,\e,j,1}}\right) Y_j Y_j^t \f{(c_{\Lambda}\e)^{\f{3}{2}}}{|D|}\mathcal{F} + \f{c_{\Lambda}\e}{\sqrt{|D|}}\mathcal{F},\\
  x^{(1)} &=& \sum_{j=1}^M  \f{1}{k-k_{0,\e,j,1}} O(\e^{2}) + \sum_{j=1}^M  \f{1}{k-k_{0,\e,j,2}} O(\e^{2})
+ O(\e^{2}).
\eeas
This completes the proof of the proposition.\end{proof}


\subsection{Proof of Theorem \ref{thm-multi}} \label{sec-sub-green-multi}


By taking the inhomogeneous term $f$ in (\ref{resonance-equation-multi}) to be $-G^{ex}( \cdot, x_0, k)$, we can apply Proposition \ref{prop-inhomo1-mul} to obtain the leading asymptotic of the perturbed Green function $G^{ex}_{\e}$ in the exterior domain. We now give more details below.

First, by formula (\ref{green-exterior}), we see that
$$
G_{\e}^{ex}(x, x_0, k)
= G^{ex}(x, x_0, k) + \int_{\Lambda_{\e}} G^{ex}(x, y, k) \mu(y)dy.
$$

For each $1\leq j \leq M$, we have
$$G^{ex}(x, y, k)=  G^{ex}(x, z^{(j)}, k)+G^{ex}_j(x, y, k),$$
where $G^{ex}_j(x, y, k) =(y-z^{(j)}) \cdot g_j(x, y, k)$
for some smooth function $g_j(x, y, k)$.
As in the proof of Theorem \ref{thm-single}, we have
\be  \label{estimation1-mul}
\|G^{ex}_j(x, \cdot, k)\|_{V_{\e, j}^*} \leq O(\e^{\f{3}{2}} ).
\ee
We decompose the integral $\ds \int_{\Lambda_{\e}} G^{ex}(x, y, k) \mu(y)dy$ in the following way:
\beas
\int_{\Lambda_{\e}} G^{ex}(x, y, k) \mu(y)dy
&=& \sum_{j=1}^M \int_{\Lambda_{\e, j}} G^{ex}(x, y, k) \mu_j(y)dy \\
&=& \sum_{j=1}^M \int_{\Lambda_{\e, j}} G^{ex}(x, z^{(j)}, k) \mu_j^{(0)}(y)dy +
\sum_{j=1}^M \int_{\Lambda_{\e, j}} G^{ex}(x, z^{(j)}, k) \mu_j^{(1)}(y)dy \\
&& + \sum_{j=1}^M \int_{\Lambda_{\e, j}} G^{ex}_j(x, y, k) \mu_j(y)dy \\
&=& I + II+ III.
\eeas

We next investigate each of the above mentioned terms.
It is clear that
\beas
I &=&  \sqrt{|D|} \mathcal{G}(x, k)^t \big((\mu, \psi_1),(\mu, \psi_2),\ldots, (\mu, \psi_M) \big)^t \\
 &=& - \sqrt{|D|} \mathcal{G}(x, k)^t \sum_{j=1}^M \left( \f{1}{k-k_{0,\e,j,2}} -\f{1}{k-k_{0,\e,j,1}}\right) Y_j Y_j^t  \f{(c_{\Lambda}\e)^{\f{3}{2}}}{|D|}\mathcal{G}(x_0, k) \\
 && - \sqrt{|D|} \mathcal{G}(x, k)^t \f{c_{\Lambda}\e}{\sqrt{|D|}}\mathcal{G}(x_0, k) \\
&=&  - \sum_{j=1}^M \left( \f{1}{k-k_{0,\e,j,2}} -\f{1}{k-k_{0,\e,j,1}}\right)\f{(c_{\Lambda}\e)^{\f{3}{2}}}{\sqrt{|D|}} \mathcal{G}(x, k)^t Y_j Y_j^t \mathcal{G}(x_0, k) \\
 && -  \mathcal{G}(x, k)^t \mathcal{G}(x_0, k) c_{\Lambda}\e.
\eeas

To estimate (II), note that by Proposition \ref{prop-inhomo1-mul}
$$
\mu^{(1)}= \sum_{1\leq j \leq M } \left(\f{O(\e^{\f{3}{2}})}{k- k_{0,\e,j, 2} }+ \f{O(\e^{\f{3}{2}})}{k- k_{0,\e,j, 1} }\right) + O(\e^{\f{3}{2}}).
$$
Combining this estimate with the fact that $\|1 \|_{V_{\e,j}^*} =O(\sqrt{\e})$,
we obtain
$$
II =  \sum_{1\leq j \leq M } \left(\f{O(\e^{2})}{k- k_{0,\e,j, 2} }+ \f{O(\e^{2})}{k- k_{0,\e,j, 1} }\right) + O(\e^{2}).
$$

Finally, we estimate (III). Again by Proposition \ref{prop-inhomo1-mul}, we have
$$
\mu= \sum_{1\leq j \leq M } \left(\f{O(\e)}{k- k_{0,\e,j, 2} }+ \f{O(\e)}{k- k_{0,\e,j, 1} }\right) + O(\e^{\f{3}{2}}).
$$
This together with
(\ref{estimation1-mul}) yields
$$
III =   \sum_{1\leq j \leq M } \left(\f{O(\e^{\f{5}{2}})}{k- k_{0,\e,j, 2} }+ \f{O(\e^{\f{5}{2}})}{k- k_{0,\e,j, 1} }\right) + O(\e^{2})
$$

The theorem follows immediately.

\section{Proof of Super-resolution by a system of sub-wavelength resonators} \label{sec-SR-proof}

\subsection{Proof of Theorem \ref{thm-super-resolution-1}} \label{sec-sub-proof-super-1}

We analyze the function $\Im{{G}_{\e}^{ex}(x, x_0, k)}$ for a fixed frequency $k\geq 0$.
Recall that for $k\in W$,
\beas
G_{\e}^{ex}(x, x_0, k) &=& G^{ex}(x, x_0, k) - {\e}c_{\Lambda}\sum_{1\leq j \leq M}G^{ex}(z^{(j)}, x_0, k)G^{ex}(x, z^{(j)}, k)\\
&& - \sum_{j=1}^M \left( \f{1}{k-k_{0,\e,j,2}} -\f{1}{k-k_{0,\e,j,1}}\right)\f{(c_{\Lambda}\e)^{\f{3}{2}}}{\sqrt{|D|}} \mathcal{G}(x, k)^t Y_j Y_j^t \mathcal{G}(x_0, k) \\
&& + \sum_{1\leq j \leq M } \left(\f{O(\e^{2})}{k- k_{0,\e,j, 2} }+ \f{O(\e^{2})}{k- k_{0,\e,j, 1} }\right) + O(\e^{2})\\
&& =: {G}_{\e,1}^{ex}+ {G}_{\e, 2}^{ex} + {G}_{\e, 3}^{ex}+ {G}_{\e,4}^{ex} + O(\e^{2}).
\eeas

We can decompose $\Im{G}_{\e}^{ex}(x, x_0, k)$ into four parts accordingly and analyze each part one by one.

\begin{lem}
For $k\in W$, the following identities hold:
\beas
\Im{G}_{\e, 1}^{ex}(x, x_0, k)&=& \Im{G}^{ex}(x, x_0, k)= \f{\sin{k|x-x_0|}}{2\pi|x-x_0|}, \\
\Im{G}_{\e, 2}^{ex}(x, x_0, k)&= &-\e c_{\Lambda} \sum_{1\leq j \leq M}\f{\sin\left({k|x_0-z^{(j)}|+ k|x -z^{(j)}|}\right)}{2\pi|x_0-z^{(j)}|\cdot|x -z^{(j)}|}=  \e O(k),\\
\Im{G}_{\e, 3}^{ex}(x, x_0, k)
&= &-\f{(c_{\Lambda}\e)^{\f{3}{2}}}{\sqrt{|D|}} \left\{ \sum_{j=1}^M \zeta_j(0) \Im{\left[\f{1}{k-k_{0,\e,j,2}} \right]} -\sum_{j=1}^M \zeta_j(0) \Im{\left[ \f{1}{k-k_{0,\e,j,1}}\right]} \right\}\\
&&  +O(\e^{\f{3}{2}}) \left( O(k-k_{0,\e,j,1}) +O(k_{0,\e,j,1}) + O(k-k_{0,\e,j,2}) +O(k_{0,\e,j,2})\right),\\
\Im{G}_{\e, 4}^{ex}(x, x_0, k) &=& \sum_{1\leq j \leq M } \Im{ \left(\f{C_{j,2}}{k- k_{0,\e,j, 2} }+ \f{C_{j,1}}{k- k_{0,\e,j, 1} }\right)} + O(\e^{2}), \quad |k|\leq O(\sqrt{\e}),
\eeas
where $C_{j,2}$ and $C_{j,1}$ are independent of $k$ and are bounded by $O(\e^{2})$ and $\zeta_j(0)= \zeta_j(x, x_0, 0)$ (see (\ref{defzeta})).
\end{lem}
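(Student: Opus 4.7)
The plan is to apply the asymptotic decomposition
$$G_\varepsilon^{ex} = G_{\varepsilon,1}^{ex} + G_{\varepsilon,2}^{ex} + G_{\varepsilon,3}^{ex} + G_{\varepsilon,4}^{ex} + O(\varepsilon^2)$$
from Theorem \ref{thm-multi} and take the imaginary part of each of the four pieces separately. The first and the fourth are routine; the real content is in the third, which carries the resonant pole.

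For $G_{\varepsilon,1}^{ex} = G^{ex}$, I would invoke the explicit image-method representation of the Neumann half-space Green function, which reduces to $e^{ik|x-x_0|}/(2\pi|x-x_0|)$ when $x_0$ lies on $\{x_3 = 0\}$; separating real and imaginary parts gives the sinc-type kernel stated. For $G_{\varepsilon,2}^{ex}$, each centre $z^{(j)}$ lies on $\partial\Omega^{ex}$, so the same boundary formula applies and makes the product $G^{ex}(z^{(j)},x_0,k)\,G^{ex}(x,z^{(j)},k)$ explicitly equal to $e^{ik(|x_0-z^{(j)}|+|x-z^{(j)}|)}/((2\pi|x_0-z^{(j)}|)(2\pi|x-z^{(j)}|))$. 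Taking imaginary parts produces the displayed sine of the summed distance, and the size estimate $\varepsilon O(k)$ is immediate from $|\sin\theta| \leq |\theta|$.

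For the main term $G_{\varepsilon,3}^{ex}$, the crucial observation is that $\zeta_j(x,x_0,0)$ is real, because at $k=0$ the exterior Green function reduces to the Laplacian kernel $\frac{1}{2\pi|\cdot|}$ and therefore every entry of $\mathcal{G}(\,\cdot\,,0)$ is real. I would write
$$\zeta_j(x,x_0,k) = \zeta_j(x,x_0,0) + \bigl[\zeta_j(k) - \zeta_j(0)\bigr],$$
so that the first summand contributes exactly the displayed main term $\zeta_j(0)\,\Im[1/(k-k_{0,\varepsilon,j,\ell})]$. For the residual piece, analyticity of $\zeta_j$ together with the algebraic identity $k = (k - k_{0,\varepsilon,j,\ell}) + k_{0,\varepsilon,j,\ell}$ gives
$$\zeta_j(k) - \zeta_j(0) = (k - k_{0,\varepsilon,j,\ell})\,h_{1}(k) + k_{0,\varepsilon,j,\ell}\,h_{2}(k)$$
for bounded analytic $h_1,h_2$; dividing by $k - k_{0,\varepsilon,j,\ell}$ and pulling out the prefactor $(c_\Lambda\varepsilon)^{3/2}/\sqrt{|D|}$ yields precisely the error structure $O(\varepsilon^{3/2})\bigl(O(k - k_{0,\varepsilon,j,\ell}) + O(k_{0,\varepsilon,j,\ell})\bigr)$ claimed in the lemma.

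For $G_{\varepsilon,4}^{ex}$, the $O(\varepsilon^2)$ coefficients sitting in front of each pole $1/(k-k_{0,\varepsilon,j,\ell})$ in Theorem \ref{thm-multi} are analytic in $k$ and emerge from the $O(\varepsilon^2)$ remainder in Proposition \ref{prop-inhomo1-mul}; I would set $C_{j,\ell}$ to be their values at $k=0$, both of size $O(\varepsilon^2)$, and absorb the $k$-dependent correction (of size $O(\varepsilon^2 k) = O(\varepsilon^{5/2})$ once we restrict to $|k|\leq O(\sqrt{\varepsilon})$) into a uniform $O(\varepsilon^2)$ remainder after division by $k-k_{0,\varepsilon,j,\ell}$. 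The main obstacle is the third step: one has to make the splitting of $\zeta_j(k)$ quantitative uniformly in $k$, simultaneously controlling the behaviour near resonance (where $1/(k-k_{0,\varepsilon,j,\ell})$ blows up and only the $k_{0,\varepsilon,j,\ell}$ part of the expansion saves the day) and well away from resonance (where the Taylor correction to $\zeta_j$ contributes on the same order as the main term and must be bookkept carefully).
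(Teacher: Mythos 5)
Your treatment of $G_{\e,1}^{ex}$, $G_{\e,2}^{ex}$ and the main part of $G_{\e,3}^{ex}$ follows the paper's route (explicit half-space kernels, reality of $\zeta_j(0)$, Taylor splitting of $\zeta_j$), but there is a concrete gap in how you produce the constants $C_{j,\ell}$ for $G_{\e,4}^{ex}$, and the error orders you claim do not follow from the splittings as you wrote them. For $G_{\e,4}^{ex}$ you take $C_{j,\ell}$ to be the value at $k=0$ of the analytic $O(\e^2)$ numerator $h(k,\e)$ and assert that the $k$-dependent correction can be absorbed into a uniform $O(\e^2)$ remainder after dividing by $k-k_{0,\e,j,\ell}$. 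This fails near the resonance: the correction is $\f{h(k,\e)-h(0,\e)}{k-k_{0,\e,j,\ell}}$, whose numerator is only $O(\e^2)|k|=O(\e^{5/2})$ for $|k|\le O(\sqrt{\e})$, while the denominator can be as small as $|\Im k_{0,\e,j,\ell}|=O(\e^2)$, so the quotient can reach size $O(\e^{1/2})$, not $O(\e^2)$. The paper avoids this by expanding $h$ at the pole itself: it uses the Cauchy integral formula on $|z|=\f{1}{2}k_1$ to show $\f{\p h}{\p k}(k,\e)=O(\e^2)$ for $|k|\le O(\sqrt{\e})$, hence $h(k,\e)=h(k_{0,\e,j,\ell},\e)+O(\e^2)(k-k_{0,\e,j,\ell})$, and defines $C_{j,\ell}=h(k_{0,\e,j,\ell},\e)$; after division the remainder is then uniformly $O(\e^2)$. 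This choice of expansion point, together with the Cauchy-type derivative bound that you never establish, is the missing ingredient; it also matters downstream, since the later estimate of $I_{4,j,1,1}$ pairs $C_{j,1}$ with $s(\Re{k_{0,\e,j,1}})$.

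A similar remark applies to your bookkeeping for $G_{\e,3}^{ex}$: from $\zeta_j(k)-\zeta_j(0)=(k-k_{0,\e,j,\ell})h_1(k)+k_{0,\e,j,\ell}\,h_2(k)$, dividing by $k-k_{0,\e,j,\ell}$ gives $h_1(k)+k_{0,\e,j,\ell}\,h_2(k)/(k-k_{0,\e,j,\ell})$, which is $O(1)+O(k_{0,\e,j,\ell})/|k-k_{0,\e,j,\ell}|$, not the claimed $O(k-k_{0,\e,j,\ell})+O(k_{0,\e,j,\ell})$; near the resonance the second piece is pointwise large. The paper's proof first freezes $\zeta_j$ at the resonance, writing $\zeta_j(k)=\zeta_j(k_{0,\e,j,\ell})+O(k-k_{0,\e,j,\ell})$ so that the pole carries a constant coefficient, and only then replaces $\zeta_j(k_{0,\e,j,\ell})$ by $\zeta_j(0)$ up to $O(k_{0,\e,j,\ell})$. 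Admittedly the paper's own handling of these remainders is coarse, and they are only exploited after integration against $s(k)$, but to reach the lemma in its stated form you should at least follow that ordering (evaluate at the resonance first); you correctly identify, without resolving, exactly this near-resonance difficulty.
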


\begin{proof} The first two are obvious. We only need to consider the last two estimates. We first consider $\Im{G}_{\e, 3}^{ex}(x, x_0, k)$.
Note that
$$
\Im{G}_{\e, 3}^{ex}(x, x_0, k)
= -\f{(c_{\Lambda}\e)^{\f{3}{2}}}{\sqrt{|D|}}\sum_{j=1}^M \Im{\left[\f{1}{k-k_{0,\e,j,2}} \zeta_j(k)\right]}
 + \f{(c_{\Lambda}\e)^{\f{3}{2}}}{\sqrt{|D|}}\sum_{j=1}^M \Im{\left[ \f{1}{k-k_{0,\e,j,1}} \zeta_j(k)\right]},
$$
where $\zeta_j(k)=\zeta_j(x, x_0, k) $.

It is clear that $\zeta_j(k)= \zeta_j(k_{0,\e,j,2})+ O(k-k_{0,\e,j,2})$. Thus,
$$
\Im{\left[\f{1}{k-k_{0,\e,j,2}} \zeta_j(k)\right]} = \Im{\left[\f{\zeta_j(k_{0,\e,j,2})}{k-k_{0,\e,j,2}}\right]}+
O(k-k_{0,\e,j,2}).
$$

Since $\zeta_j(k_{0,\e,j,2})= \zeta_j(0) + O(k_{0,\e,j,2})$ and $\zeta_j(0)$ is a real number, we further get
$$
\Im{\left[\f{1}{k-k_{0,\e,j,2}} \zeta_j(k)\right]} = \zeta_j(0) \Im{\left[\f{1}{k-k_{0,\e,j,2}}\right]}+
O(k-k_{0,\e,j,2}) +O(k_{0,\e,j,2}).
$$

Similarly, we have
$$
\Im{\left[\f{1}{k-k_{0,\e,j,1}} \zeta_j(k)\right]} = \zeta_j(0) \Im{\left[\f{1}{k-k_{0,\e,j,1}}\right]}+
O(k-k_{0,\e,j,1}) +O(k_{0,\e,j,1}).
$$

This proves the estimate for $\Im{G}_{\e, 3}^{ex}(x, x_0, k)$.

Finally, we estimate the term
$$
\Im{G}_{\e, 4}^{ex}(x, x_0, k) = \sum_{1\leq j \leq M } \Im{ \left(\f{O(\e^{2})}{k- k_{0,\e,j, 2} }+ \f{O(\e^{2})}{k- k_{0,\e,j, 1} }\right)} + O(\e^{2}).
$$

Note that the terms of $O(\e^{2})$ in the above formula are in fact analytic functions for $k\in W$.
To analyze it, we rewrite one of the $O(\e^{2})$ terms as $h(k, \e)$. Then $h(k, \e)$ is of the order of $O(\e^{2})$. By using the formula
$$
\f{\p h}{\p k}(k, \e) = \f{1}{2\pi i}\int_{|z|=\f{1}{2}k_1}\f{h(z, \e)}{(z-k)^2}dz,
$$
we can derive that $\f{\p h}{\p k}(k, \e) =O(\e^{2})$ for $k \leq O(\sqrt{\e})$.
This further yields that
$$
h(k, \e) = h(k_{0, \e, j, 2}, \e) + O(\e^{2})(k - k_{0, \e, j, 2}), \quad k \leq O(\sqrt{\e}).
$$
It follows that
$$
\Im{\f{h(k, \e)}{k- k_{0,\e,j, 2}}} = \Im{\f{h(k_{0, \e, j, 2}, \e)}{k- k_{0,\e,j, 2}}} +  O(\e^{2}) =
\Im{\f{ C_{j,2}}{k- k_{0,\e,j, 2}}} +  O(\e^{2}),
$$
where $C_{j,2}$ is independent of $k$ and is bounded by  $O(\e^{2})$.
We can derive a similar result for the term $\Im{\f{O(\e^2)}{k- k_{0,\e,j, 2}}}$. This proves the estimate for $\Im{{G}_{\e,4}^{ex}(x, y_0, k)}$ and hence completes the proof of the Lemma.
\end{proof}

\medskip

Finally, by taking $k= \tau_1 \sqrt{\e} = \sqrt{\f{c_{\Lambda}\e}{|D|}}$ in the above lemma and using Proposition \ref{prop-resonances}, we obtain Theorem \ref{thm-super-resolution-1} on the imaging functional at a fixed frequency.

\subsection{Proof of Theorem \ref{thm-super-resolution-2}} \label{sec-sub-proof-super-2}
Recall that the imaging functional is given as follows:
$$
I = \int_0^{\infty} \Im{{G}_{\e}^{ex}(x, y_0, k)} \Im{\big(\check{f}(k)e^{ikt}\big)} d k,
$$
where $f(t) = \e^{\f{1}{4}} F(\e^{\f{1}{2}}t)$.

By a direct calculation, it is clear that
\be
\check{f}(k) = \e^{-\f{1}{4}} \check{F}(\e^{-\f{1}{2}}k).
\ee

We denote by $s(k)=s(k, t)= \Im{\big(\check{f}(k)e^{ikt}\big)} = \e^{-\f{1}{4}} \Im{\big( \check{F}(\e^{-\f{1}{2}}k)e^{ikt}\big)}$.
We first derive some basic properties about the input $s$ for the imaging functional.

\begin{lem}
The follow estimate holds:
$$
 \int_{0}^{\infty} |s(k)| dk \leq  O (\e^{\f{1}{4}}).
$$
\end{lem}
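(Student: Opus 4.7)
The plan is to reduce the bound to showing that $\check{F}$ itself lies in $L^1(\R)$ with norm of order one, which will follow directly from the $H^2$ regularity assumption (\ref{cond-H2}) on the root signal $F$.

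First I would use the trivial pointwise bound $|s(k)| \leq \epsilon^{-1/4}|\check F(\epsilon^{-1/2}k)|$, which follows from $|\Im z| \leq |z|$ and $|e^{ikt}|=1$. Then I would rescale the integration variable by setting $u = \epsilon^{-1/2}k$, so that $dk = \epsilon^{1/2}\,du$, which gives
\begin{equation*}
\int_0^\infty |s(k)|\,dk \;\leq\; \epsilon^{-1/4}\int_0^\infty |\check F(\epsilon^{-1/2}k)|\,dk \;=\; \epsilon^{1/4}\int_0^\infty |\check F(u)|\,du.
\end{equation*}

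The remaining task is therefore to verify $\|\check F\|_{L^1(\R)} = O(1)$. This is where the quasi-stationarity hypothesis (\ref{cond-H2}) enters: applying the Cauchy--Schwarz inequality with the weight $(1+u^2)$ gives
\begin{equation*}
\int_{\R} |\check F(u)|\,du \;\leq\; \Bigl(\int_{\R}(1+u^2)^{-2}\,du\Bigr)^{1/2}\Bigl(\int_{\R}(1+u^2)^2|\check F(u)|^2\,du\Bigr)^{1/2} \;\lesssim\; \|F\|_{H^2(\R)} \;=\; O(1),
\end{equation*}
where the second factor is the standard $H^2$-norm expressed on the Fourier side (up to a harmless constant coming from the normalization of the Fourier transform used in the paper), and the first factor is a finite numerical constant.

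There is no real obstacle here: the only minor care is to be consistent with the $2\pi$-factors in the Fourier transform convention adopted earlier and to note that the integral over $[0,\infty)$ is bounded by the integral over all of $\R$. Combining the two displays above yields the claimed estimate $\int_0^\infty |s(k)|\,dk \leq O(\epsilon^{1/4})$.
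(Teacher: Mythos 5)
Your proposal is correct and follows essentially the same route as the paper: bound $|s(k)|$ by $\e^{-\f{1}{4}}|\check{F}(\e^{-\f{1}{2}}k)|$ and rescale to get $\e^{\f{1}{4}}\int_0^{\infty}|\check{F}(u)|\,du$. The only difference is that you explicitly justify $\|\check{F}\|_{L^1}=O(1)$ via Cauchy--Schwarz and the $H^2$ condition (\ref{cond-H2}), a step the paper leaves implicit (and uses in the same spirit later when bounding $\int_0^{\infty}k|\check{F}(k)|\,dk$), so this is a welcome but minor addition rather than a different argument.
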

\begin{proof}
$$
 \int_0^{\f{1}{2}k_1} |s(k)| d k
\leq \int_{0}^{\f{1}{2}k_1}\e^{-\f{1}{4}} | \check{F}(\e^{-\f{1}{2}}k)| dk
\leq \e^{\f{1}{4}} \int_{0}^{\infty} |\check{F}(k)| dk.
$$
\end{proof}

\begin{lem} \label{lem-esti}
For $k , a \in [0,  \f{1}{2}k_1]$, the following estimate holds:
\be \label{esti}
|s(k) -s(a)| \leq O(\e^{-\f{3}{4}}) |k-a|.
\ee
\end{lem}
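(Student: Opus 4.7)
The plan is to establish the Lipschitz bound by controlling $s'(k)$ directly on the interval $[0,\tfrac{1}{2}k_1]$ and applying the mean-value theorem. Differentiating
\[
s(k) = \e^{-\f{1}{4}} \Im\!\bigl(\check{F}(\e^{-\f{1}{2}}k)\,e^{ikt}\bigr)
\]
with respect to $k$ produces, via the chain rule, two contributions:
\[
\f{ds}{dk} = \e^{-\f{1}{4}} \Im\!\Bigl( \e^{-\f{1}{2}} \check{F}'(\e^{-\f{1}{2}}k)\,e^{ikt}\Bigr) + \e^{-\f{1}{4}} \Im\!\bigl( it\,\check{F}(\e^{-\f{1}{2}}k)\,e^{ikt}\bigr).
\]
The first term carries the anticipated factor $\e^{-\f{1}{4}}\cdot\e^{-\f{1}{2}} = \e^{-\f{3}{4}}$, which is precisely the size claimed in (\ref{esti}). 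The second term must be shown to be no worse, under the implicit understanding that $t$ stays within the relevant observation window $t \lesssim \e^{-\f{1}{2}}$ of the time-reversal experiment (where the imaging functional $I$ is evaluated, cf.\ Theorem \ref{thm-super-resolution-2}).

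The core step is therefore to prove $\|\check{F}\|_{L^\infty} = O(1)$ and $\|\check{F}'\|_{L^\infty} = O(1)$. Both estimates follow from the two standing hypotheses (\ref{cond-supp}) and (\ref{cond-H2}) on the root signal. By the Sobolev embedding $H^2(\R)\hookrightarrow C^0_b(\R)$ one has $\|F\|_{L^\infty} \lesssim \|F\|_{H^2} = O(1)$, and since $F$ is supported in $[0,C_1]$ this also yields $\|F\|_{L^1}\lesssim C_1\|F\|_{L^\infty}= O(1)$ and $\|(\cdot)F\|_{L^1}\lesssim C_1^{2}\|F\|_{L^\infty}= O(1)$. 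Hence
\[
\|\check{F}\|_{L^\infty} \leq \|F\|_{L^1} = O(1),\qquad \|\check{F}'\|_{L^\infty}\leq \|(\cdot)F\|_{L^1} = O(1).
\]

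Inserting these bounds into the formula for $s'$ gives
\[
\Bigl|\f{ds}{dk}(k)\Bigr| \leq \e^{-\f{3}{4}}\|\check{F}'\|_{L^\infty} + \e^{-\f{1}{4}}|t|\,\|\check{F}\|_{L^\infty} \lesssim \e^{-\f{3}{4}}
\]
uniformly in $k\in[0,\tfrac{1}{2}k_1]$. The desired estimate (\ref{esti}) then follows at once from the mean-value theorem applied to $s$ on the segment joining $a$ and $k$.

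The only real subtlety is the bookkeeping on the time variable $t$: one must verify that in the range in which the lemma is used (namely, inside the imaging functional $I$, where $t$ appears multiplied by $\sqrt{\e}$ in phases such as $e^{i\tau_1\sqrt{\e}t}$), we indeed have $|t|\lesssim \e^{-\f{1}{2}}$, so that the second term in the derivative is also controlled by $\e^{-\f{3}{4}}$. Once this is noted, no further obstacle remains; all the other ingredients (the chain rule and the Sobolev/compact-support estimates on $\check{F}$) are routine consequences of the quasi-stationarity assumptions on the signal $f$.
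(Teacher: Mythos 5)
Your proof is correct and follows essentially the same route as the paper: bound $\partial_k s$ and apply the mean-value theorem, the only cosmetic difference being that the paper controls $|\check{F}'(k)|=\bigl|\int_0^{C_1}(-t)e^{-ikt}F(t)\,dt\bigr|$ directly by Cauchy--Schwarz with $\|F\|_{L^2}\lesssim\|F\|_{H^2}=O(1)$, whereas you go through the Sobolev embedding and $L^1$ bounds. Your explicit treatment of the second chain-rule term $\e^{-\f{1}{4}}\,\Im\bigl(it\,\check{F}(\e^{-\f{1}{2}}k)e^{ikt}\bigr)$, which is of the claimed order only because the relevant times satisfy $|t|\lesssim \e^{-\f{1}{2}}$, is in fact more careful than the paper, which passes from $\check f'=O(\e^{-\f{3}{4}})$ to $s'=O(\e^{-\f{3}{4}})$ without comment.
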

\begin{proof}
Since $\check{f}(k) = \e^{-\f{1}{4}} \check{F}(\e^{-\f{1}{2}}k)$,  we have $\check{f}'(k) = \e^{-\f{3}{4}} \check{F}'(\e^{-\f{1}{2}}k)$.
On the other hand,
$$
|\check{F}'(k)|  = |\int_{0}^{C_1} -t e^{-ikt} F(t) dt |  \leq \left(\int_{0}^{C_1} t^2 dt\right)^{\f{1}{2}} \cdot \|F\|_{L^2} \leq O(1).
$$

Thus, $\check{f}'(k)= O(\e^{-\f{3}{4}})$. It follows that $s'(t) = O(\e^{-\f{3}{4}})$ and hence the estimate (\ref{esti}) holds. The lemma is proved.\end{proof}

\medskip

To facilitate the analysis of the imaging functional $I$, we split it into five parts as follows:
\beas
I &=& \int_0^{\infty} \Im{{G}_{\e}^{ex}(x, y_0, k)} s(k) d k =
I_1 + I_2 + I_3 +I_4+ I_5,
\eeas
where
\beas
I_j &=& \int_0^{\f{1}{2}k_1} \Im{{G}_{\e, j}^{ex}(x, y_0, k)} s(k) d k, \quad 1\leq j \leq 4, \\
I_5 &=& \int_{\f{1}{2}k_1}^{\infty} \Im{{G}_{\e}^{ex}(x, y_0, k)} s(k) d k.
\eeas

We shall investigate each of the five terms above in the sequel.

We first consider
\beas
I_1= \int_0^{\infty}  \f{\sin{k|x-x_0|}}{2\pi|x-x_0|} s(k) dk.
\eeas
It is clear that $I_1$ is the imaging functional in the free space, and it yields the standard resolution. One cannot obtain super-resolution out of this term. We now estimate its magnitude.
The following is obvious:
$$
|I_1| \leq  \int_0^{\f{1}{2}k_1} |s(k)| dk =  O( \e^{\f{1}{4}}).
$$
However, if we impose additional smoothness conditions on the root signal $F$, we may obtain better estimates. Indeed, under Condition (\ref{cond-H2}), we have
$$
\int_{0}^{\infty} k | \check{F}(k) | dk  \leq \|\f{k}{k^2+1}\|_{L^2(0, \infty)} \cdot \|(k^2+1)\check{F}(k)\|_{L^2(0, \infty)}= O(1).
$$
It follows that
\beas
|I_1| \leq  \int_0^{\f{1}{2}k_1} k |s(k)| dk
\leq  \e^{\f{3}{4}} \int_0^{\infty} k |\check{F}( k )| d k
\leq   O( \e^{\f{3}{4}}).
\eeas

Under Condition (\ref{cond-quasi1}), it is clear that
$$
I_1= \int_0^{\e^{\f{1}{2}- \delta}} \f{\sin{k|x-x_0|}}{2\pi|x-x_0|} s(k) dk + o(\e^{\f{5}{4}}).
$$


The next objective of our study is to estimate
$$
I_2 = \e c_{\Lambda} \sum_{1\leq j \leq M}\int_0^{\infty} \left(\f{\sin\left({k|x_0-z^{(j)}|+ k|x -z^{(j)}|}\right)}{2\pi|x_0-z^{(j)}|\cdot|x -z^{(j)}|}\right)s(k)dk = M \cdot \int_0^{\infty} \e O(k) s(k) dk.
$$
By a similar argument as the one for $I_1$, we have the following estimate
\be
I_2 = M \cdot O(\e^{\f{7}{4}}).
\ee

%

We now consider
$$
I_3= \int_0^{\f{1}{2}k_1} \Im{G}_{\e, 3}^{ex}(x, x_0, k) s(k)dk,
$$
which can be further decomposed into the following four terms:
\beas
I_3 &=& -\int_0^{\f{1}{2}k_1} \f{(c_{\Lambda}\e)^{\f{3}{2}}}{\sqrt{|D|}} \sum_{j=1}^M \Im{\left[ \f{\zeta_j(0)}{k-k_{0,\e,j,2}}\right]}s(k)dk \\
&& + \int_0^{\f{1}{2}k_1} \f{(c_{\Lambda}\e)^{\f{3}{2}}}{\sqrt{|D|}} \sum_{j=1}^M \Im{\left[ \f{\zeta_j(0)}{k-k_{0,\e,j,1}}\right]}s(\Re{k_{0,\e,j,1}})dk  \\
&& + \int_{0}^{\f{1}{2}k_1}  \f{(c_{\Lambda}\e)^{\f{3}{2}}}{\sqrt{|D|}} \sum_{j=1}^M \Im{\left[ \f{\zeta_j(0)}{k-k_{0,\e,j,1}}\right]}\big( s(k) -s(\Re{k_{0,\e,j,1}}))  dk \\
&& + \int_0^{\f{1}{2}k_1} \big( O(k)+ O(\sqrt{\e})\big) O(\e^{\f{3}{2}}) s(k) dk \\
&=:& \sum_{j=1}^M I_{3, j, 2} + \sum_{j=1}^M I_{3, j, 1,1} + \sum_{j=1}^M I_{3, j, 1, 2} + O(\e^{\f{9}{4}}).
\eeas

It is clear that for $k > 0$, $\Im{ \left(\f{1}{k- k_{0,\e,j, 2} }\right)} =  O(\e^{-1}) \cdot O( |\Im{k_{0,\e,j, 2}}|)= O(\e)$.
Thus,
$$
|I_{3, j, 2}| \leq  O(\e^{\f{3}{2}})  \int_0^{\f{1}{2}k_1} O(\e) |s(k)| d k =
O(\e^{\f{5}{2}}) \cdot O(\e^{\f{1}{4}}) =
O(\e^{\f{11}{4}}).
$$

%



On the other hand, with the help of Lemma \ref{lem-integral},
we can deduce that
\beas
 I_{3, j, 1, 1}& =&  \f{(c_{\Lambda}\e)^{\f{3}{2}}\pi}{\sqrt{|D|}} \zeta_j(0) s(\Re{k_{0,\e,j,1}}) + O(\e^3)\\
 & =&  \f{(c_{\Lambda}\e)^{\f{3}{2}}\pi}{\sqrt{|D|}} \zeta_j(0) s(\tau_1 \sqrt{\e}) +O (\e^{\f{9}{4}}); \\
| I_{3, j, 1, 2}| & \leq &  \int_0^{\f{1}{2}k_1} \f{(c_{\Lambda}\e)^{\f{3}{2}}}{\sqrt{|D|}} | \Im{\left[ \f{\zeta_j(0)}{k-k_{0,\e,j,1}}\right]}|\cdot |k- \Re{k_{0,\e,j,1}}| O( \e^{-\f{3}{4}}) dk \\
& \leq &  O( \e^{\f{3}{4}}) \int_0^{\f{1}{2}k_1} | \Im{\left[ \f{1}{k-k_{0,\e,j,1}}\right]}|\cdot |k- \Re{k_{0,\e,j,1}}| dk \\
& \leq &  O( \e^{\f{3}{4}}) |\Im{k_{0,\e,j,1}}| \cdot |\ln{\e}| \\
& \leq &  O( \e^{\f{5}{2}}).
\eeas

Therefore, we get
\beas
I_3 &=& \f{(c_{\Lambda})^{\f{3}{2}}\pi}{\sqrt{|D|}}  \e^{\f{3}{2}} s(\tau_1 \sqrt{\e})\sum_{j=1}^M \zeta_j(0) + O(\e^{\f{9}{4}})\\
&=& \f{(c_{\Lambda})^{\f{3}{2}}\pi}{\sqrt{|D|}}  \e^{\f{3}{2}} s(\tau_1 \sqrt{\e})\sum_{j=1}^M\mathcal{G}(x, 0)^t Y_jY_j^t \mathcal{G}(x_0, 0) + O(\e^{\f{9}{4}})\\
&=& \f{(c_{\Lambda})^{\f{3}{2}}}{\sqrt{|D|}}  \e^{\f{3}{2}} s(\tau_1 \sqrt{\e})\mathcal{G}(x, 0)^t \mathcal{G}(x_0, 0) + O(\e^{\f{9}{4}})\\
&=& \f{(c_{\Lambda})^{\f{3}{2}}\pi}{\sqrt{|D|}} \e^{\f{3}{2}} s(\tau_1 \sqrt{\e})\sum_{j=1}^M G^{ex}(x, z^{(j)}, 0)G^{ex}(x_0, z^{(j)}, 0)+ O(\e^{\f{9}{4}})\\
&=& \f{(c_{\Lambda})^{\f{3}{2}}}{\sqrt{|D|}}  \e^{\f{5}{4}}  \Im{\big( \check{F}(\tau_1)e^{-i\tau_1 \sqrt{\e}t}\big)} \sum_{j=1}^M \f{1}{4\pi |x- z^{(j)}|\cdot |x_0- z^{(j)}| }+ O(\e^{\f{9}{4}}).
\eeas

We claim that the term $I_3$ is the main contribution to the super-resolution. Although it is of the order of $\e^{\f{5}{4}}$, its magnitude may be comparable to that of the term $I_1$ when $x$ and $x_0$ are close to one of the openings of the resonators.


We now consider
$$
I_4 = \int_0^{\f{1}{2}k_1} \Im{G}_{\e, 4}^{ex}(x, x_0, k) s(k)dk.
$$
We can decompose it into four parts:
\beas
I_4 &=&  \sum_{j=1}^M \int_0^{\f{1}{2}k_1} \Im{ \left(\f{O(\e^2)}{k- k_{0,\e,j, 2} }\right)} s(k)dk \\
 &&+   \sum_{j=1}^M \int_0^{2\tau_1 \sqrt{\e}} \Im{ \left(\f{C_{j,1}}{k- k_{0,\e,j, 1} } \right)} s(k)dk+
 \sum_{j=1}^M \int_{2\tau_1 \sqrt{\e}}^{\f{1}{2}k_1} \Im{ \left(\f{O(\e^2)}{k- k_{0,\e,j, 1} } \right)} s(k)dk \\
 && + \int_{0}^{\f{1}{2}k_1} O(\e^{2}) s(k)dk\\
 &=:&  \sum_{j=1}^M I_{4, j, 2}
 + \sum_{j=1}^M I_{4,j,1, 1} +  \sum_{j=1}^M I_{4,j, 1,2} +    O(\e^{2}).
\eeas
Since for $k >0$,  $\Im{ \left(\f{O(\e^2)}{k- k_{0,\e,j, 2} }\right)} = O(\e^{2}) O(\e^{-\f{1}{2}}) = O(\e^{\f{3}{2}})$,
thus,
$$
|I_{4, j, 2}| \leq O(\e^{\f{3}{2}})  \int_0^{\f{1}{2}k_1} |s(k)| d k =
O(\e^{\f{3}{2}}) \int_{0}^{\infty}\e^{-\f{1}{4}} | \check{F}(\e^{-\f{1}{2}}k)| dk
\leq O(\e^{\f{7}{4}}).
$$
Similarly, we can derive that
$$
|I_{4, j, 1, 2}| \leq O(\e^{\f{7}{4}}).
$$
On the other hand, let $a= \Re{k_{0,\e,j, 1}} = O(\sqrt{\e}), b= \Im{k_{0,\e,j, 1}} = O(\e^2)$, we have
\beas
I_{4, j,1, 1} &=&  \int_0^{2\tau_1 \sqrt{\e}} \Im{ \left(\f{C_{j,1}}{k- k_{0,\e,j, 1} } \right)} s(a)dk +
 \int_0^{2\tau_1 \sqrt{\e}} \Im{ \left(\f{C_{j,1}}{k- k_{0,\e,j, 1} } \right)} \big( s(k)- s(a)\big)dk \\
&& = I_{4, j, 1,1, 1} + I_{4, j, 1, 1, 2}.
\eeas
By Lemmas \ref{lem-integral} and \ref{lem-esti}, we can deduce that
$$
|I_{4, j, 1, 1, 1}| \lesssim |C_{j, 1} |\cdot |s(a)| = O(\e^{\f{7}{4}}), \quad
|I_{4, j, 1,1,  2}| \leq  |C_{j, 1} | O(\e^{-\f{1}{4}})  = O(\e^{\f{7}{4}}).
$$

Therefore,
$$
 I_4   = O(\e^{\f{7}{4}}),
$$
and we can conclude that $I_4$ is dominated by $I_3$, though it may also yield a resolution of $O(1)$-scale.

Finally, we consider
$$
I_5 = \int_{\f{1}{2}k_1}^{\infty} \Im{G}_{\e}^{ex}(x, x_0, k) s(k)dk
= \e^{\f{1}{4}}\int_{\f{k_1}{2\sqrt{\e}}}^{\infty}  \Im{G}_{\e}^{ex}(x, x_0, \sqrt{\e}k)\Im{\big( \check{F}(\tau_1)e^{-i\tau_1 \sqrt{\e}t}\big)} dk.
$$

It is clear that $I_5$ can yield resolution on the scale of order $O(1)$. In order to claim super-resolution,
the condition (\ref{cond-quasi2}) must be assumed (by comparing the magnitude of the term $I_3$). This completes the proof of Theorem \ref{thm-super-resolution-2}.

\medskip

Finally, we have two remarks about the condition (\ref{cond-quasi1}) and (\ref{cond-quasi2}).

\begin{rmk}
The super-resolution property requires that sub-wavelength details can be resolved. In our case, the signal $f$ will be mainly concentrated in a $O(\sqrt{\e})$ neighborhood of the origin in the frequency domain, while the resolution achieved is on a $O(1)$ scale.  Thus we need to impose decay conditions on the function $s(k)$ for $k\in [O(\sqrt{\e}), \f{1}{2}k_1]$, which will be equivalent to the smoothness conditions on the root-signal $F$.
For this purpose, let $\delta \in (0, \f{1}{2})$ be a fixed number. Note that
$$
I_1= \int_0^{\e^{\f{1}{2}- \delta}} \f{\sin{k|x-x_0|}}{2\pi|x-x_0|} s(k) dk +
\int_{\e^{\f{1}{2}- \delta}}^{\f{1}{2}k_1}  \f{\sin{k|x-x_0|}}{2\pi|x-x_0|} s(k) dk=:
I_{1,1}+ I_{1, 2}.
$$

The term $I_{1, 2}$ yields resolution on the scale of $O(1)$. Thus we need to suppress it in order to claim super-resolution. Observe that
\beas
I_{1, 2} \lesssim  \int_{\e^{\f{1}{2}- \delta}}^{\f{1}{2}k_1} |s(k)| dk  \lesssim  \e^{\f{1}{4}} \int_{\e^{- \delta}}^{\infty} |\check{F}(k)| dk
 \lesssim  \e^{\f{1}{4} + n \delta}  \int_{\e^{- \delta}}^{\infty} k^n |\check{F}(k)| dk.
\eeas
By imposing smoothness conditions on the root signal $F$, we can make the term $I_{1, 2}$ arbitrarily small and hence suppress the resolution due to frequencies outside of the quasi-stationary regime, say $k\in [O(\sqrt{\e}), \infty]$. Especially, this is the case when we assume that
$$
F \in \mathcal{C}^{\infty}_0([0, C_1]).
$$
\end{rmk}

\begin{rmk}
A complete analysis of $I_5$ may involve a detailed analysis of the Green function ${G}_{\e}^{ex}(x, x_0, k)$ for $k \geq \f{1}{2}k_1$, which we believe can be achieved by the method developed in Sections \ref{sec-resonator-single} and \ref{sec-resonator-multi}. An heuristic procedure can be carried out as follows. We first partition the Fourier domain $[\f{1}{2}k_1, \infty]$ into pieces of intervals based on the distribution of the eigenvalues for the Neumann problem in the domain $D$, and analyze ${G}_{\e}^{ex}(x, x_0, k)$ for $k$ in each interval can derive estimate as in Theorem \ref{thm-multi}. We then analyze the integration of the integrand of $I_5$ in each interval by
using the same approach as we did for the imaging functionals $I_1$, $I_2$, $I_3$, and $I_4$. Finally, we sum up the contributions to $I_5$ from each intervals to obtain a global estimate.
\end{rmk}

\section{Concluding remarks}

In this paper, we have for  the first time established a  mathematical theory of super-resolution in the context of a deterministic medium. We have highlighted  the  mechanism of super-resolution and super-focusing for waves  using sub-wavelength-scaled resonant deterministic media. When a system of sub-wavelength resonators is excited by  a point source
at resonant frequencies, the information on the point source is encoded into the various resonant modes of the system of resonators. Resonant modes can propagate into the far-field, which reveals the information on the source. As a result, we can obtain super-resolution which is only limited by the distance between resonators and the signal-to-noise ratio in the data.
The system of resonators acts in some sense as an array of sub-wavelength sensors.

Our approach opens many new avenues for mathematical imaging and focusing in resonant media.
Many challenging problems are still to be solved. From noisy data, it is very challenging to precisely quantify the super-resolution enhancement factor in terms of the signal-to-noise in the data.  It would be also very interesting to use resonant media for shaping and compressing waves. Moreover, building resonant media using negative parameter materials is an actual problem of great interest \cite{fink13}. Finally, it is expected that our approach can be generalized to justify the fact that super-resolution can be achieved using structured light illuminations \cite{structured, structured2}.

\appendix
\section{Some integration formulas and estimate} \label{appendixA}

\begin{lem} \label{lem-integral}
Let $A_1, A_2, a, b$ be real numbers. Assume that $A_1 \leq a \leq A_2$ and $b\neq 0$, then
\bea
\int_{A_1} ^{A_2} \f{1}{ k- a -bi} dk &=& \f{1}{2}\ln{\f{(A_2 -a)^2+ b^2}{(A_1 -a)^2+ b^2}} + i \int_{\f{A_1-a}{b}}^{\f{A_2-a}{b}} \f{1}{t^2 +1} dt, \\
\int_{A_1} ^{A_2} |\f{1}{ k- a -bi}|\cdot |k-a| dk &=& 2(A_2 -A_1 - 2b), \\
\int_{A_1} ^{A_2} | \Im{ ( \f{1}{ k- a -bi})}|\cdot |k-a| dk &=& |b|\big(\ln{|A_2 -a|} +\ln{|A_1 -a|} - 2 \ln{|b|}\big),\\
\int_{A_1} ^{A_2} | \Im{ ( \f{1}{ k- a -bi})}| dk &=& \ln{|A_2 -a|} +\ln{|A_1 -a|} - 2 \ln{|b|}.
\eea
\end{lem}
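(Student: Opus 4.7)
\textbf{Proof proposal for Lemma \ref{lem-integral}.} All four identities are elementary antiderivative computations that follow from the decomposition
$$\frac{1}{k-a-bi} = \frac{k-a}{(k-a)^2+b^2} + i\,\frac{b}{(k-a)^2+b^2},$$
together with the substitution $u = k-a$ and the hypothesis $A_1 \le a \le A_2$, which lets us split each integral over $[A_1-a,0]\cup[0,A_2-a]$ when absolute values of $u$ appear. My plan is to treat each identity in turn.

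For the first identity I would integrate the real and imaginary parts of the above decomposition separately: the real part yields $\frac{1}{2}\log((k-a)^2+b^2)\big|_{A_1}^{A_2}$ directly, and for the imaginary part I would perform the change of variable $t=(k-a)/b$ (so $dk = b\,dt$), which turns $\int b/((k-a)^2+b^2)\,dk$ into $\int 1/(t^2+1)\,dt$ on the stated limits. This gives the claimed expression verbatim.

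For the second and third identities, the substitution $u=k-a$ and splitting at $u=0$ reduces everything to one-sided integrals of $u/\sqrt{u^2+b^2}$ and $u/(u^2+b^2)$ respectively. The antiderivatives are $\sqrt{u^2+b^2}$ and $\tfrac{1}{2}\log(u^2+b^2)$, so after evaluation the second integral equals $\sqrt{(A_2-a)^2+b^2}+\sqrt{(A_1-a)^2+b^2}-2|b|$, and the third equals $\tfrac{|b|}{2}\log\frac{((A_2-a)^2+b^2)((A_1-a)^2+b^2)}{b^4}$. These match the stated right-hand sides only in the regime $|b|\ll \min(|A_2-a|,|A_1-a|)$, in which $\sqrt{(A_i-a)^2+b^2}=|A_i-a|+O(b^2/|A_i-a|)$ and $\log((A_i-a)^2+b^2)=2\log|A_i-a|+O(b^2/(A_i-a)^2)$. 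In the application (Section \ref{sec-sub-proof-super-2}), $b = \Im k_{0,\varepsilon,j,1} = O(\varepsilon^2)$ while $|A_i - a|$ is of order $\sqrt\varepsilon$, so this regime holds and the stated identities are to be read as leading-order asymptotic equalities. I would state the exact evaluations first and then record the stated form as the resulting asymptotic.

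For the fourth identity, $|\Im(1/(k-a-bi))|=|b|/((k-a)^2+b^2)$ has antiderivative $\operatorname{sgn}(b)\arctan((k-a)/b)$, which is bounded by $\pi$; the stated formula must therefore be interpreted either as an upper bound in the same small-$b$ regime, or as a typographical transcription of the third identity without the extra factor $|k-a|$. I would derive $\arctan\frac{A_2-a}{b}-\arctan\frac{A_1-a}{b}$ as the exact value and then note the asymptotic bound that is actually used downstream. The only step that requires care is checking that the logarithmic approximations have error controlled uniformly, which is the expected obstacle, but since $b=O(\varepsilon^2)$ and the endpoints stay bounded away from $a$ on the scale $\sqrt\varepsilon$ this is immediate.
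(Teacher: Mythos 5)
The paper states Lemma \ref{lem-integral} in Appendix \ref{appendixA} without any proof, so there is nothing to compare your argument against; judged on its own, your route --- splitting $\frac{1}{k-a-bi}=\frac{k-a}{(k-a)^2+b^2}+i\,\frac{b}{(k-a)^2+b^2}$, substituting $u=k-a$, and splitting at $u=0$ using $A_1\le a\le A_2$ --- is the natural one, and your antiderivative computations are correct. Your exact evaluations also correctly expose that only the first formula is a literal identity: the third is the asymptotic form of the exact value $\frac{|b|}{2}\ln\frac{\left((A_2-a)^2+b^2\right)\left((A_1-a)^2+b^2\right)}{b^4}$ when $|b|\ll\min(|A_1-a|,|A_2-a|)$, and the fourth is exactly $\arctan\frac{A_2-a}{|b|}+\arctan\frac{a-A_1}{|b|}\le\pi$, so its printed right-hand side can only be an upper bound in the small-$|b|$ regime, as you note. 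One correction is needed, though: for the second formula, your claim that the exact value $\sqrt{(A_2-a)^2+b^2}+\sqrt{(A_1-a)^2+b^2}-2|b|$ ``matches the stated right-hand side'' in that regime is not accurate. The asymptotic value is $A_2-A_1-2|b|$, whereas the printed value $2(A_2-A_1-2b)$ is roughly twice that (and has $b$ where $|b|$ should appear); since the integrand is at most $1$, the integral can never exceed $A_2-A_1$, so the printed formula cannot be an equality even asymptotically and should be read as an upper bound of the correct order (most plausibly a typo). This does not affect the downstream use in Section \ref{sec-sub-proof-super-2}, where only orders of magnitude in $\epsilon$ enter, but your write-up should state the exact values and then record the printed expressions as bounds rather than asserting asymptotic equality for the second one.
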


\section{A criterion of recording time for the time-reversal experiment} \label{appendixB}
We derive a heuristic criterion for the recording time for the time-reversal experiment in section \ref{sec-sub-TRE}. A rigorous estimate requires full knowledge of the local energy-decaying rate for wave fields which in turn is connected to the distribution of the imaginary part of all resonances for the system of resonators, and is out of the scope of the paper. We refer to \cite{BF02, B98} for some discussions.

We first estimate of a local energy-decaying rate for the wave field $u(x,t)$.
Recall that
$$
\check{u}(x, k) = \check{G}_{\e}(x, y_0, k)\check{f}(k).
$$
We assume that the signal $f(\cdot)$ is supported in $(0, C_1 \e^{-\f{1}{2}})$ for some constant $C_1$ and $\check{f}$ is essentially supported in the quasi-stationary regime. By theorem \ref{thm-multi}, we may use the following approximation
$$
\check{u}(x, k)\approx \check{u}_1(x, k)+\check{u}_2(x, k)+ \check{u}_3(x, k)+ \check{u}_4(x, k),
$$
where
\beas
& \ds \check{u}_1(x, k) =
\check{G}^{ex}(x, x_0, k) \check{f}(k),\\
\nm
& \ds \check{u}_2(x, k) =  {\e}c_{\Lambda}\sum_{1\leq j \leq M}\check{G}^{ex}(z^{(j)}, x_0, k)\check{G}^{ex}(x, z^{(j)}, k)\check{f}(k) \\
\nm
& \ds \check{u}_3(x, k) =  \sum_{j=1}^M \left( \f{1}{k-k_{0,\e,j,2}} -\f{1}{k-k_{0,\e,j,1}}\right)\f{(c_{\Lambda}\e)^{\f{3}{2}}}{\sqrt{|D|}} \zeta_j(k) \check{f}(k) \\
\nm & \ds \check{u}_4(x, k) =  \sum_{1\leq j \leq M } \left(\f{O(\e^{2}) \check{f}(k)}{k- k_{0,\e,j, 2} }+ \f{O(\e^{2}) \check{f}(k)}{k- k_{0,\e,j, 1} }\right).
\eeas

We denote by
$$
u_j(x, t) =  \int e^{-ikt} \check{u}_j(x, k) dk, \quad 1\leq j \leq 4,
$$
and analyze each of the four terms.
First, we have
\beas
u_1(x, t) = \int e^{-ikt} \f{e^{ik |x-x_0|}}{2\pi |x-x_0|} \check{f}(k)dk = \f{f(|x-x_0|-t)}{|x-x_0|}.
\eeas

Second,
\beas
u_2(x, t)
& =& \sum_{1\leq j \leq M } \int e^{-ikt} \f{e^{ik |x-z^{(j)}|+ ik |x_0-z^{(j)}|}}{4\pi^2 |x-z^{(j)}|\cdot |x_0-z^{(j)}| }\check{f}(k)dk \\
& = &\sum_{1\leq j \leq M } \f{f(|x-z^{(j)}|+|x_0-z^{(j)}|-t)}{4\pi^2 |x-z^{(j)}|\cdot |x_0-z^{(j)}|}.
\eeas

It is clear that for $t$ sufficiently large, say $$t \geq C_1 \e^{-\f{1}{2}} + \max_{1\leq j \leq M } \bigg\{ |x-z^{(j)}|+|x_0-z^{(j)}| \bigg\},$$ we have
$$
u_1(x, t) = u_2(x, t)= 0.
$$

We now estimate $u_3(x, t)$.
Note that both $\check{G}(x, y_0, \cdot)$ and $\check{f}(\cdot)$ are analytic in the lower half of the complex plane. By applying the Residue theorem, we can obtain
\beas
u_{3}(x, t) &=& \int e^{-ikt} \check{u}_3(x, k) dk \\
&& =  \f{(c_{\Lambda}\e)^{\f{3}{2}}}{\sqrt{|D|}} \sum_{j=1}^M \left( e^{-ik_{0, \e, j, 2}t} \check{f}(k_{0, \e, j, 2})\zeta_j(k_{0, \e, j, 2}) -e^{-ik_{0, \e, j, 1}t} \check{f}(k_{0, \e, j, 1})\zeta_j(k_{0, \e, j, 1})\right)
\eeas

Since $\Im{k_{0, \e, j, 2} } = \Im{k_{0, \e, j, 2} } = O(\e^2)$, we can derive that
$$
|u_3(x, t)| = O(e^{-\e^2 t}) ,\quad t>0.
$$

Similarly, we can show that
$$
|u_4(x, t)| = O(e^{-\e^2 t}) ,\quad t>0.
$$

We assume that the source location $x_0$ and imaging point $x$ are not too far away from the apertures of the resonators. Then the fields $u_1$ and $u_2$ vanishes for large time $T$.  Thus we can conclude that
\be  \label{esti-decay1}
|u(x, t)| = O(e^{-\e^2 t}) ,\quad t>0.
\ee

In a similar way, we can derive that
\be  \label{esti-decay2}
|u_{t}(x, t)| = O(e^{-\e^2 t}) ,\quad t>0.
\ee

We remark that the estimate (\ref{esti-decay1}) and (\ref{esti-decay1}) for the the wave field $u(x, t)$ have excluded contributions from those outside the quasi-stationary regime, which we assume to be negligible.

We next investigate the term
\beas
\Theta(x,t)= \int_{\Omega}d y\left(u_t(y, T) G_{\e}(y, x, t)  -\f{\p G_{\e}}{\p t}(y, x, t)u(y, T)\right).
\eeas

It is clear that $\Theta$ satisfies the following wave equation
\beas
\f{\p^2 \Theta}{\p t^2}(x,t) - \Delta \Theta (x,t) = u_t(y, T) \delta(t) - u(y, T) \delta'(t), \quad (x, t)\in \Om_{\e} \times \R,
\eeas
which is equivalent to
\beas
\f{\p^2 \Theta}{\p t^2}(x,t) - \Delta \Theta (x,t) &=& 0, \quad (x, t)\in \Om_{\e} \times (0, \infty),\\
\Theta (x,0)&=& -u(x, T), \quad x \in \Om_{\e}, \\
\f{\p \Theta}{\p t}(x,0) &= & u_t(x, T), \quad x \in \Om_{\e}. \\
\eeas

By the local energy-decaying property of wave fields and the estimate (\ref{esti-decay1}) and (\ref{esti-decay1}), we may that argue
$$
| \Theta(x, t)| \lesssim  O(e^{-\e^2 T}), \quad t >0.
$$

Finally, by the analysis above, we obtain the following criterion for the recording time $T$ for the time-reversal experiment
\be
T \gg  \e^{-2}.
\ee

\end{document}